\newtheorem{theorem}{Theorem}[section]
\newtheorem{proposition}[theorem]{Proposition}
\newtheorem{lemma}[theorem]{Lemma}
\newtheorem{corollary}[theorem]{Corollary}
\newtheorem{definition}[theorem]{Definition}
\newtheorem{remark}[theorem]{Remark}
\title{Herz-Schur Multipliers of Fell Bundles}
\author{Weijiao He}
\begin{document}

\maketitle

\begin{abstract}
In this paper we develop the notion of Herz-Schur multipliers to the context of Fell bundle, and we give a necessary and sufficient condition if the reduced cross-sectional $C^{\ast}$-algebra $C_r^{\ast}(\mathfrak{B})$ of Fell bundle $\mathfrak{B}$ over discrete group $G$ is nuclear in terms of this generalized notion. As an application, we prove that if $C_r^{\ast}(\mathfrak{B})$ is nuclear, then for any subgroup $H \subset G$, the $C^{\ast}$-algebra $C_r^{\ast}(\mathfrak{B}_H)$ is nuclear.
\end{abstract}

\section{Introduction}

The notion of a Schur multiplier has its origins in the work of I. Schur in the early $\rm{20^{th}}$ century, and is based on the entry-wise (or Hadamard) product of matrices. More specifically, a bounded function $\varphi: \mathbb{N} \times \mathbb{N} \to \mathbb{C}$ is called a Schur multiplier if $(\varphi(i,j)a_{i,j})$ is the matrix of a bounded linear operator on $\ell^2$ whenever $(a_{i,j})$ is such. Hence to a Schur multiplier $\varphi: \mathbb{N} \times \mathbb{N} \to \mathbb{C}$ we can associate an operator $S_{\varphi}$ on $\mathcal{B}(\ell^2)$. Based on a concrete description of Schur multipliers which was given by A. Grothenieck in \cite{Classic}, Schur multipliers can be identified with $\ell^{\infty} \otimes_{eh} \ell^{\infty}$, the extended Haagerup tensor product of two copies of $\ell^{\infty}$.

Among the large number of applications of Schur multipliers is the description
of the space $MA^{cb}(G)$ of completely bounded multipliers, also
known as Herz-Schur multipliers of the Fourier algebra $A(G)$ of a locally
compact group G, introduced by J. de Canniere and U. Haagerup in \cite{M017}.
Namely, as shown by M. Bozejko and G. Fendler \cite{M010}, $M^{cb}A(G)$ can be
isometrically identified with the space of all Schur multipliers on $G \times G$ of
Toeplitz type. Furthermore,  in connection with Schur multiplier, Herz-Schur multiplier has very important application in the study of the nuclearity of the group $C^{\ast}$-algebra, i.e for a discrete group $G$, the reduced group $C^{\ast}$-algebra $C_r^{\ast}(G)$ is nuclear if and only if there is a net of completely positive Herz-Schur multiplier $\varphi_i: G \to \mathbb{C}$ such that $\varphi_i(x) \to 1$ for all $x \in G$ (see e.g \cite{approximation}). 

Recently, in \cite{MTT16}, McKee, Todorov and Turowska generalized the notion of Schur multipliers and Herz-Schur multipliers to the $C^{\ast}$-algebra valued case: a class of \emph{Schur A-multipliers} and a class of \emph{Herz-Schur multiplier of semi-direct product bundle} are identified, where $A$ is a $C^{\ast}$-algebra faithfully represented on a Hilbert space $H$. In this `operator-valued' case, the starting point is a function $\phi$ defined on the direct product $X \times Y$, where $X$ and $Y$ are standard measure space, and taking values in the space $CB(A, \mathcal{B}(H))$ of all completely bounded maps from $A$ into the $C^{\ast}$-algebra $\mathcal{O}(H)$ of all bounded linear operators on $H$. The associated operator $S_{\phi}$ acts from $\mathcal{O}_c(L_2(X), L_2(Y)) \otimes A$ into $\mathcal{O}_c(L_2(X), L_2(Y)) \otimes \mathcal{O}(H)$; the function $\phi$ is called a \emph{Schur A-multiplier} if the map $S_{\phi}$ is completely bounded. Similarly, for semi-direct bundle $A \underset{\tau}{\times} G$, we start from a function $\varphi$ from $G$ into $CB(A,A)$, the associated operator $S_{\varphi}$ acts from $\mathfrak{L}_2(A \underset{\tau}{\times}G)$ (the square-integrable cross sections of $A \underset{\tau}{\times}G$) to itself; the map $\varphi$ is called Herz-Schur multiplier of $A \underset{\tau}{\times}G$ if $S_{\varphi}$ is completely bounded with respect to the norm of $C_r^{\ast}(A \underset{\tau}{\times}G)$. In \cite{MR3860401}, McKee, Skalski, Todorov and Turowska use these generalized notions to generalize the classical result we mentioned in the last paragraph: for semi-direct product bundle $A \underset{\tau}{\times}G$ over discrete group $G$, $C_r^{\ast}(A \underset{\tau}{\times}G)$ is nuclear $C^{\ast}$-algebra if and only if there is a net completely positive Herz-Schur multiplier of $A \underset{\tau}{\times}G$ such that $\varphi_i(x)(a) \to a$ for all $x \in G$ and $a \in A$.

In this paper, we will generalize the notion of Herz-Schur multiplier to Fell bundle $\mathfrak{B}$ over locally compact group by borrowing the ideas of \cite{MTT16} and \cite{MR3860401}, then we will give a necessary and sufficient condition of that $C_r^{\ast}(\mathfrak{B})$ is nuclear $C^{\ast}$-algebra in terms of multipliers when $G$ is a discrete group, finally as an application we prove: If $\mathfrak{B}$ is a Fell bundle over discrete group $G$ such that $C_r^{\ast}(\mathfrak{B})$ is nuclear, then for any subgroup $H \subset G$ the $C^{\ast}$-algebra $C_r^{\ast}(\mathfrak{B})$ is nuclear as well.

The plan of this paper is: In the Section 2, we give some notations and conventions which we will use in this paper; in the Section 3, we define Schur multipliers of Fell bundle; in Section 4, we give a characterization of Schur multipliers of Fell bundle which is analogous to \cite[Theorem 2.6]{MTT16}, and during this process we include the non-unital version Stinpring's Theorem in a proposition which will be very important in Section 6 but with an easier proof; in Section 5 we study the generalization of Herz-Schur multipliers in the context of Fell bundles, including the generalized transference theorem between Schur multipliers and Herz-Schur multipliers (see e.g \cite[Theorem 3.8]{MTT16}); finally in Section 6 we study the problem concerning the nuclearity of the reduced $C^{\ast}$-algebra by aid of the notion of the generalized Herz-Schur multipliers.

\section{Preliminary}

We begin in this section with some conventions which will be assumed without reference. First of all, we refer the reader to \cite[II.13]{MR936628} for the notion of Banach bundle, \cite[VIII.2, VIII.3, VIII.16]{MR936629} for the notion of $C^{\ast}$-algebraic bundle, to \cite[VIII.9]{MR936629} for the basic knowledge about the representation theory of $C^{\ast}$-algebraic bundle. As usual, we call $C^{\ast}$-algebraic bundle Fell bundle. 

For Banach bundle $\mathfrak{D}$ over locally compact Hausdorff space $M$ with a fixed Borel measure, we use symbol $\mathfrak{L}(\mathfrak{D})$ to denote the space of continuous cross-section vanishing outside some compact subset of $M$, use symbol $\mathfrak{L}_p(\mathfrak{D})$ to denote the space of $p$-integrable cross-section of $\mathfrak{B}$ (see \cite[II.15]{MR936628}).

In this paper, we assume that $G$ is a fixed group which is either discrete with counting measure $\mu$ or is locally compact and second countable with fixed Haar measure $\mu$, $\mathfrak{B}=\langle B, \pi, \cdot, \ast \rangle$ be Fell bundle over $G$ such that each fiber space $B_x$ is separable when $G$ is not discrete. We use symbol $\mathfrak{B} \times \mathfrak{B}$ to denote the Banach bundle over $G \times G$ which is the retraction from $\mathfrak{B}$ by $\phi: G \times G \to G,  (x,y) \mapsto xy^{-1}$ and we denote its bundle space by $D$. If $\tau$ is a $\ast$-representation of $\mathfrak{B}$, we use symbol $\tau_{\rm{int}}$ to denote the integrated form of $\tau$. We use symbol $\mathfrak{}$

Sometimes we explicitly stated that $G$ is discrete or not, but in general we use unified notion to describe the both cases. For instance, for a function $f: G \to X$ into locally convex space $X$, we use symbol $\int_G f(x) d \mu (x)$ to denote integration or summation if $G$ is either non-discrete or is discrete.

\begin{remark}\label{cvnhjfuk}
In our context if $G$ is not discrete then by \cite[Proposition II.13.21]{MR936628} the bundle space $B$, and so $D$ are both second countable.
\end{remark}
\bigskip

We give some conventions which will be assumed in the rest of this thesis without reference.

\bigskip

The following lemma is trivial, we list it here for reference:

\begin{lemma}\label{hfdiosd}
Let  $\mathfrak{D}$ be a Banach bundle with bundle space $B'$ over locally compact Hausdorff space $N$, let $\mathfrak{C}$ be the Banach bundle over locally compact Hausdorff space $M$ which is the retraction of $\mathfrak{D}$ by a continuous map $\phi: M \to N$, and we denote its bundle space by $Z$. If $f: M \to B'$ is a map such that $f(m) \in B'_{\phi(m)}$ for all $m \in M$, then  the map $\widetilde{f}: M \to Z$ defined by
\begin{equation*}
\widetilde{f}(m)=(m,f(m)) \in Z_m \ \ \ \ (m \in M)
\end{equation*}
is a cross-sectional $\mathfrak{C}$, and that $f$ is continuous if and only if $\widetilde{f}$ is continuous. We call $\widetilde{f}$ the cross-section of $\mathfrak{C}$ by canonical identification of $f$.
\end{lemma}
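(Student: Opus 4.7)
The statement has two assertions: (i) $\widetilde{f}$ is a cross-section of $\mathfrak{C}$, and (ii) $f$ is continuous iff $\widetilde{f}$ is continuous. The first is essentially tautological from the construction, and the second is a straightforward application of the product/subspace topology on the retracted bundle space. My plan would be to spell out each in turn.

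For (i), I would simply note that by the construction of the retraction (see \cite[II.13.9]{MR936628}), the fiber $Z_m$ of $\mathfrak{C}$ over $m \in M$ is canonically identified with $\{m\} \times B'_{\phi(m)}$, and the bundle projection $\rho : Z \to M$ sends $(m,b) \mapsto m$. Since $f(m) \in B'_{\phi(m)}$ by hypothesis, $\widetilde{f}(m) = (m, f(m))$ indeed lies in $Z_m$, and $\rho \circ \widetilde{f} = \mathrm{id}_M$, so $\widetilde{f}$ is a cross-section.

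For (ii), I would recall that the topology on $Z$ is the relative topology inherited from $M \times B'$ equipped with the product topology, and that under this topology the coordinate maps $\rho : Z \to M$, $(m,b)\mapsto m$ and $\sigma : Z \to B'$, $(m,b) \mapsto b$ are both continuous. Assuming $f$ is continuous, the map $m \mapsto (m, f(m))$ from $M$ to $M \times B'$ is continuous by the universal property of the product topology; since its image lies in $Z$, the corestriction $\widetilde{f}:M\to Z$ is continuous as well. Conversely, if $\widetilde{f}$ is continuous, then $f = \sigma \circ \widetilde{f}$ is continuous as a composition of continuous maps.

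I do not expect any genuine obstacle here; the whole content of the lemma is to formalize the obvious identification of cross-sections of a retracted bundle with fiberwise lifts of the base map, and to confirm that the continuity notions on the two sides coincide. The only point that requires a nod to the reference is the description of the topology on $Z$ as the relative product topology, which is the definition given in \cite[II.13.9]{MR936628}.
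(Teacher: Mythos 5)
Your proof is correct. The paper itself offers no proof of this lemma (it is declared trivial and listed only for reference), and your argument — identifying $Z_m$ with $\{m\}\times B'_{\phi(m)}$, using that the topology on $Z$ is the relative product topology from $M\times B'$, and then invoking the universal property of the product together with the continuity of the coordinate projection $\sigma:(m,b)\mapsto b$ — is exactly the routine verification the paper leaves implicit.
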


In the following, we will identify $f$  and its canonical identification $\widetilde{f}$, i.e if $f: M \to B'$ is a (continuous) map such that $f(m) \in B'_{\phi(m)}$, we will regard that $f$ is (continuous) cross-section of $\mathfrak{C}$.

Therefore for any continuous map $k: G \times G \to B$  with $k(s,t) \in B_{st^{-1}}$, we could regard $k$ as a continuous cross-section of $\mathfrak{B} \times \mathfrak{B}$. 
\bigskip

Recall that the bundle space $D$ of $\mathfrak{B} \times \mathfrak{B}$ is a (topological) subspace of $G \times G \times B$, i.e
\begin{equation*}
D=\{(s,t; a): s,t \in G,\  a \in B_{st^{-1}}\}.
\end{equation*}
and each fiber space $D_{s,t}=\{(s,t; a): a \in B_{st^{-1}}\}$ of $\mathfrak{B} \times \mathfrak{B}$ is isometric to $B_{st^{-1}}$ $(s,t \in G)$. Therefore, we have the following lemma which is easy consequence of the definition of the topology of $D$.

Let denote the subspace  $\{[(s,t; a), (t,r; b)]: s,t,r \in G\}$ of $D \times D$ by $\mathcal{Z}$,  we define  $\diamond: \mathcal{Z} \to D$ by  (notice that $ab \in B_{sr^{-1}}$ for $a \in B_{st^{-1}}$ and $b \in B_{tr^{-1}}$)
\begin{equation}\label{19051301}
(s,t; a) \diamond (t,r; b)=(s,r; ab) \in D_{s,r} 
\end{equation}
 furthermore define $\ast: D \to D$ by (here notice that $a^{\ast} \in B_{ts^{-1}}$ for $a \in B_{st^{-1}}$)
\begin{equation}\label{19051302}
(s,t; a)^{\ast}=(t,s; a^{\ast}) \in D_{t,s}.
\end{equation}
since the multiplication and involution are continuous in $B$, of course both $\diamond$ and $\ast$ are continuous.

Let $\rho : B \to  \mathcal{O}(X)$ be any fixed non-degenerate $\ast$-representation on Hilbert space $X$ such that $\rho|_{B_e}$ is faithful. We define $\rho_D: D \to \mathcal{O}(X(\rho))$ by 
\begin{equation}\label{hvcdffiiul}
\rho_D((s,t; a)):=\rho(a) \ \ \ \  ((s,t; a) \in D),
\end{equation}
 we have
\begin{equation}\begin{split}\label{19051303}
\rho_D((s,t; a) \diamond  (t,r; b))=\rho_D((s,t; a)) \rho_D((t,r; b)) \ \ \ \ ((s,t; a), (t,r; b) \in D)
\end{split}\end{equation}
and
\begin{equation}\begin{split}\label{19051304}
\rho_D((s,t; a))^{\ast})=\rho_D((s,t; a))^{\ast} \ \ \ \ (s,t \in G; \ (s,t; a) \in D_{s,t}).
\end{split}\end{equation}
Furthermore, since $\rho: B \to \mathcal{O}(X(\rho))$ is strong operator continuous,  for any $k \in \mathfrak{L} (\mathfrak{B} \times \mathfrak{B} )$ the map $(s,t) \mapsto \rho_D(k(s,t))$ is strong-operator continuous map from $G \times G$ into $\mathcal{O}(X)$.

\bigskip
Recall that the bundle space $B$ of $\mathfrak{B}$ could be identified with a subset of $M(C^{\ast}(\mathfrak{B}))$ such that the topology of $B$ is stronger than the relativized topology of $B$ defined by the strict topology $M(C^{\ast}(\mathfrak{B}))$ (see \cite{MR936629} and \cite{MR1891686}). Thus for $a \in B$ and $b \in C^{\ast}(\mathfrak{B})$, we have multiplications $ab$ and $ba$ in $M(C^{\ast}(\mathfrak{B}))$ such that $ab \in C^{\ast}(\mathfrak{B})$ and $ba \in C^{\ast}(\mathfrak{B})$. We define $\star: D \times C^{\ast}(\mathfrak{B}) \to C^{\ast}(\mathfrak{B})$ by
\begin{equation*}
(s,t; a) \star b=ab  \ \ \ \ ((s,t; a) \in D, b \in C^{\ast}(\mathfrak{B})).
\end{equation*}
For the sake of convenience, we use the same use the same symbol $\star$ to denote the map from $C^{\ast}(\mathfrak{B}) \times D$ to $C^{\ast}(\mathfrak{B})$ defined by
\begin{equation*}
b \star (s,t; a)=b a  \ \ \ \ ((s,t; a) \in D, b \in C^{\ast}(\mathfrak{B})).
\end{equation*}
 Thus we have
 \begin{equation*}\begin{split}
 \rho_{\rm{int}}((s,t; a) \star b=\rho_D((s,t; a)) \rho_{\rm{int}}(b) \ \ \ \ ((s,t; a) \in D; b \in C^{\ast}(\mathfrak{B} )),
\end{split}\end{equation*}
similarly we have
\begin{equation*}
 \rho_{\rm{int}}(b \star (s,t; a))= \rho_{\rm{int}}(b)\rho_D((s,t; a))  \ \  ((s,t; a) \in D; b \in C^{\ast}(\mathfrak{B} )).
\end{equation*}
If $k \in \mathfrak{L}(\mathfrak{B} \times \mathfrak{B})$ and $b \in C^{\ast}(\mathfrak{B})$, then the maps $(s,t) \mapsto k(s,t) \star b$ and $(s,t) \mapsto b \star k(s,t) $ are continuous from $G \times G$ into $C^{\ast}(\mathfrak{B})$.

\bigskip
Our final remark is about the integration theory of Banach bundles. Let $\mathfrak{C}$ be an arbitrary Banach bundle over locally compact Hausdorff space $M$ with Borel measure $\nu$. We use symbol
 $\mathfrak{R}(\mathfrak{C})$ to denote the set of the linear span of 
 \begin{equation*}
 \{\chi_W k':\ k' \in  \ \mathfrak{L}(\mathfrak{C})\ {\rm{and}} \ W {\rm{ \ is \ compact \ subset \ of }} \ M \}.
 \end{equation*}
By \cite[Chapter II]{MR936629}, for any $k \in \mathfrak{L}_p(\mathfrak{C})$, there is a sequence $\{k_n\}_{n \in \mathbb{N}} \subset \mathfrak{R}(\mathfrak{C})$ such that
\begin{equation*}
\|k_n(x)-k(x)\| \to 0 \ \ \ \ ({\rm{provided}} \ n \to \infty)
\end{equation*}
and
\begin{equation*}
\|k_n(x)\| \leq \|k(x)\| \ \ \ \ (n \in \mathbb{N})
\end{equation*}
for almost $x \in M$. In particular since $\|k_n(x)-k(x)\|^p \leq (\|k_n(x)\|+\|k(x)\|)^{p}$ for almost $x \in M$ and that $x \mapsto  (\|k_n(x)\|+\|k(x)\|)^{p}$ is integrable, apply Lebesgue's Dominated Theorem we have
\begin{equation*}
\int_G \|k_n(x)-k(x)\|^p d \nu x \to 0.
\end{equation*}

\section{Schur multipliers of Fell bundles }

In this section we define the notion of Schur multiplier of Fell bundles.

\begin{proposition}\label{fhjsdvnc}
Let $k \in \mathfrak{L}_2 (\mathfrak{B} \times \mathfrak{B})$. For arbitrary $\xi \in \mathfrak{L}_2(G, X)$ the map
 \begin{equation*}
P^{x}_{k, \xi}: G \to X, \ y \mapsto \rho_D(k(x,y))(\xi(y))
 \end{equation*}
 is integrable for almost $x \in G$, and the following map
 \begin{equation*}
Q_{k, \xi}: G \to X, \  x \mapsto \int_G \rho_D(k(x,y))(\xi(y)) dy
 \end{equation*}
 is in $\mathfrak{L}_2(G, X)$.
Therefore, we can associate an operator $T_{k}^{\rho}$ on Hilbert space $\mathfrak{L}_2(G, X)$, defined by
\begin{equation}\label{definition of the operator}
T_k^{\rho}(\xi)(x)= \int_G \rho_D(k(x,y))(\xi(y)) dy \ \ \ \ (\xi \in \mathfrak{L}_2(G, X)),
\end{equation}
and we have $\|T_k^{\rho}\| \leq \|k\|_2$. 

\end{proposition}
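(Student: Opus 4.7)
The plan is to recognize this as a Banach-bundle analogue of the classical Hilbert--Schmidt operator estimate, with the $C^{\ast}$-representation $\rho$ supplying the pointwise contractive bound that lets the scalar argument carry through. The essential inequality I will aim to prove is
\begin{equation*}
\|Q_{k,\xi}(x)\| \;\leq\; \Bigl(\int_G \|k(x,y)\|^2 \, d\mu(y)\Bigr)^{1/2} \|\xi\|_2
\end{equation*}
for almost every $x$; integrating the square of this in $x$ and invoking Fubini on $\|k\|_2^2 = \int_G \int_G \|k(x,y)\|^2 \, d\mu(y) \, d\mu(x)$ immediately yields $\|T_k^\rho(\xi)\|_2 \leq \|k\|_2 \|\xi\|_2$, hence $\|T_k^\rho\| \leq \|k\|_2$.

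First I would record the pointwise bound $\|\rho_D(k(x,y))(\xi(y))\| \leq \|k(x,y)\|\,\|\xi(y)\|$, which follows because $\rho$ is a $\ast$-representation of the Fell bundle and is therefore contractive on every fiber $B_{xy^{-1}}$ (so $\|\rho_D((x,y;a))\| = \|\rho(a)\| \leq \|a\|$). By Cauchy--Schwarz in the $y$ variable this gives
\begin{equation*}
\int_G \|\rho_D(k(x,y))(\xi(y))\| \, d\mu(y) \;\leq\; \Bigl(\int_G \|k(x,y)\|^2 \, d\mu(y)\Bigr)^{1/2}\|\xi\|_2.
\end{equation*}
Since $k \in \mathfrak{L}_2(\mathfrak{B}\times\mathfrak{B})$, Tonelli's theorem forces $y \mapsto \|k(x,y)\|^2$ to be integrable for almost every $x$, so $P^x_{k,\xi}$ is absolutely integrable for such $x$.

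The step that needs care is measurability: I need that $y \mapsto \rho_D(k(x,y))(\xi(y))$ is a measurable $X$-valued function (so integrability makes sense in the Bochner sense), and that $x \mapsto Q_{k,\xi}(x)$ is measurable into $X$. For this I would use the approximation scheme recorded at the end of Section~2: choose a sequence $\{k_n\} \subset \mathfrak{R}(\mathfrak{B}\times\mathfrak{B})$ with $\|k_n(z)-k(z)\| \to 0$ and $\|k_n(z)\| \leq \|k(z)\|$ almost everywhere on $G\times G$, and similarly simple-section approximants of $\xi$. For elements of the form $\chi_W k'$ with $k' \in \mathfrak{L}(\mathfrak{B}\times\mathfrak{B})$, the strong-operator continuity of $\rho_D \circ k'$ noted just after \eqref{19051304} makes the composition with any continuous $X$-valued section of the pullback jointly continuous off a null set and hence measurable; multiplying by the characteristic function $\chi_W$ preserves measurability, and the limit in norm propagates measurability to the general case.

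Once measurability is in hand, I combine the two estimates: squaring the pointwise bound on $Q_{k,\xi}$ and integrating in $x$ gives
\begin{equation*}
\int_G \|Q_{k,\xi}(x)\|^2 \, d\mu(x) \;\leq\; \|\xi\|_2^2 \int_G \int_G \|k(x,y)\|^2 \, d\mu(y) \, d\mu(x) \;=\; \|k\|_2^2 \, \|\xi\|_2^2,
\end{equation*}
which simultaneously verifies that $Q_{k,\xi} \in \mathfrak{L}_2(G,X)$, that $T_k^\rho$ is a well-defined linear operator on this Hilbert space, and that its operator norm is bounded by $\|k\|_2$. The main obstacle is really the bookkeeping around measurability in the Bochner-style theory for Banach bundles; the analytic estimates themselves are straightforward Cauchy--Schwarz plus Fubini.
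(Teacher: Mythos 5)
Your proposal is correct and follows essentially the same route as the paper's proof: the pointwise contractivity of $\rho$ on each fiber together with Cauchy--Schwarz and Fubini/Tonelli give the quantitative estimates, and measurability is handled by the same approximation of $k$ by elements of $\mathfrak{R}(\mathfrak{B}\times\mathfrak{B})$ and of $\xi$ by simple functions. The paper only differs in spelling out in detail the null-set bookkeeping ($M$, $L$, $L'$) and the pointwise a.e.\ convergence $Q_{k_n,\xi_n}(x)\to Q_{k,\xi}(x)$ that you compress into ``the limit in norm propagates measurability.''
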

\begin{proof}

  Let $k \in \mathfrak{R}(\mathfrak{B} \times \mathfrak{B})$ and suppose $\xi \in \mathfrak{L}_2(G,X)$ has the form of $\xi=\sum_{i=1}^n \chi_{V_i}\xi'_i$ for some compact subsets $V_i \subset G$ and $\xi_i' \in X$. Then it is easy to verify that $P^x_{k,\xi}$  is integrable for almost $x \in G$. Furthermore, by Fubini's Theorem, $Q_{k, \xi}$ is measurable and is compactly supported,  and by
\begin{equation}\begin{split}\label{hfdiowerfuc}
\int_G \left\|Q_{k,\xi}(x)\right\|^2 dx \leq \|k\|_2^2 \|\xi\|_2^2,
\end{split}\end{equation}
we proved that $Q_{k,\xi}$ is in $\mathfrak{L}_2(G, X)$.

 Now let $k$ be an arbitrary element of $\mathfrak{L}_2(\mathfrak{B} \times \mathfrak{B})$ and $\xi$ an arbitrary element of $\mathfrak{L}_2(G,X)$. We prove that $P^x_{k,\xi}$ is integrable for almost $x \in G$. Let $\{\xi_n\}_{n \in \mathbb{N}}$ be sequence of simple functions of $\mathfrak{L}_2(G,X)$ such that $\|\xi_n(x)-\xi(x)\| \to 0$ for almost $x \in G$, and $\|\xi_n-\xi\|_2 \to 0$; on the other hand let $\{k_n\}_{n \in \mathbb{N}}$ be a sequence of $\mathfrak{R}(\mathfrak{B} \times \mathfrak{B})$ such that $\|k_n(x,y)\| \leq \|k(x,y)\|$ and $\|k_n(x,y)-k(x,y)\| \to 0$ in $D_{x,y}$ for almost $(x,y) \in G \times G$ (so automatically $\|k_n-k\|_2 \to 0$). Therefore, we have a null subset $N \subset G \times G$ such that
\begin{equation*}
 \|k_n(x,y)-k(x,y)\| \to 0 
\end{equation*}
and
\begin{equation*}
\|k_n(x,y)\| \leq \|k(x,y)\|
\end{equation*}
 if $(x,y) \in (G \times G)\setminus N$. Now let $N_x=\{y \in G: (x,y) \in N\}$, $M=\{x \in G: N_x {\rm{  \ is \ not \ null \ subset \ of \ }}  G\}$, then $M$ is null subset of $G$, and if $x \in G \setminus M $ we have
 \begin{equation*}
 \|\rho_D(k_n(x,y))(\xi_n(y))-\rho_D(k(x,y))(\xi(y))\| \to 0.
 \end{equation*}
for almost all $y \in G$.  There is a null set $L \subset G$ such that for all $n \in \mathbb{N}$ and $x \in G \setminus L$, the map $y \mapsto \rho_D(k_n(x,y))(\xi_n(y))$ is measurable and vanishes outside compact subsets, thus we conclude that if $x \in G \setminus (M \cup L)$,  the  map $y \mapsto \rho_D(k(x,y))(\xi(y))$ is measurable and vanishing outside countable union of compact subsets. Furthermore, there is null set $L' \subset G$ such that
\begin{equation*}
\int_G \|k(x,y)\|^2 dy < \infty
\end{equation*}
for $x \in G \setminus L'$, thus if $x \in G \setminus (M \cup L \cup L')$ we have
\begin{equation}\begin{split}\label{fhnjdslcfsdkcijlf}
 \int_{G}\|\rho_D(k(x,y))(\xi(y))\| dy\leq \left(\int_G \|\rho_D(k(x,y))\|^2 dy \right)^{1/2} \cdot \|\xi\|_2 < \infty.
\end{split}\end{equation}
 We proved that $P^x_{k, \xi}$ is integrable for almost $x \in G$ (i.e for $x \in G \setminus (M \cup L \cup L')$). 

Now we prove that $Q_{k,\xi}$ is in $\mathfrak{L}_2(G,X)$. Our first step is to verify that $Q_{k,\xi}$ is measurable. For $x \in G \setminus (M \cup L \cup L')$ we have
\begin{equation*}\begin{split}
\ \ \ \ \ \  &\left\|\int_G  (\rho_D(k(x,y))(\xi(y))-\rho_D(k_n(x,y))(\xi_n(y)) dy \right\|
\\& \leq \left(\int_G\|k_n(x,y)-k(x,y)\|^2 dy\right)^{1/2} \cdot \|\xi\|_2+ \left(\int_G\|k_n(x,y)\|^2 dy\right)^{1/2}  \cdot \|\xi-\xi_n\|_2.
\end{split}\end{equation*}
Let us estimate the value of $\int_G \|k_n(x,y)-k(x,y)\|^2 dy$ if $n \to \infty$.  Recall our definitions on null subsets $L, M, L' \subset G $: if $x \in G \setminus (M \cup L)$, then for almost $y \in G$ we have $\|k_n(x,y)-k(x,y)\| \to 0$ provided $n \to \infty$ and $\|k_n(x,y)\| \leq \|k(x,y)\|$ for all $n \in \mathbb{N}$; on the other hand, if $x \in G \setminus L'$ we have $\int_G \|k(x,y)\|^2 dy < \infty$; hence if $x \in G \setminus (M \cup L \cup L')$ we have $\int_G\|k_n(x,y)-k(x,y)\|^2 dy \to 0$. In particular, we can conclude that if $x \in G \setminus (M \cup L \cup L')$ the sequence $\{\int_G \|k_n(x,y)\|^2 dy\}_{n \in \mathbb{N}}$ is bounded. Therefore we have
\begin{equation*}
\left\|\int_G  (\rho_D(k(x,y))(\xi(y))-\rho_D(k_n(x,y))(\xi_n(y)) dy \right\| \to 0
\end{equation*}                 
for $x \in G \setminus (M \cup L \cup L')$. This proved that $Q_{k, \xi}$ is measurable, and it is clear that $Q_{k, \xi}$ is vanishing outside countable union of compact subsets of $G$. Furthermore, by the same argument which derived (\ref{hfdiowerfuc}) we have
\begin{equation*}
\int_G \left\|Q_{k,\xi}(x)\right\|^2 dx \leq  \|k\|_2^2 \|\xi\|_2^2,
\end{equation*}
thus $Q_{k,\xi}$ is in $\mathfrak{L}_2(G,X)$.

The other parts of this proposition is easy consequence of our previous discussion. 
\end{proof}

We use the symbol $\mathfrak{E}(\rho, \mathfrak{B} )$ to denote the norm-closure of the set $\{T_k^{\rho}: k \in \mathfrak{L}_2(\mathfrak{B})\}$ in $\mathcal{O}(\mathfrak{L}_2(G,X))$. 

\begin{lemma}\label{heisalgebra}
If $k_1, k_2 \in \mathfrak{L}(\mathfrak{B} \times \mathfrak{B})$,  the map 
 \begin{equation*}
 G \to D_{x,y}, z \mapsto k_1(x,z) \diamond k_2(z,y)
 \end{equation*}
 is continuous and compactly supported for all $(x,y) \in G \times G$, and the map  
 \begin{equation*}
J: G \times G \to D_{x,y},  (x,y) \mapsto \int_G  k_1(x,z) \diamond k_2(z,y)dz
\end{equation*}  
 is in $\mathfrak{L}(\mathfrak{B} \times \mathfrak{B})$.
\end{lemma}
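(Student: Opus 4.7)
My plan is to treat the two claims of the lemma separately, using the canonical identification of Lemma~\ref{hfdiosd} to regard $J$ and the $z$-integrands as $B$-valued maps.

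For the first claim, fix $(x,y) \in G \times G$. Since $k_1$ and $k_2$ are continuous, so are $z \mapsto k_1(x,z)$ and $z \mapsto k_2(z,y)$ as maps $G \to B$ with values in $B_{xz^{-1}}$ and $B_{zy^{-1}}$ respectively; under the canonical identification these lift to continuous maps $G \to D$ whose pair lies pointwise in $\mathcal{Z}$, so continuity of $\diamond$ (noted just after~(\ref{19051302})) yields continuity of $z \mapsto k_1(x,z) \diamond k_2(z,y)$. Writing $K_i := \mathrm{supp}(k_i) \subset G \times G$, the integrand vanishes unless $(x,z) \in K_1$ and $(z,y) \in K_2$, so its $z$-support is contained in the compact set $\pi_2\bigl(K_1 \cap (\{x\}\times G)\bigr) \cap \pi_1\bigl(K_2 \cap (G \times \{y\})\bigr)$.

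Turning to $J$, the inclusion $J(x,y) \in D_{x,y}$ is automatic since the integrand lies pointwise in the fixed Banach space $B_{xy^{-1}}$. For the compact support of $J$: if $J(x,y) \neq 0$ there must exist $z$ with $(x,z) \in K_1$ and $(z,y) \in K_2$, so $\mathrm{supp}(J)$ lies in the image of the closed subset $S := \{(x,z,y) \in G^3 : (x,z) \in K_1,\ (z,y) \in K_2\}$ under the projection to $(x,y)$. Since $S$ is contained in the compact box $\pi_1(K_1) \times \bigl(\pi_2(K_1) \cap \pi_1(K_2)\bigr) \times \pi_2(K_2)$, it is itself compact, and its continuous image in $G \times G$ is compact.

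The main obstacle is the continuity of $J$ as a cross-section of $\mathfrak{B} \times \mathfrak{B}$. The crucial observation is that, uniformly over $(x,y) \in G \times G$, the $z$-support of the integrand lies in the fixed compact set $K := \pi_2(K_1) \cap \pi_1(K_2)$, and the map $F: G^3 \to B$ defined by $F(x,y,z) := k_1(x,z) k_2(z,y)$ is jointly continuous with support contained in a compact subset of $G^3$ (by continuity of $k_1, k_2$ and of the bundle multiplication). Fixing $(x_0, y_0) \in G \times G$, I would use the Banach bundle axioms to pick a continuous section $\sigma$ of $\mathfrak{B}$ with $\sigma(x_0 y_0^{-1}) = J(x_0, y_0)$; then by the standard characterisation of the topology of $B$, continuity of $J$ at $(x_0, y_0)$ reduces to the automatic convergence $xy^{-1} \to x_0 y_0^{-1}$ together with $\|J(x,y) - \sigma(xy^{-1})\| \to 0$. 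The latter I would establish by a continuity-under-the-integral-sign argument: on a compact neighbourhood $\overline{U}$ of $(x_0, y_0)$, using uniform continuity of $F$ on $\overline{U} \times K$ in the bundle topology and comparing with an auxiliary continuous cross-section $\tau(x,y,z)$ chosen so that $\int_G \tau(x,y,z)\,dz = \sigma(xy^{-1})$, followed by dominated convergence for the Banach-bundle integral. Once $J$ is continuous and compactly supported, Lemma~\ref{hfdiosd} packages it as an element of $\mathfrak{L}(\mathfrak{B} \times \mathfrak{B})$.
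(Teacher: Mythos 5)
Your argument is correct in outline and its first half coincides with the paper's: both reduce the pointwise continuity and compact support of $z \mapsto k_1(x,z) \diamond k_2(z,y)$ to the continuity of the jointly defined map $(x,z,y) \mapsto k_1(x,z)\diamond k_2(z,y)$ (the paper writes this as $p(x,z,y)=(x,y;c_{k_1}(x,z)c_{k_2}(z,y))$) together with the continuity of $\diamond$ and the compactness of the supports. The divergence is in the continuity of $J$: the paper disposes of it in one line by citing \cite[II.15.19]{MR936628}, the Fell--Doran result that a parametrized integral of a continuous, compactly supported Banach-bundle-valued function is a continuous cross-section, whereas you sketch a direct proof of essentially that same result (fix $(x_0,y_0)$, compare $J$ with a continuous section $\sigma$ through $J(x_0,y_0)$, and control $\|J(x,y)-\sigma(xy^{-1})\|$ by uniform estimates over the fixed compact $z$-support). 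What the citation buys is precisely the step you leave as a plan: the phrase ``uniform continuity of $F$ on $\overline{U}\times K$ in the bundle topology'' is not meaningful as stated, since the values $F(x,y,z)$ and $F(x_0,y_0,z)$ lie in different fibers; making it precise requires comparing $F$ against finitely many continuous cross-sections of $\mathfrak{B}$ and estimating norms of differences within each fiber, which is exactly how the cited lemma is proved. Your construction of the auxiliary section $\tau$ with $\int_G \tau(x,y,z)\,dz=\sigma(xy^{-1})$ would also need to be spelled out. So your route is more self-contained but carries the burden of reproving a standard integration lemma; the remaining pieces of your argument (the compactness of $S$ and of its projection, and the identification of $J$ as a cross-section via Lemma~\ref{hfdiosd}) are correct and match the paper.
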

\begin{proof}
Since $k_1$ and $k_2$ are continuous cross-section, then the map $p: (x,z,y) \mapsto (x,y; c_{k_1}(x,z) c_{k_2}(z,y))$ is continuous from $G \times G \times G$ into $D$, where $c_{k_i}: G \times G \to B$ satisfies $k_i(x,y)=(x,y, c_{k_i}(x,y))$, and is compactly supported, and we have
\begin{equation*}
k_1(x,z) \diamond k_2(z,y)=p(x,y,z),
\end{equation*}
hence the continuity of $z \mapsto k_1(x,z) \diamond k_2(z,y)$ is from the continuity of $p$. Furthermore, we have
\begin{equation*}\begin{split}
\int_G p(x,z,y) dz  &= \int_G (x,y; c_{k_1}(x,z) c_{k_2}(z,y)) dz
\\&= \int_G  k_1(x,z) \diamond k_2(z,y) dz,
\end{split}\end{equation*}
therefore it is sufficient to prove that $(x,y) \mapsto \int_G p(x,z,y) dz$ is continuous from $G \times G$ into $D$. By \cite[II.15.19]{MR936628}, the map
\begin{equation}
(x,y) \mapsto \int_G p(x,z,y) dz
\end{equation}
is continuous cross-section of $\mathfrak{B} \times \mathfrak{B}$. Our proof is complete.
\end{proof}

\begin{definition}
For $k_1, k_2 \in \mathfrak{L}(\mathfrak{B} \times \mathfrak{B})$, we use symbol $k_1 \star k_2 $ to denote the cross-section of $\mathfrak{B} \times \mathfrak{B}$
\begin{equation*}
(x,y) \mapsto \int_G  k_1(x,z) \diamond k_2(z,y)dz \ (\in B_{xy^{-1}}),
\end{equation*}
$(recall \ that \ k_1 \star k_2 \in \mathfrak{L}(\mathfrak{B} \times \mathfrak{B}))$ and use the symbol $k_1^{\ast}$ to denote the cross-section defined by
\begin{equation*}
(x,y) \mapsto k_1(y,x)^{\ast}\ \ (\in B_{xy^{-1}})
\end{equation*}
\end{definition}

The following lemma is verified by routine computation:
\begin{lemma}
For any $k_1, k_2 \in \mathfrak{L}(\mathfrak{B} \times \mathfrak{B})$, we have
\begin{equation*}\begin{split}
&T^{\rho}_{k_1 \star k_2}=T^{\rho}_{k_1} T^{\rho}_{k_2} ,
\\&( T_{k_1})^{\ast}=T^{\rho}_{k_1^{\ast}}.
\end{split}\end{equation*}
\end{lemma}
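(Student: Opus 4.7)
The plan is to verify both identities by direct computation, reducing them to the definition of $T^{\rho}_k$ in \eqref{definition of the operator} together with the multiplicativity relation \eqref{19051303} and the adjoint relation \eqref{19051304} of $\rho_D$.

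For the first identity, I would fix $\xi \in \mathfrak{L}_2(G,X)$ and unfold
\begin{equation*}
T^{\rho}_{k_1 \star k_2}(\xi)(x)=\int_G \rho_D\!\left(\int_G k_1(x,z)\diamond k_2(z,y)\,dz\right)(\xi(y))\,dy.
\end{equation*}
Since $\rho_D$ restricted to each fiber $D_{x,y}$ is a bounded linear map into $\mathcal{O}(X)$ (it is just $\rho$ acting on $B_{xy^{-1}}$), it commutes with the Bochner integral on that fiber, and by \eqref{19051303} we get $\rho_D(k_1(x,z)\diamond k_2(z,y))=\rho_D(k_1(x,z))\rho_D(k_2(z,y))$. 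After these two steps the expression becomes
\begin{equation*}
\int_G\!\int_G \rho_D(k_1(x,z))\,\rho_D(k_2(z,y))(\xi(y))\,dz\,dy.
\end{equation*}
Applying Fubini's theorem, for which the integrability of $(z,y)\mapsto\|k_1(x,z)\|\,\|k_2(z,y)\|\,\|\xi(y)\|$ on the compact support of $(k_1,k_2)$ is immediate, I can switch the order of integration and recognize the inner integral as $T^{\rho}_{k_2}(\xi)(z)$, which yields $T^{\rho}_{k_1}(T^{\rho}_{k_2}(\xi))(x)$.

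For the second identity, I would check it weakly: pick $\xi,\eta \in \mathfrak{L}_2(G,X)$ and compute
\begin{equation*}
\ip{T^{\rho}_{k_1}\xi}{\eta}=\int_G\!\int_G \ip{\rho_D(k_1(x,y))\xi(y)}{\eta(x)}\,dy\,dx,
\end{equation*}
using Fubini on the jointly compactly supported integrand. On the other side, by \eqref{19051302}, the cross-section $k_1^{\ast}$ satisfies $\rho_D(k_1^{\ast}(y,x))=\rho_D(k_1(x,y))^{\ast}$, so
\begin{equation*}
\ip{\xi}{T^{\rho}_{k_1^{\ast}}\eta}=\int_G\!\int_G \ip{\xi(y)}{\rho_D(k_1(x,y))^{\ast}\eta(x)}\,dx\,dy,
\end{equation*}
and the two iterated integrals coincide after swapping the order.

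The only genuine obstacle is the analytic bookkeeping: verifying that $\rho_D$ can be pulled through the fiber-integral defining $k_1\star k_2$, and that Fubini applies in both computations. Both reduce to the fact that $k_1,k_2$ lie in $\mathfrak{L}(\mathfrak{B}\times\mathfrak{B})$, hence their relevant products have compact support in $G\times G\times G$, so all integrands are jointly integrable and the vector-valued integrals involved are Bochner integrals on which bounded linear maps and Fubini's theorem act in the standard way.
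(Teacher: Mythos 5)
Your proposal is correct and is precisely the ``routine computation'' the paper invokes without writing out: the paper offers no proof of this lemma beyond declaring it routine, and your unfolding of the definitions, the passage of $\rho_D$ through the fiberwise Bochner integral, the use of \eqref{19051303} and \eqref{19051304}, and the Fubini justifications via compact supports of $k_1,k_2$ are exactly the steps that verification requires. No gaps.
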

\bigskip
Therefore, $\mathfrak{E}(\rho, \mathfrak{B} )$ is a $C^{\ast}$-algebra.

\bigskip

Recall that $\rho$ is non-degenerate $\ast$-representation, then the concrete $C^{\ast}$-algebra $\mathcal{O}_c(\mathfrak{L}_2(G)) \otimes \rho_{\rm{int}}(C^{\ast}(\mathfrak{B} ))$ acts on $\mathfrak{L}_2(G,X)$ non-degenerately, and we have the inclusion $M(\mathcal{O}_c(\mathfrak{L}_2(G)) \otimes \rho_{\rm{int}}(C^{\ast}(\mathfrak{B} ))) \subset \mathcal{O}(\mathfrak{L}_2(G,X))$.  In the following, we prove that $\mathfrak{E}(\rho, \mathfrak{B} ) \subset M(\mathcal{O}_c(\mathfrak{L}_2(G)) \otimes \rho_{\rm{int}}(C^{\ast}(\mathfrak{B} )))$.

\bigskip

We denote the set of all simple functions $l \in \mathfrak{L}_2(G\times G, C^{\ast}(\mathfrak{B} ))$ of the form
\begin{equation}\label{standard simple}
l=\sum_{i=1}^n a_i \chi_{E_i \times F_i}, \ ( a_i \in C^{\ast}(\mathfrak{B} ), \ E_i ,F_i \subset G \mbox{ are  compact})
\end{equation}
by $\mathfrak{K}({\mathfrak{B}})$.
By \cite[II.9.2]{MR936628} and \cite[II.9.4]{MR936628}, the linear span of 
\begin{equation}
\{\chi_{E \times F}: E \ {\rm{and}} \ F \ {\rm{are}} \ {\rm{compact}}\}
\end{equation}
is dense in $\mathfrak{L}_2(G)$, then it is clear that $\mathfrak{K}(\mathfrak{B})$ is dense in $\mathfrak{L}_2(G \times G, C^{\ast}(\mathfrak{B}))$.

\bigskip
Recall from \cite{MTT16} that for any $k' \in \mathfrak{L}_2(G \times G, C^{\ast}(\mathfrak{B}))$, the operator $T_{k'}^{\rho_{\rm{int}}}$ on $\mathfrak{L}_2(G, X)$ defined by
\begin{equation*}
T_{k'}^{\rho_{\rm{int}}}(\xi)(x)=\int \rho_{\rm{int}}(k'(x,y)) (\xi(y)) dy \ \ \ \ (\xi \in \mathfrak{L}_2(G, X))
\end{equation*}
is in $\mathcal{O}_c(\mathfrak{L}_2(G)) \otimes \rho_{\rm{int}}(C^{\ast}(\mathfrak{B} ))$. 

\begin{lemma}\label{maintool}
Let $k \in \mathfrak{L}(\mathfrak{B} \times \mathfrak{B})$ and $l \in \mathfrak{K}({\mathfrak{B} })$.  The following maps from $G$ into $C^{\ast}(\mathfrak{B})$
\begin{equation*}\begin{split}
&  z \mapsto k(x,z) \star l(z,y),
\\&   z \mapsto    l(z,y) \star k(x,z) 
\end{split}\end{equation*}
are in $\mathfrak{L}_1(G, C^{\ast}(\mathfrak{B}))$ for almost $(x,y) \in G \times G$, and the maps $k \star l$ and $l \star k$ defined by 
\begin{equation*}\begin{split}
 k \star l(x,y)=\int_G k(x,z) \star l(z, y) dz \ \ \ \ (x,y \in G), &
\\ l \star k(x,y)=\int_G l(x,z) \star k(z, y) dz \ \ \ \ (x,y \in G)
\end{split}\end{equation*}
are in $\mathfrak{L}_2(G \times G, C^{\ast}(\mathfrak{B}))$. Furthermore we have
\begin{equation*}
T_k^{\rho} T_l^{\rho_{\rm{int}}}=T_{k \star l}^{\rho_{\rm{int}}};   T_l^{\rho_{\rm{int}}} T_k^{\rho} =T_{l \star k}^{\rho_{\rm{int}}},
\end{equation*}

In particular,  since $\{T_l^{\rho_{\rm{int}}}: l \in \mathfrak{K}(\mathfrak{B})\}$ is dense in  $\mathcal{O}_c(\mathfrak{L}_2(G)) \otimes \rho_{\rm{int}}(C^{\ast}(\mathfrak{B} ))$, we have $\mathfrak{E}(\rho, \mathfrak{B} ) \subset M (\mathcal{O}_c(\mathfrak{L}_2(G)) \otimes \rho_{\rm{int}}(C^{\ast}(\mathfrak{B} )))$.
\end{lemma}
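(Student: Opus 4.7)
The plan is to first use linearity to reduce to the case where $l$ has the elementary form $l = a\chi_{E \times F}$ with $a \in C^*(\mathfrak{B})$ and $E, F \subset G$ compact, then verify everything by direct computation followed by Fubini's theorem. For such $l$ we have
\begin{equation*}
k(x,z) \star l(z,y) = \chi_E(z)\chi_F(y)\,(k(x,z) \star a).
\end{equation*}
By the continuity statement noted before the proposition, $(s,t) \mapsto k(s,t) \star a$ is continuous $G \times G \to C^*(\mathfrak{B})$, and it inherits the compact support of $k$. Hence $z \mapsto k(x,z) \star l(z,y)$ is continuous with compact support for every $(x,y)$, and in particular lies in $\mathfrak{L}_1(G, C^*(\mathfrak{B}))$; the iterated integral $k \star l(x,y) = \chi_F(y)\int_E k(x,z) \star a\,dz$ is then continuous and compactly supported in $(x,y)$ by \cite[II.15.19]{MR936628}, so $k \star l \in \mathfrak{L}(G \times G, C^*(\mathfrak{B})) \subset \mathfrak{L}_2(G \times G, C^*(\mathfrak{B}))$. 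The analysis of $l \star k$ is symmetric.

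To establish the operator identity I would expand, for $\xi \in \mathfrak{L}_2(G, X)$,
\begin{equation*}
T_k^{\rho}\bigl(T_l^{\rho_{\rm{int}}}(\xi)\bigr)(x) = \int_G \rho_D(k(x,z))\Bigl(\int_G \rho_{\rm{int}}(l(z,y))\,\xi(y)\,dy\Bigr)dz,
\end{equation*}
pull the bounded operator $\rho_D(k(x,z))$ inside the inner integral, and then apply the identity $\rho_D(k(x,z))\,\rho_{\rm{int}}(l(z,y)) = \rho_{\rm{int}}(k(x,z) \star l(z,y))$ from Section 2. Fubini's theorem, justified because $\rho_D(k(x,z))$ is uniformly bounded on the compact support of $k$ and $l$ is a simple compactly supported function, lets me interchange the $z$ and $y$ integrations; a further interchange of the $z$-integral with the continuous map $\rho_{\rm{int}}$ yields $T_{k \star l}^{\rho_{\rm{int}}}(\xi)(x)$. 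The identity for $T_l^{\rho_{\rm{int}}}T_k^{\rho}$ is verified in exactly the same way.

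For the final inclusion, set $\mathcal{A} := \mathcal{O}_c(\mathfrak{L}_2(G)) \otimes \rho_{\rm{int}}(C^*(\mathfrak{B}))$. The operator identities just established, combined with the fact cited from \cite{MTT16} that $T_{k \star l}^{\rho_{\rm{int}}} \in \mathcal{A}$ whenever $k \star l \in \mathfrak{L}_2(G \times G, C^*(\mathfrak{B}))$, give $T_k^{\rho} T_l^{\rho_{\rm{int}}}, T_l^{\rho_{\rm{int}}} T_k^{\rho} \in \mathcal{A}$ for $k \in \mathfrak{L}(\mathfrak{B} \times \mathfrak{B})$ and $l \in \mathfrak{K}(\mathfrak{B})$. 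The norm bound $\|T_k^{\rho}\| \leq \|k\|_2$ from Proposition \ref{fhjsdvnc}, together with density of $\mathfrak{L}(\mathfrak{B} \times \mathfrak{B})$ in $\mathfrak{L}_2(\mathfrak{B} \times \mathfrak{B})$, extends the statement to all $k \in \mathfrak{L}_2(\mathfrak{B} \times \mathfrak{B})$. Finally, density of $\{T_l^{\rho_{\rm{int}}} : l \in \mathfrak{K}(\mathfrak{B})\}$ in $\mathcal{A}$, closedness of $\mathcal{A}$, and continuity of multiplication allow passage to arbitrary $T \in \mathfrak{E}(\rho, \mathfrak{B})$, yielding $T\mathcal{A}, \mathcal{A}T \subset \mathcal{A}$, i.e.\ $T \in M(\mathcal{A})$.

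The main obstacle I anticipate is the careful justification of the double Fubini and the interchange of integration with the operator $\rho_D(k(x,z))$ and with $\rho_{\rm{int}}$; this is, however, made routine by the reduction to elementary $l$, since then every integrand is effectively supported on a compact set and uniformly norm-bounded there, so all the classical convergence and interchange theorems apply without incident.
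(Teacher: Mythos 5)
Your proposal is correct and follows essentially the same route as the paper's proof: reduction to elementary simple $l$, the observation that $q(x,z,y)=k(x,z)\star l(z,y)$ is bounded, measurable and compactly supported so that Fubini applies, norm-continuity of $\rho_{\mathrm{int}}$ to pull the $z$-integral through, and density of $\{T_l^{\rho_{\mathrm{int}}}\}$ plus the $\|T_k^{\rho}\|\leq\|k\|_2$ bound for the multiplier inclusion. The one slip is the claim that $k\star l$ is continuous (it need not be, because of the $\chi_F(y)$ factor), but the boundedness, measurability and compact support you also establish already give the required membership in $\mathfrak{L}_2(G\times G, C^{\ast}(\mathfrak{B}))$, which is exactly what the paper uses.
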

\begin{proof}
Let $l$ be a function with the form of (\ref{standard simple}). Suppose ${\rm{supp}}(k) \subset E \times F$, where $E$ and $F$ are compact subsets of $G$. Our first task is to prove that the map $z \mapsto  k(x,z) \star l(z, y) $ is in $\mathfrak{L}_1(G, C^{\ast}(\mathfrak{B}))$ for almost $(x,y) \in G \times G$, then prove that $(x,y) \mapsto \int_G  k(x,z) \star l(z, y) dz$ is in $\mathfrak{L}_2(G, C^{\ast}(\mathfrak{B} ))$. Finally we verify $T_k^{\rho} T_l^{\rho_{\rm{int}}}=T_{k \star l}^{\rho_{\rm{int}}}$.

Let define $q: G \times G \times G \to C^{\ast}(\mathfrak{B} )$ by
\begin{equation*}
q(x,z,y)=k(x,z) \star l(z,y)=\sum_{i=1}^n (k(x,z) \star a_i) \chi_{E_i \times F_i}(z,y).
\end{equation*}
then $q$ is bounded, measurable and compactly supported by $E \times (\bigcup_{i=1}^nE_i) \times (\bigcup_{i=1}^nF_i) $. In particular, $q \in \mathfrak{L}_1(G \times G  \times  G, C^{\ast}(\mathfrak{B} ))$, thus by Fubini's Theorem the map $z \mapsto  k(x,z) \star l(z, y) $ is in $\mathfrak{L}_1(G, C^{\ast}(\mathfrak{B}))$ for almost  $(x,y) \in G \times G$. 

For any fixed $x,y$, we define $p_{x,y}: G \to  C^{\ast}(\mathfrak{B} )$ by $p_{x,y}(z)=q(x,z,y)(=k(x,z) \star l(z,y))$, then by the fact that $q \in \mathfrak{L}_1(G \times G  \times  G, C^{\ast}(\mathfrak{B} ))$ and Fubini's Theorem, the map $k \star l:(x,y) \mapsto \int_G p_{x,y}(z) dz$ is measurable. On the other hand, since $q$ is compactly supported supported by $E \times (\bigcup_{i=1}^nE_i) \times (\bigcup_{i=1}^nF_i) $ and bounded, we conclude that $k \star l: (x,y) \mapsto \int_G p_{x,y}(z) dz=\int_G q(x,z,y) dz$ is bounded, measurable and compactly supported by $E \times \bigcup_{i=1}^nF_i$. Therefore, this map is square-integrable, we proved that $k \star l \in \mathfrak{L}_2(G \times G, C^{\ast}(\mathfrak{B} ))$.

By our previous discussion, for almost $(x,y) \in G \times G$ we have $C^{\ast}(\mathfrak{B})$-valued integration $\int_G k(x,z) \star l(z,y) dz$,  thus by the norm contunuity of $\rho_{\rm{int}}: C^{\ast}(\mathfrak{B}) \to \mathcal{O}(X)$ we have
\begin{equation*}
\rho_{\rm{int}} \left(\int_G k(x,z) \star l(z,y) dz \right)=\int_G \rho_{\rm{int}}(k(x,z) \star l(z,y)) dz,
\end{equation*}
 for almost $(x,y) \in G \times G$, hence for any fixed $\xi \in \mathfrak{L}_2(G,X)$, for almost $x \in G$ we have
\begin{equation*}\begin{split}
T_{k \star l}^{\rho_{\rm{int}}} \xi(x)&=\int_G \rho_{\rm{int}} \text{\LARGE{(}}\int_G k(x,z) \star l(z,y) dz \text{\LARGE{)}}\xi(y) dy
\\&= T_k^{\rho} T_l^{\rho_{\rm{int}}}\xi (x).
\end{split}\end{equation*}
Thus we have proved that $T_{k \star l}^{\rho_{\rm{int}}}= T_k^{\rho} T_l^{\rho_{\rm{int}}}$. $ T_l^{\rho_{\rm{int}}} T_k^{\rho} =T_{l \star k}^{\rho_{\rm{int}}}$ may be proved by the same argument.
\end{proof}

\bigskip

Recall that for any $C^{\ast}$-algebra $A$, if $S: A \to \mathcal{O}(X)$ is a non-degenerate $^{\ast}$-representation of $A$ on Hilbert space $X$, then $S$ can be uniquely extended to $M(A)$. Therefore we can regard $T$ as a non-degenerate $^{\ast}$-representation of $M(A)$ on $X$.

\begin{lemma}\label{19011401}
Let $r: B \to \mathcal{O}(Y)$ be a non-degenerate $\ast$-representation of $\mathfrak{B}$ on Hilbert space $Y$ which is weakly contained $\rho$. Then the following map
\begin{equation*}
\Xi_{\rho, r}: T_{k'}^{\rho_{{\rm{int}}}} \mapsto T_{k'}^{r_{{\rm{int}}}} \ \ \ \ (k' \in \mathfrak{L}_2(G \times G, C^{\ast}(\mathfrak{B})))
\end{equation*}
can be extended to a $\ast$-homomorphism from $\mathcal{O}_c(\mathfrak{L}_2(G)) \otimes \rho_{{\rm{int}}}(\mathfrak{B})$ onto $\mathcal{O}_c(\mathfrak{L}_2(G)) \otimes r_{{\rm{int}}}(\mathfrak{B})$. Therefore we can regard  $\Xi_{\rho, r}$ as a $^{\ast}$-homomorphism from $M(\mathcal{O}_c(\mathfrak{L}_2(G)) \otimes \rho_{{\rm{int}}}(\mathfrak{B}))$ onto  $\mathcal{O}_c(\mathfrak{L}_2(G)) \otimes r_{{\rm{int}}}(\mathfrak{B})$, and we have 
\begin{equation}\label{nvjhdffhugv}
\Xi_{\rho, r}(T_k^{\rho})=T_k^{r}  \ \ \ \ (k \in \mathfrak{L}_2(\mathfrak{B} \times \mathfrak{B}))
\end{equation}
and
\begin{equation}\label{fcnhnjdsidf}
\Xi_{\rho, r}(T_k^{\rho}) \Xi_{\rho, r}(T_l^{\rho_{{\rm{int}}}})=T_k^{r} T_{l}^{r_{{\rm{int}}}} \ \ \ \ (k \in \mathfrak{L}(\mathfrak{B} \times \mathfrak{B}); \ l \in \mathfrak{K}({\mathfrak{B} }))
\end{equation}
\end{lemma}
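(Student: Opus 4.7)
The strategy is to build $\Xi_{\rho,r}$ in two stages, first as a surjective $*$-homomorphism between the tensor product $C^{\ast}$-algebras and then as its standard extension to multiplier algebras, and to identify its value on $T_k^{\rho}$ by pairing against the dense set $\{T_l^{\rho_{\rm{int}}} : l \in \mathfrak{K}(\mathfrak{B})\}$ via Lemma~\ref{maintool}.

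First I would use the hypothesis that $r$ is weakly contained in $\rho$ to pass to integrated forms, obtaining $\|r_{\rm{int}}(a)\| \leq \|\rho_{\rm{int}}(a)\|$ for every $a \in C^{\ast}(\mathfrak{B})$. This yields a well-defined surjective $*$-homomorphism $\pi : \rho_{\rm{int}}(C^{\ast}(\mathfrak{B})) \to r_{\rm{int}}(C^{\ast}(\mathfrak{B}))$ sending $\rho_{\rm{int}}(a) \mapsto r_{\rm{int}}(a)$. Since $\mathcal{O}_c(\mathfrak{L}_2(G))$ is nuclear, tensoring with the identity produces an unambiguous surjective $*$-homomorphism
$$\Phi : \mathcal{O}_c(\mathfrak{L}_2(G)) \otimes \rho_{\rm{int}}(C^{\ast}(\mathfrak{B})) \longrightarrow \mathcal{O}_c(\mathfrak{L}_2(G)) \otimes r_{\rm{int}}(C^{\ast}(\mathfrak{B})).$$
For a simple function $l = \sum_i a_i \chi_{E_i \times F_i} \in \mathfrak{K}(\mathfrak{B})$, a direct calculation from the definition exhibits $T_l^{\rho_{\rm{int}}}$ as a finite sum of elementary tensors whose second factors are $\rho_{\rm{int}}(a_i)$, so $\Phi(T_l^{\rho_{\rm{int}}}) = T_l^{r_{\rm{int}}}$. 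Density of $\mathfrak{K}(\mathfrak{B})$ in $\mathfrak{L}_2(G \times G, C^{\ast}(\mathfrak{B}))$ together with the $\|\cdot\|_2$-continuity of $k' \mapsto T_{k'}^{\rho_{\rm{int}}}$ then extends this to every $k' \in \mathfrak{L}_2(G \times G, C^{\ast}(\mathfrak{B}))$, showing that $\Phi$ is precisely the desired $\Xi_{\rho,r}$.

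For the passage to multipliers I would invoke the standard fact that a surjective $*$-homomorphism is nondegenerate as a map into its codomain, hence extends uniquely to a $*$-homomorphism $\overline{\Xi}_{\rho,r}$ of multiplier algebras characterised by $\overline{\Xi}_{\rho,r}(m)\Phi(b) = \Phi(mb)$. By Lemma~\ref{maintool}, $T_k^{\rho}$ sits in the source multiplier algebra, so $\overline{\Xi}_{\rho,r}(T_k^{\rho})$ is defined, and for every $l \in \mathfrak{K}(\mathfrak{B})$ Lemma~\ref{maintool} applied twice (once at $\rho$, once at $r$) gives
$$\overline{\Xi}_{\rho,r}(T_k^{\rho})\, T_l^{r_{\rm{int}}} \;=\; \Phi\bigl(T_k^{\rho} T_l^{\rho_{\rm{int}}}\bigr) \;=\; \Phi\bigl(T_{k \star l}^{\rho_{\rm{int}}}\bigr) \;=\; T_{k \star l}^{r_{\rm{int}}} \;=\; T_k^{r}\, T_l^{r_{\rm{int}}}.$$
As $l$ ranges over $\mathfrak{K}(\mathfrak{B})$ the operators $T_l^{r_{\rm{int}}}$ are dense in $\mathcal{O}_c(\mathfrak{L}_2(G)) \otimes r_{\rm{int}}(C^{\ast}(\mathfrak{B}))$, which acts nondegenerately on $\mathfrak{L}_2(G, Y)$; consequently $\overline{\Xi}_{\rho,r}(T_k^{\rho})$ is uniquely pinned down by the identity above and must equal $T_k^{r}$, proving (\ref{nvjhdffhugv}). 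The identity (\ref{fcnhnjdsidf}) then follows immediately by multiplicativity, combining $\overline{\Xi}_{\rho,r}(T_k^{\rho}) = T_k^{r}$ with $\Xi_{\rho,r}(T_l^{\rho_{\rm{int}}}) = T_l^{r_{\rm{int}}}$.

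The main obstacle is the last identification: one needs Lemma~\ref{maintool} in both representations simultaneously in order to recognise $\overline{\Xi}_{\rho,r}(T_k^{\rho})$ from its products against a dense test family, and this family must genuinely separate multipliers via the nondegeneracy of the codomain $C^{\ast}$-algebra. Once that is in place, the rest reduces to routine bookkeeping about tensor products, density of $\mathfrak{K}(\mathfrak{B})$, and the universal property of the multiplier-algebra extension.
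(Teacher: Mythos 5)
Your proposal is correct and follows essentially the same route as the paper: both build $\Xi_{\rho,r}$ as $S\otimes I_{\mathfrak{L}_2(G)}$ from the $*$-homomorphism induced by weak containment, pass to the multiplier algebra, and then identify $\Xi_{\rho,r}(T_k^{\rho})=T_k^{r}$ by multiplying against the dense family $\{T_l^{\rho_{\rm{int}}}: l\in\mathfrak{K}(\mathfrak{B})\}$ via Lemma~\ref{maintool} and invoking nondegeneracy. The only (harmless) difference is that you are more explicit about nuclearity of the compacts and the multiplier-extension mechanism, while the paper adds one final sentence extending (\ref{nvjhdffhugv}) from $k\in\mathfrak{L}(\mathfrak{B}\times\mathfrak{B})$ to $k\in\mathfrak{L}_2(\mathfrak{B}\times\mathfrak{B})$ by density, a step you should also record since Lemma~\ref{maintool} is stated only for continuous compactly supported $k$.
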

\begin{proof}
Since $r$ is weakly contained in $\rho$, the map $S: \rho_{\rm{int}}(C^{\ast}(\mathfrak{B})) \to r_{\rm{int}}(C^{\ast}(\mathfrak{B}))$ defined by $S( \rho_{\rm{int}}(a))=r_{\rm{int}}(a)$ $(a \in C^{\ast}(\mathfrak{B}))$ is $^{\ast}$-homomorphism. Now $S \otimes I_{\mathfrak{L}_2(G)}$ is $^{\ast}$-homomorpphism which is extension of $\Xi_{\rho,r}$.

Therefore for each $k \in \mathfrak{L}(\mathfrak{B} \times \mathfrak{B})$ and $l \in  \mathfrak{K}({\mathfrak{B} })$, by Lemma \ref{maintool} we have
\begin{equation*}\begin{split}
((S \otimes I_{\mathfrak{L}_2(G)})(T_k^{\rho})) \ (S \otimes I_{\mathfrak{L}_2(G)}(T_l^{\rho_{\rm{int}}}))= T_k^{r} \circ ((S \otimes I_{\mathfrak{L}_2(G)})(T_l^{\rho_{\rm{int}}})) ,
\end{split}\end{equation*}
since $\{T_l^{\rho_{\rm{int}}}: l \in  \mathfrak{K}({\mathfrak{B} })\}$ is dense in $\mathcal{O}_c(Y) \otimes r_{\rm{int}}(C^{\ast}(\mathfrak{B}))$ and $S \otimes I_{\mathfrak{L}_2(G)}$ is non-degenerate, we conclude that
\begin{equation}\label{cnnvhgdfu}
\Xi_{\rho, r}(T_k^{\rho})=(S \otimes I_{\mathfrak{L}_2(G)})(T_k^{\rho})= T_k^{r} \ \ \ \ (k \in \mathfrak{L}(\mathfrak{B} \times \mathfrak{B})).
\end{equation} 
Now (\ref{nvjhdffhugv}) is proved for $k \in \mathfrak{L}(\mathfrak{B} \times \mathfrak{B})$. By that $\mathfrak{L}(\mathfrak{B})$ is dense in $\mathfrak{L}_2(\mathfrak{B})$, (\ref{nvjhdffhugv}) holds for $k \in \mathfrak{L}_2(\mathfrak{B} \times \mathfrak{B})$.

(\ref{fcnhnjdsidf}) is direct consequence of Lemma \ref{maintool}.

The proof is complete.
\end{proof}

\begin{proposition}
Let $k \in \mathfrak{L}_2(\mathfrak{B} \times \mathfrak{B})$. Then $T^{\rho}_k$=0 if and only if $k(x,y)=0$ for almost all $(x,y) \in G \times G$. 
\end{proposition}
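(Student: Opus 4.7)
The reverse direction is immediate from formula~(\ref{definition of the operator}). For the forward direction, assume $T^\rho_k=0$. The plan is to peel the operator kernel off the integral by testing $T^\rho_k$ against product vectors, and then to promote vanishing of scalar matrix coefficients to vanishing of $\rho_D(k(x,y))$ pointwise. I would fix $v,w\in X$ and Borel sets $E,F\subset G$ of finite measure, take $\xi(y)=\chi_F(y)v$ and $\eta(x)=\chi_E(x)w$ in $\mathfrak{L}_2(G,X)$, and use the estimate $|\langle\rho_D(k(x,y))v,w\rangle|\le \|k(x,y)\|\,\|v\|\,\|w\|$ together with $k\in\mathfrak{L}_2(\mathfrak{B}\times\mathfrak{B})$ to justify Fubini. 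Writing $K_{v,w}(x,y):=\langle\rho_D(k(x,y))v,w\rangle$, the same bound puts $K_{v,w}$ in the scalar space $L^2(G\times G)$, and one computes
\[
0=\langle T^\rho_k\xi,\eta\rangle_{\mathfrak{L}_2(G,X)} = \int_E\int_F K_{v,w}(x,y)\,dy\,dx.
\]
Since this holds for every rectangle of finite measure, $K_{v,w}=0$ almost everywhere on $G\times G$, producing a null set $N_{v,w}$.

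Next I would consolidate the sets $N_{v,w}$ into a single null set. In the discrete case this is automatic, since the only null set is empty: for each $(x,y)$ and every $v,w\in X$ one has $\langle\rho_D(k(x,y))v,w\rangle=0$, whence $\rho_D(k(x,y))=0$. In the non-discrete case, the second countability of $\mathfrak{B}$ (Remark~\ref{cvnhjfuk}) combined with the approximation of $k$ by elements of $\mathfrak{R}(\mathfrak{B}\times\mathfrak{B})$ from Section~2 places the essential range of $\rho_D\circ k$ inside a separable $C^{\ast}$-subalgebra $\mathcal{A}\subset\mathcal{O}(X)$. A standard disintegration argument produces a separable $\mathcal{A}$-invariant subspace $X_0\subset X$ on which $\mathcal{A}$ still acts faithfully; choosing a countable dense subset $V\subset X_0$, the union $N:=\bigcup_{v,w\in V} N_{v,w}$ is null, and for $(x,y)\notin N$ the operator $\rho_D(k(x,y))$ annihilates a total subset of $X_0$, so $\rho_D(k(x,y))|_{X_0}=0$ and hence $\rho_D(k(x,y))=0$.

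Finally, for $(x,y)\notin N$, writing $k(x,y)=(x,y;a)$ with $a\in B_{xy^{-1}}$, the identity $\rho(a)=\rho_D(k(x,y))=0$ yields $\rho(a^{\ast}a)=\rho(a)^{\ast}\rho(a)=0$ with $a^{\ast}a\in B_e$; faithfulness of $\rho|_{B_e}$ forces $a^{\ast}a=0$, so $\|a\|^2=\|a^{\ast}a\|=0$ and $k(x,y)=0$ in $D_{x,y}$. The step I expect to cost the most care is the separability reduction in the non-discrete case, that is, turning the uncountable family $\{N_{v,w}\}_{v,w\in X}$ into a single null set by reducing test vectors to a countable total set; the remaining ingredients (Fubini, Cauchy--Schwarz, the $C^{\ast}$-identity on fibers) are routine.
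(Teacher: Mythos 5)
Your route is genuinely different from the paper's. The paper does not argue directly with matrix coefficients: it uses separability of $\rho_{\rm{int}}(C^{\ast}(\mathfrak{B}))$ to produce a weakly equivalent representation $r$ on a separable Hilbert space, observes that the $\ast$-homomorphism $\Xi_{\rho,r}$ of Lemma \ref{19011401} is then faithful, so $T^{\rho}_k=0$ forces $T^{r}_k=0$, and quotes the separable case from \cite{MTT16}. Your argument is self-contained, and the rectangle-testing, Fubini, and $C^{\ast}$-identity steps, as well as the discrete case, are all fine.

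The gap is exactly where you predicted: the claim that the essential range of $\rho_D\circ k$ sits inside a norm-separable $C^{\ast}$-subalgebra $\mathcal{A}\subset\mathcal{O}(X)$ is false in general in the non-discrete case. Second countability of the bundle space and approximation by $\mathfrak{R}(\mathfrak{B}\times\mathfrak{B})$ only control the bundle topology, and $\rho_D$ is merely strong-operator continuous; the essential range of $k$ meets uncountably many fibres, and within a fibre $\rho_D$ is norm-continuous, but the union over fibres need not be norm-separable. Concretely, for the trivial line bundle over $G=\mathbb{R}$ with $\rho$ built from the regular representation, the essential range of $\rho_D\circ k$ contains $\{\lambda_t: t\in[-1,1]\}$ and $\|\lambda_t-\lambda_s\|=2$ for $t\neq s$, so no separable $\mathcal{A}$ exists. (When $X(\rho)$ is separable your argument needs none of this: take $V$ dense in $X$ and conclude $\rho_D(k(x,y))=0$ outright.) For non-separable $X(\rho)$ the reduction should instead exploit the separable objects that actually are available, namely $B_e$ and $C^{\ast}(\mathfrak{B})$: choose unit vectors witnessing $\|\rho(b_n)\|=\|b_n\|$ for a countable dense set $\{b_n\}\subset B_e$, and let $X_0$ be the closure of $\rho_{\rm{int}}(C^{\ast}(\mathfrak{B}))$ applied to their span. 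Then $X_0$ is separable, invariant under $\rho(B)\subset M(C^{\ast}(\mathfrak{B}))$, contains the chosen vectors by non-degeneracy, and $\rho|_{B_e}$ remains isometric on $X_0$. Taking $V$ dense in $X_0$, your null set $N$ gives $\rho(a)|_{X_0}=0$ off $N$ (invariance turns the vanishing compression into vanishing restriction), hence $\rho(a^{\ast}a)|_{X_0}=0$ with $a^{\ast}a\in B_e$, hence $a^{\ast}a=0$ and $a=0$. With this replacement for your separability step, the proof is complete.
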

\begin{proof}
Since $\rho_{\rm{int}}(C^{\ast}(\mathfrak{B}))$ is separable $C^{\ast}$-algebra, let  $R: \rho_{\rm{int}}(C^{\ast}(\mathfrak{B})) \to \mathcal{O}(Y) $ be a non-degenerate faithful $\ast$-representation of $\rho_{\rm{int}}(C^{\ast}(\mathfrak{B}))$ on separable Hilbert space Y, and let $r:B \to \mathcal{O}(Y)$ be the $^{\ast}$-representation of $\mathfrak{B}$ on $Y$ such that $R \circ \rho_{\rm{int}}$ is the integrated form of $r$. Then $r$ is weakly equivalent to $\rho$. By \cite{MTT16}$ T^r_k=0$ if and only if $k(x,y)=0$ for almost all $(x,y) \in G \times G$. 

Now we prove that $T^{\rho}_k=0$ implies that $k(x,y)=0$ almost everywhere. Since $\rho$ and $r$ are weakly equivalent, then the map $\Xi_{\rho, r}$ is faithful $^{\ast}$-homomorhpism,  hence $T_k^{\rho}=0$ implies that $T^r_k=0$. On the other hand, by our discussion in the last paragraph, we conclude that $T_k^{\rho}=0$ implies that $\|k(x,y)\|=0$ almost everywhere. Our proof is complete.
\end{proof}

\bigskip

\begin{definition}\label{chsduiiusrfcnh}
Let $\mathfrak{D}$ be a Banach bundle over locally compact space M with bundle space $D'$. We call a continuous map $\Phi: D' \to D'$ multiplier of $\mathfrak{D}$ if $\Phi$ satisfies: 

$(i)$ $\Phi(D'_{x}) \subset D'_x$ for all $x \in M$; 

$(ii)$ $\Phi|D'_x$ is bounded linear map such that ${\rm{sup}}_{x \in M} \|\Phi|D'_x\| < \infty$. 
\end{definition}

Let $\Phi: D \to D$ be multiplier of $\mathfrak{B} \times \mathfrak{B}$. We denote ${\rm{sup}}_{(x,y) \in G \times G}\|\Phi|D_{x,y}\|$ by $M$. For any $k \in \mathfrak{L}_2 (\mathfrak{B} \times \mathfrak{B})$ we define $\Phi \cdot k (x,y)=\Phi(k(x,y))$ $(x,y \in G)$, then $\Phi \cdot k  \in \mathfrak{L}_2 (\mathfrak{B} \times \mathfrak{B})$.

\begin{definition}
Let $\Phi: D \to D$ be multiplier of $\mathfrak{B} \times \mathfrak{B}$. If the map ${S_{\rho}},_{\Phi}: \{T_k^{\rho}: k \in \mathfrak{L}_2 (\mathfrak{B} )\} \to \mathcal{O}(\mathfrak{L}_2(G,X))$ which is defined by
\begin{equation*}
{S_{\rho}},_{\Phi}(T_k^{\rho})=T_{\Phi \cdot k}^{\rho}
\end{equation*}
is completely bounded linear map, then we say $\Phi$ is Schur $(\mathfrak{B} ,\rho)$-multiplier. Furthermore, if ${S_{\rho}},_{\Phi}$ is completely positive, we say $\Phi$ is completely positive Schur $(\rho, \mathfrak{B})$-multiplier.

If $\Phi$ is Schur $(\mathfrak{B} ,\rho)$-multiplier, we define
\begin{equation*}
\|\Phi\|_{\mathfrak{S}, \rho}:=\|{S_{\rho}},_{\Phi}\|_{\rm{cb}}.
\end{equation*}
\end{definition}

Therefore, if $\Phi$ is Schur $(\mathfrak{B} , \rho)$-multiplier, $S_{\rho,\Phi}$ can be extended to a completely bounded map from $\mathfrak{E}(\rho, \mathfrak{B} )$ into $\mathcal{O}(\mathfrak{L}_2(G, X))$. Thus we will always consider that $S_{\rho,\Phi}$ is a completely bounded map defined on $\mathfrak{E}(\rho, \mathfrak{B} )$.

The following proposition is useful in our further study:

\begin{proposition}\label{19012303}
If $r: B \to \mathcal{O}(Y)$ is a non-degenerate $\ast$-representation of $\mathfrak{B} $ on a Hilbert space $Y$ which is weakly equivalent to $\rho$, then $\Phi$ is Schur $(\mathfrak{B} , r)$-multiplier if and only if it is $(\mathfrak{B} , \rho)$-multiplier.
\end{proposition}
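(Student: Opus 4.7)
The plan is to exploit the $\ast$-homomorphism $\Xi_{\rho,r}$ produced in Lemma \ref{19011401}, noting that under the hypothesis of weak equivalence it becomes a $\ast$-isomorphism, hence completely isometric, and then realise $S_{r,\Phi}$ as a conjugate of $S_{\rho,\Phi}$.

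First I would set up the isomorphism. Weak equivalence of $\rho$ and $r$ means each is weakly contained in the other, so $\|\rho_{\rm int}(a)\|=\|r_{\rm int}(a)\|$ for every $a\in C^{\ast}(\mathfrak{B})$; the assignment $\rho_{\rm int}(a)\mapsto r_{\rm int}(a)$ is therefore a well-defined $\ast$-isomorphism $S:\rho_{\rm int}(C^{\ast}(\mathfrak{B}))\to r_{\rm int}(C^{\ast}(\mathfrak{B}))$, and $S\otimes I_{\mathfrak{L}_2(G)}$ is a $\ast$-isomorphism between the corresponding spatial tensor products. By Lemma \ref{19011401} this extension is exactly $\Xi_{\rho,r}$, and by (\ref{nvjhdffhugv}) it sends $T_k^{\rho}$ to $T_k^{r}$ for every $k\in\mathfrak{L}_2(\mathfrak{B}\times\mathfrak{B})$. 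Restricting $\Xi_{\rho,r}$ to the norm-closure of $\{T_k^{\rho}\}$ gives a $\ast$-isomorphism $\mathfrak{E}(\rho,\mathfrak{B})\to\mathfrak{E}(r,\mathfrak{B})$; in particular it is completely isometric.

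Next I would verify the intertwining relation. For $k\in\mathfrak{L}_2(\mathfrak{B}\times\mathfrak{B})$ one computes directly
\begin{equation*}
\Xi_{\rho,r}\bigl(S_{\rho,\Phi}(T_k^{\rho})\bigr)=\Xi_{\rho,r}(T_{\Phi\cdot k}^{\rho})=T_{\Phi\cdot k}^{r}=S_{r,\Phi}(T_k^{r})=S_{r,\Phi}\bigl(\Xi_{\rho,r}(T_k^{\rho})\bigr),
\end{equation*}
so that on the generating set $\{T_k^{\rho}:k\in\mathfrak{L}_2(\mathfrak{B}\times\mathfrak{B})\}$ one has $S_{r,\Phi}=\Xi_{\rho,r}\circ S_{\rho,\Phi}\circ \Xi_{\rho,r}^{-1}$. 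Since $\Xi_{\rho,r}$ is a $\ast$-isomorphism between $\mathfrak{E}(\rho,\mathfrak{B})$ and $\mathfrak{E}(r,\mathfrak{B})$, and $S_{\rho,\Phi}$, $S_{r,\Phi}$ send $T_k^{\rho}$ and $T_k^{r}$ respectively into those $C^{\ast}$-algebras, the relation identifies $S_{r,\Phi}$ (on its domain) with a conjugate of $S_{\rho,\Phi}$ by a completely isometric $\ast$-isomorphism.

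From this the equivalence is immediate: if $S_{\rho,\Phi}$ is completely bounded, then $S_{r,\Phi}$ equals $\Xi_{\rho,r}\circ S_{\rho,\Phi}\circ\Xi_{\rho,r}^{-1}$ on a dense subset of $\mathfrak{E}(r,\mathfrak{B})$, and the right-hand side is completely bounded with the same cb-norm, so $S_{r,\Phi}$ extends to a completely bounded map, and $\|\Phi\|_{\mathfrak{S},r}\leq \|\Phi\|_{\mathfrak{S},\rho}$; symmetry (swap the roles of $\rho$ and $r$, using that weak equivalence is symmetric and that $\Xi_{r,\rho}$ inverts $\Xi_{\rho,r}$) gives the reverse inequality. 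The only point requiring care is that $\Xi_{\rho,r}$ really is a $\ast$-isomorphism of the ambient $C^{\ast}$-algebras and not merely a $\ast$-homomorphism; this is the step I expect to be the main obstacle, but it is pinned down by the norm equality coming from weak equivalence together with Lemma \ref{19011401}.
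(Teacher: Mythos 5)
Your proposal is correct and follows essentially the same route as the paper: both identify $\Xi_{\rho,r}$ (restricted to $\mathfrak{E}(\rho,\mathfrak{B})$) as a $\ast$-isomorphism onto $\mathfrak{E}(r,\mathfrak{B})$ with inverse $\Xi_{r,\rho}$, verify $S_{r,\Phi}=\Xi_{\rho,r}\circ S_{\rho,\Phi}\circ\Xi_{\rho,r}^{-1}$ on the generators $T_k^{\rho}$, and conclude by conjugation and symmetry. You simply spell out in more detail the point the paper leaves implicit, namely that weak equivalence forces the norm equality making $\Xi_{\rho,r}$ injective.
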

\begin{proof}
Since $r$ is weakly to $\rho$, by Lemma \ref{19011401}, the restriction of $\Xi_{\rho, r}$ on $\mathfrak{E}(\rho, \mathfrak{B})$, i.e
\begin{equation*}
\Xi_{\rho, r}|_{\mathfrak{E}(\rho, \mathfrak{B})}: \mathfrak{E}(\rho, \mathfrak{B}) \to \mathfrak{E}(r, \mathfrak{B}), T^{\rho}_k \mapsto T^{r}_k \ \ \ \ (k \in \mathfrak{L}(\mathfrak{B} \times \mathfrak{B})),
\end{equation*} 
is $\ast$-isoomomorphism from $\mathfrak{E}(\rho, \mathfrak{B})$ onto $\mathfrak{E}(r, \mathfrak{B})$ with inverse 
$\Xi_{r, \rho}|_{\mathfrak{E}(r, \mathfrak{B})}$.

Suppose that $\Phi$ is Schur $(\mathfrak{B}, \rho)$-multiplier, then ${S_{\rho}},_{\Phi}: \mathfrak{E}(\rho, \mathfrak{B}) \to \mathfrak{E}(\rho, \mathfrak{B})$ is completely bounded. Thus $\Xi_{\rho, r} \circ {S_{\rho}},_{\Phi} \circ \Xi_{\rho, r}^{-1}: \mathfrak{E}(r, \mathfrak{B}) \to \mathfrak{E}(r, \mathfrak{B})$ is completely bounded, and
$\Xi_{\rho, r} \circ {S_{\rho}},_{\Phi} \circ \Xi_{\rho, r}^{-1}=S_{r, \Phi}$, thus $\Phi$ is Schur $(\mathfrak{B} , r)$-multiplier. By the same argument, we may prove that if $\Phi$ is  Schur $(\mathfrak{B} , r)$-multiplier, then $\Phi$ is  Schur $(\mathfrak{B} , \rho)$-multiplier.

\end{proof}

\section{Characterization of Schur $(\mathfrak{B}, \rho)$-multipliers}

Our main result of this section is the characterization of Schur $(\mathfrak{B}, \rho)$- multipliers. For this ourpose we need to define a specific class of completely bounded maps. In the following  let $A$ be a fixed $C^{\ast}$-algebra, $M(A)$ be its multiplier algebra, and $X$ be a fixed Hilbert space.

\begin{definition}
 By a completely bounded map from $M(A)$ into $\mathcal{O}(X)$ with Property $(S_A)$ or briefly a $(S_A)$-map on M(A), we shall mean a completely bounded  linear map $f: M(A) \to \mathcal{O}(X)$ which is continuous with respect to the strict topology of $M(A)$ and strong$^{\ast}$ topology of $\mathcal{O}(X)$ on the norm bounded subsets of $M(A)$.

Let $C \subset M(A)$ be $C^{\ast}$-subalgebra. By a completely bounded $(S_{C,A})$-extendable map from $B$ into $\mathcal{O}(X)$ or briefly a $(S_{C,A})$-extendable
map on $C$, we shall mean a completely bounded map $g: C \to \mathcal{O}(X)$ which has an extension of $(S_A)$-map. We say that such extension is an $(S_{C,A})$-extension of $g.$
\end{definition}

It is easy to see that if $r: M(A) \to \mathcal{O}(Y)$ is a $\ast$-representation of $M(A)$ on Hilbert space $Y$,  then there is a $r$-stable space $Z \subset Y$ such that $r(A)(Z^{\bot})=\{0\}$ and $r(A)$ acts on $Z$ non-degenerately, by this fact we have

\begin{lemma}\label{19011601}
Let $f: M(A) \to \mathcal{O}(X)$ be a completely bounded map, then $f$ is $(S_A)$-map on M(A) if and only if f has a representation $(W, V, r, Y)$ such that $r|_A: A \to \mathcal{O}(Y)$ is non-degenerate. If $A$ and $X$ are separable, then $Y$ can be chosen to be separable.
\end{lemma}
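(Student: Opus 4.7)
}

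The plan is to treat the two directions separately, exploiting the standard fact that a $*$-representation $r$ of $M(A)$ whose restriction to $A$ is non-degenerate is automatically continuous from the strict topology on bounded sets to the strong-$*$ topology on $\mathcal{O}(Y)$. This is what makes the ``non-degenerate'' condition exactly the right bridge to Property $(S_A)$.

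\textbf{Sufficiency.} Suppose $f$ admits a representation $(W,V,r,Y)$ with $r|_A$ non-degenerate. Then $r$ is the canonical extension to $M(A)$ of the non-degenerate $*$-representation $r|_A$ of $A$. On norm-bounded subsets of $M(A)$, the strict topology is exactly the topology of right and left multiplication by elements of $A$; since $b_\lambda\to b$ strictly implies $b_\lambda a\to ba$ and $a b_\lambda\to ab$ in norm for every $a\in A$, and since $r(a)$ has dense range on the essential subspace by non-degeneracy, one concludes that $r(b_\lambda)\to r(b)$ in the strong operator topology, and hence (since the adjoint is also strictly continuous) strong-$*$. The composition $b\mapsto W\,r(b)\,V$ then inherits this continuity from the boundedness of $W$ and $V$, so $f$ is an $(S_A)$-map.

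\textbf{Necessity.} Conversely, assume $f$ is an $(S_A)$-map. Apply the Wittstock--Paulsen representation theorem (in the $M(A)$-setting; since $M(A)$ is unital this is straightforward) to obtain \emph{some} representation $f(b)=W' r'(b) V'$ with $r':M(A)\to\mathcal{O}(Y')$ a $*$-representation. Let $Z:=\overline{r'(A)Y'}\subset Y'$ be the essential subspace for $r'|_A$, with orthogonal projection $P_Z$. Two structural facts are crucial: $r'(A)|_{Z^\perp}=0$ (by construction), and $Z$ is $r'(M(A))$-invariant, because for $b\in M(A)$, $a\in A$, $y\in Y'$ one has $r'(b)r'(a)y=r'(ba)y\in r'(A)Y'\subset Z$ as $A$ is a two-sided ideal in $M(A)$. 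Define $r:=r'|_Z$ (as an $M(A)$-representation on $Z$), $W:=W'|_Z$, and $V:=P_Z V'$. By construction $r|_A$ is non-degenerate, so it only remains to verify $f(b)=W r(b) V$ for every $b\in M(A)$.

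To establish this identity I would invoke Property $(S_A)$ together with a bounded approximate identity $\{e_\lambda\}\subset A$. Since $be_\lambda\to b$ strictly in $M(A)$ with $\|be_\lambda\|\le\|b\|$, Property $(S_A)$ gives $f(be_\lambda)\to f(b)$ strong-$*$. On the other hand
\begin{equation*}
f(be_\lambda)=W' r'(b)\,r'(e_\lambda)\,V',
\end{equation*}
and a direct check (using $r'(A)Z^\perp=0$ and that $r'(e_\lambda)z\to z$ in norm for every $z$ in the dense subspace $r'(A)Y'$ of $Z$) shows $r'(e_\lambda)\to P_Z$ strongly on $Y'$. Composing with the bounded operator $W'r'(b)$ yields $W' r'(b) r'(e_\lambda) V'\to W' r'(b) P_Z V'$ in the strong operator topology. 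Comparing the two limits gives $f(b)=W' r'(b) P_Z V'$, and the $r'(M(A))$-invariance of $Z$ lets us rewrite this as $(W'|_Z)\,r(b)\,(P_Z V')=W\,r(b)\,V$, as required.

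\textbf{Separability.} If $A$ and $X$ are separable, one can arrange $Y'$ to be separable already at the Wittstock--Paulsen step (the standard construction builds $Y'$ from finite sums $\sum r'(b_i)V'x_i$, and a countable dense subset of $A$ together with a countable dense subset of $X$ suffices). The subspace $Z\subset Y'$ is then automatically separable. The main technical obstacle throughout is controlling the passage from the abstract dilation on $Y'$ to the essential part $Z$; once the ideal property of $A$ in $M(A)$ and the strict-to-strong-$*$ continuity of a non-degenerate extension are both in hand, the argument is essentially forced.
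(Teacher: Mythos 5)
Your proof is correct and follows essentially the same route as the paper's: sufficiency via the strict-to-strong-$*$ continuity (on bounded sets) of the canonical extension of a non-degenerate representation, and necessity by compressing a Stinespring/Paulsen dilation to the essential subspace $Z=\overline{r'(A)Y'}$ and recovering the identity $f(b)=W^{*}r(b)V$ from Property $(S_A)$ applied to a bounded net in $A$ converging strictly to $b$ (the paper uses a general such net where you use $be_\lambda$, an immaterial difference). The separability remark also matches the paper's.
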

\begin{proof}
Suppose $f$ is $(S_A)$-map on $M(A)$, then let $(W',V',r', Y')$ be non-degenerate representation of $f$ (notice that if $A$ and $X$ are separable, then $Y'$ can be chosen to be separable \cite[page 45]{Paulsen}), then there is a $r'$-stable space $Z \subset Y'$ such that $r'(A)(Z^{\bot})=\{0\}$ and $r'(A)$ acts on $Z$ non-degenerately. Let $E$ be the projection of $Y'$ onto $Z$, then for any $\xi \in X$ and $a \in M(A)$ we have
\begin{equation*}
f(a)(\xi)=W'^{\ast} r'(a) V'(\xi)=W'^{\ast} r'(a) E V'(\xi) + W'^{\ast} r'(a) (I- E) V'(\xi).  
\end{equation*}
Let define $r: M(A) \to \mathcal{O}(Z)$ by $r(a)=Er'(a)E$ for all $a \in M(A)$ (recall that $E$ is in the commutator algebra of $r'$ because $Z$ is $r$-stable), then $r$ is non-degenerate $\ast$-representation of $M(A)$ such that $r|_A$ is non-degenerate. Let $\{a_i\}_{i \in I} \subset A$ be a norm-bounded net such that $a_i \to a$ strictly, then $r(a_i) \to r(a)$ in strong$^{\ast}$-operator topology of $\mathcal{O}(Z)$  provided $i \to \infty$, we have
\begin{equation*}\begin{split}
f(a)(\xi)&={\rm{lim}}_{i \to \infty}f(a_i)(\xi)
\\&=W'^{\ast}Er(a)EV'(\xi) \ \ \ \ (\xi \in X),
\end{split}\end{equation*}
thus if we define $W=EW'$, $V=EV'$, then we have
\begin{equation*}
f(a)=W^{\ast}r(a)V \ \ \ \ (a \in M(A)).
\end{equation*}
Therefore if we set $Y=Z$, then $(W, V, r, Y)$ is representation of $f$ such that $r(A)$ acts on $Z$ non-degenerately.

Conversely, suppose $f$ has non-degenerate representation $(W,V,r, Y)$ such that $r|_A$ is non-degenerate $\ast$-representation of $A$ on $Y$, then let $\{a_i\}_{i \in I}$ be a norm-bounded net of elements of $A$ such that $a_i \to a$ in $M(A)$ strictly, we have
\begin{equation*}\begin{split}
{\rm{lim}}_{i \to \infty}f(a_i)(\xi)=f(a)(\xi) \ \ \ \ (\xi \in X).
\end{split}\end{equation*}
On the other hand, by the same argument we have
\begin{equation*}
f(a)^{\ast}(\xi)={\rm{lim}}_{i \to \infty}f(a_i)^{\ast}(\xi) \ \ \ \ (\xi \in X).
\end{equation*}
We conclude that $f(a_i) \to f(a)$ in strong$^{\ast}$ topology of $\mathcal{O}(X)$. Our proof is complete.
\end{proof}

\begin{corollary}\label{190130}
Let $f: A \to \mathcal{O}(X)$ be completely bounded map, then $f$ has unique $(S_{A,A})$-extension $\widetilde{f}$ on $M(A)$, which is determined by
\begin{equation}\label{19013001}
\widetilde{f}(a)={\rm{lim}}_{i \to \infty}f(a_i) , 
\end{equation}
where $a \in M(A)$ and $\{a_i\}_{i \in I} \subset A$ is a norm-bounded net converging to $a$ strictly, and the right side {\rm{limit}} is in strong$^{\ast}$-topology of $\mathcal{O}(X)$.
\end{corollary}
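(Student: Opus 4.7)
The plan is to build $\widetilde{f}$ by producing a Stinespring-type representation of $f$ and then extending the representing $\ast$-homomorphism to $M(A)$. By the standard Wittstock/Paulsen representation theorem for completely bounded maps on a $C^{\ast}$-algebra, there exist a Hilbert space $Y$, a (necessarily non-degenerate, after compressing to the essential subspace as in the proof of Lemma \ref{19011601}) $\ast$-representation $r : A \to \mathcal{O}(Y)$, and bounded operators $V, W : X \to Y$ such that $f(a) = W^{\ast} r(a) V$ for all $a \in A$, with $\|f\|_{\rm cb} = \|W\|\,\|V\|$. Since $r$ is non-degenerate, it admits a unique extension to a unital $\ast$-representation $\bar r : M(A) \to \mathcal{O}(Y)$. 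I then define
\begin{equation*}
\widetilde{f}(a) := W^{\ast} \bar r(a) V \ \ \ \ (a \in M(A)).
\end{equation*}

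For existence, the map $\widetilde{f}$ is obviously linear, extends $f$, and satisfies $\|\widetilde{f}\|_{\rm cb} \leq \|W\|\,\|V\| = \|f\|_{\rm cb}$. Because $(W,V,\bar r, Y)$ is a representation of $\widetilde f$ with $\bar r|_A = r$ non-degenerate, Lemma \ref{19011601} gives that $\widetilde{f}$ is an $(S_A)$-map on $M(A)$, so $\widetilde{f}$ is an $(S_{A,A})$-extension of $f$ in the required sense.

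To establish formula (\ref{19013001}), let $a \in M(A)$ and let $\{a_i\}_{i \in I} \subset A$ be norm-bounded with $a_i \to a$ strictly. The standard fact that a non-degenerate $\ast$-representation of $A$ extends to $M(A)$ continuously from the strict topology on bounded sets to the strong$^{\ast}$ topology on $\mathcal{O}(Y)$ gives $\bar r(a_i) \to \bar r(a)$ strong$^{\ast}$. Composing with the bounded operators $V$ and $W^{\ast}$ preserves strong$^{\ast}$ convergence on bounded sets, so $f(a_i) = W^{\ast} r(a_i) V \to W^{\ast} \bar r(a) V = \widetilde{f}(a)$ in the strong$^{\ast}$ topology of $\mathcal{O}(X)$, which is (\ref{19013001}).

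For uniqueness, suppose $g$ is any $(S_{A,A})$-extension of $f$ on $M(A)$. Given $a \in M(A)$, pick any bounded approximate unit $\{e_\lambda\}$ of $A$; then $ae_\lambda \in A$ is a norm-bounded net converging to $a$ strictly. By the strict-to-strong$^{\ast}$ continuity on bounded sets built into the $(S_A)$ definition we have $g(a) = \lim_\lambda g(ae_\lambda) = \lim_\lambda f(ae_\lambda) = \widetilde{f}(a)$ in strong$^{\ast}$ topology, so $g = \widetilde f$. The only subtle step is the strict-to-strong$^{\ast}$ continuity of the canonical extension $\bar r$ of a non-degenerate $\ast$-representation; this is standard (it follows directly from $\bar r(a)r(b)\xi = r(ab)\xi$ for $b \in A$, $\xi \in Y$, combined with non-degeneracy of $r$) and I expect to dispatch it in one line rather than constituting a real obstacle.
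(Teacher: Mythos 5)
Your proof is correct and follows essentially the same route as the paper: take a non-degenerate representation $(W,V,r,Y)$ of $f$, extend $r$ to $M(A)$, set $\widetilde f(a)=W^{\ast}\bar r(a)V$, and invoke Lemma \ref{19011601} for the $(S_A)$ property. You merely spell out the formula \eqref{19013001} and the uniqueness via a bounded approximate unit, which the paper dismisses as obvious from the strict density of $A$ in $M(A)$.
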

\begin{proof}
Let $(W, V, r, Y)$ be non-degenerate representation of $f$, then we extend $r$ to non-degenerate $^{\ast}$-representation of $M(A)$ which we still denote by $r$, and define $\widetilde{f}: M(A) \to \mathcal{O}(X)$ by
\begin{equation*}
\widetilde{f}(a)=W^{\ast}r(a)V \ \ \ \ (a \in M(A)).
\end{equation*}
Then $\widetilde{f}$ is extension of $f$ and by Proposition \ref{19011601} $\widetilde{f}$ is $(S_A)$-map on $M(A)$. Furthermore, since $A$ is dense in $M(A)$ in the strict topology, then $(\ref{19013001})$ is obvious.
\end{proof}

The following corollary contains an alternative and elementary proof of non-unital version of Stinepring's Theorem:
\begin{corollary}\label{19041501}
Let $f: A \to \mathcal{O}(X)$ be a completely positive map, then

(i) f has a unique $(S_{A,A})$-extension $\widetilde{f}$ which is completely positive; 

(ii) f  has representation $(V, R)$;

(iii) let $\{a_i\}_{i \in I}$ be any approximate unit of $A$ which satisfies $\|a_i\| \leq 1$, then we have
\begin{equation*}
\|f\|_{\rm{cb}}={\rm{lim}}_{i \to \infty}\|f(a_i)\|=\|\widetilde{f}\|_{{\rm{cb}}}.
\end{equation*}
\end{corollary}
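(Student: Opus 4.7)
The plan is to derive (i), (ii), (iii) in that order, using Corollary \ref{190130} to obtain the CB extension and then invoking the classical unital Stinespring theorem for $\widetilde{f}$ on the unital $C^{\ast}$-algebra $M(A)$. Since any completely positive map is automatically completely bounded, Corollary \ref{190130} immediately produces the unique $(S_{A,A})$-extension $\widetilde{f}: M(A)\to\mathcal{O}(X)$, and the content of (i) is to upgrade $\widetilde{f}$ from CB to CP. I would fix $n\geq 1$ and $b\in M_n(M(A))^{+}$, choose a positive contractive approximate unit $\{a_i\}$ of $A$, set $u_i=a_i\otimes I_n$, and form $b_i:=u_i^{1/2}\,b\,u_i^{1/2}$. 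Since $M_n(A)$ is a closed two-sided ideal of $M_n(M(A))$, each $b_i$ lies in $M_n(A)^{+}$; using that $\{a_i^{1/2}\}$ is itself an approximate unit, one checks $b_i\to b$ strictly in $M_n(M(A))$ with $\|b_i\|\leq \|b\|$. The matricial amplification of the $(S_A)$-continuity of $\widetilde{f}$ (inherited entrywise) then gives $\widetilde{f}^{(n)}(b_i)\to \widetilde{f}^{(n)}(b)$ in the strong$^{\ast}$ topology, and since $\widetilde{f}^{(n)}(b_i)=f^{(n)}(b_i)\geq 0$, the limit is also positive, proving (i).

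For (ii), I would apply the classical (unital) Stinespring theorem to $\widetilde{f}$ on the unital $C^{\ast}$-algebra $M(A)$ to obtain a Hilbert space $K$, a $\ast$-representation $\pi:M(A)\to\mathcal{O}(K)$, and a bounded $V_0:X\to K$ with $\widetilde{f}(a)=V_0^{\ast}\pi(a)V_0$. Restriction to $A$ gives $f(a)=V_0^{\ast}\pi(a)V_0$. To obtain a Stinespring $(V,R)$ with $R$ acting non-degenerately on $A$ (which is what I will use in (iii)), I would compress to the $\pi$-invariant subspace $K_0:=\overline{\pi(A)K}$: letting $E$ be the projection onto $K_0$, the ideal property of $A$ in $M(A)$ gives $E\pi(a)=\pi(a)=\pi(a)E$ for $a\in A$, so $V:=EV_0$ and $R(a):=\pi(a)|_{K_0}$ satisfy $f(a)=V^{\ast}R(a)V$ with $R$ non-degenerate.

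For (iii), non-degeneracy of $R$ forces $R(a_i)\to I_{K_0}$ strongly, so $f(a_i)=V^{\ast}R(a_i)V$ is a positive operator dominated by $V^{\ast}V=\widetilde{f}(1_{M(A)})$ and $\langle f(a_i)\xi,\xi\rangle\to\|V\xi\|^{2}$ for every $\xi\in X$. Combining this pointwise convergence with the identity $\|T\|=\sup_{\|\xi\|=1}\langle T\xi,\xi\rangle$ valid for positive $T$ yields $\|f(a_i)\|\to\|V\|^{2}$. Since $\|\widetilde{f}\|_{\rm cb}=\|\widetilde{f}(1_{M(A)})\|=\|V\|^{2}$ for a CP map on a unital $C^{\ast}$-algebra, and since restriction gives $\|f\|_{\rm cb}\leq\|\widetilde{f}\|_{\rm cb}$ while $\|a_i\|\leq 1$ gives $\|f\|_{\rm cb}\geq\|f(a_i)\|$ for every $i$, passing to the limit collapses all three quantities to $\|V\|^{2}$. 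The hardest step will be the positivity upgrade in (i): both the entrywise verification of strict convergence $b_i\to b$ in $M_n(M(A))$ and the correct handling of positivity across a merely strong$^{\ast}$ (not norm) limit. Once (i) is secured, (ii) reduces to classical Stinespring plus a standard compression to the essential subspace, and (iii) is the routine norm-limit computation above.
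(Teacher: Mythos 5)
Your proof is correct and follows the same overall route as the paper: obtain the $(S_{A,A})$-extension from Corollary \ref{190130}, verify that it is completely positive, apply the unital Stinespring theorem to $\widetilde{f}$ on $M(A)$, and deduce the norm identity from an approximate unit. Two points of divergence in execution are worth recording. In (i) the paper merely asserts that complete positivity of $\widetilde{f}$ is obvious from the limit formula $\widetilde{f}(a)=\lim_i f(a_i)$; your compression $b_i=u_i^{1/2}\,b\,u_i^{1/2}$ of a positive $b\in M_n(M(A))$ into $M_n(A)^{+}$, together with the entrywise strict convergence $b_i\to b$ on bounded sets and the preservation of positivity under strong$^{\ast}$ limits, is precisely the detail the paper omits, so your version of (i) is the more complete one. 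In (iii) the paper argues directly from the strict-to-strong$^{\ast}$ continuity, via $\|\widetilde{f}(1_{M(A)})\|\leq\underline{\lim}_i\|\widetilde{f}(a_i)\|\leq\|\widetilde{f}\|_{\mathrm{cb}}=\|\widetilde{f}(1_{M(A)})\|$, without touching the dilation, whereas you route through $V$ and the non-degeneracy of $R$; both work, but your identification $V^{\ast}V=\widetilde{f}(1_{M(A)})$ after compressing $V_0$ to the essential subspace $K_0$ is not automatic from the unital Stinespring form alone (a priori $\widetilde{f}(1_{M(A)})=V_0^{\ast}V_0$ while $V^{\ast}V=V_0^{\ast}EV_0$). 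It does hold, and the one-line justification is that strict-to-strong$^{\ast}$ continuity forces $\widetilde{f}(1_{M(A)})=\mathrm{s}^{\ast}\text{-}\lim_i f(a_i)=V_0^{\ast}EV_0$, whence $V_0^{\ast}(1-E)V_0=0$ and $(1-E)V_0=0$; you should add that line, after which all three quantities collapse to $\|V\|^{2}$ as you claim.
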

\begin{proof}
$(i)$: By Corollary \ref{190130}, $f$ has unique $(S_{A,A})$-extension on $M(A)$. Furthermore, by $(\ref{19013001})$ $\widetilde{f}$ is completely positive, $(i)$ is proved. 

$(ii)$: Since $M(A)$ is unital, we could apply  the unital version Stinepring's Theorem to $\widetilde{f}$ to get representation $(V,R)$, then we have
\begin{equation*}
f(a)=V^{\ast}R(a) V \ \ \ \ (a \in A).
\end{equation*}
Therefore $(V,R|_A)$ is a representation of $f$, $(ii)$ is proved.

$(iii)$: Let $\{a_i\}_{i \in I}$ be an approximate unit of $A$, then since $\widetilde{f}$ is $(S_A)$-map on $M(A)$ and completely positive, we have
\begin{equation}\label{19013002}
\widetilde{f}(1_{M(A)})(\xi)={\rm{lim}}_{i \to \infty}\widetilde{f}(a_i)(\xi) \ \ \ \ (\xi \in X),
\end{equation}
it implies that
\begin{equation}\label{jfnvsfdnvker}
\|\widetilde{f}(1_{M(A)})\| \leq  \underline{{\rm{lim}}}_{\, i \to \infty} \|\widetilde{f}(a_i)\|
\end{equation}
On the other hand we notice that $\|\widetilde{f}(a_i)\| \leq \|\widetilde{f}\|_{\rm{cb}}=\|\widetilde{f}(1_{M(A)})\|$, so (\ref{jfnvsfdnvker}) implies that $\|\widetilde{f}\|_{\rm{cb}}=\|f\|_{\rm{cb}}$.
our proof is complete.
\end{proof}

\begin{corollary}\label{19012801}
If $C \subset A$ and  $f: C \to \mathcal{O}(X)$ is completely bounded, then f is $(S_{C,A})$-extendable map. Furthermore, if $f$ is completely positive, then the extension of $f$ may be chosen to be completely positive.
\end{corollary}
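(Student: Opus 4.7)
The plan is to realize the $(S_{C,A})$-extension in two steps: first extend $f$ from $C$ to all of $A$ as a completely bounded (respectively completely positive) map, and then appeal to the corollaries already established to push this extension further to an $(S_A)$-map on the multiplier algebra $M(A)$.

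For the first step, in the completely bounded case, the Wittstock--Paulsen extension theorem applied to the inclusion of operator spaces $C \subset A$ produces a completely bounded $\hat{f} : A \to \mathcal{O}(X)$ extending $f$, with the same completely bounded norm. In the completely positive case I would accommodate the possible non-unitality of $A$ by working inside the unital $C^{\ast}$-algebra $M(A)$: define the operator system $\mathcal{S} = C + \mathbb{C} \cdot 1_{M(A)} \subset M(A)$ and extend $f$ first to $\mathcal{S}$ by
\begin{equation*}
f^{\sharp}(c + \lambda 1_{M(A)}) = f(c) + \lambda Q, \quad c \in C,\ \lambda \in \mathbb{C},
\end{equation*}
where $Q := \lim_{i} f(e_i)$ (strong-operator limit) along an increasing approximate unit $\{e_i\}$ of $C$. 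Corollary \ref{19041501}(iii) guarantees that $\{f(e_i)\}$ is a bounded increasing net of positive operators with $\|f(e_i)\| \leq \|f\|_{\mathrm{cb}}$, so $Q$ exists as a positive element of $\mathcal{O}(X)$; a routine matrix amplification verifies that $f^{\sharp}$ is completely positive on $\mathcal{S}$. Arveson's extension theorem applied to $\mathcal{S} \subset M(A)$ then yields a completely positive extension to $M(A)$, whose restriction to $A$ is the required $\hat{f}$.

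For the second step, I apply Corollary \ref{190130} (respectively Corollary \ref{19041501}(i) in the completely positive case) to $\hat{f} : A \to \mathcal{O}(X)$ to obtain its unique $(S_{A,A})$-extension $\widetilde{\hat{f}} : M(A) \to \mathcal{O}(X)$, which is by construction an $(S_A)$-map on $M(A)$ and inherits complete positivity from $\hat{f}$. Since $\widetilde{\hat{f}}|_{A} = \hat{f}$ and $\hat{f}|_{C} = f$, we have $\widetilde{\hat{f}}|_{C} = f$, so $\widetilde{\hat{f}}$ is an $(S_A)$-extension of $f$; this shows $f$ is $(S_{C,A})$-extendable, and in the completely positive case the produced extension is itself completely positive.

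The main obstacle I foresee is the non-unital completely positive case. Arveson's theorem is most naturally stated for operator systems in a unital $C^{\ast}$-algebra, so one must carefully identify the operator system $\mathcal{S} \subset M(A)$ and force the value of $f^{\sharp}$ at $1_{M(A)}$ via the strong limit $Q$ in a way that actually preserves complete positivity. Once this point is handled, the previously developed machinery (Corollaries \ref{190130} and \ref{19041501}) takes care of extending further to all of $M(A)$ as an $(S_A)$-map, and no further delicate estimates are needed.
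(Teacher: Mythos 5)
Your proposal is correct and follows essentially the same two-step route as the paper: extend $f$ from $C$ to $A$ (Wittstock in the completely bounded case, a unitization-plus-Arveson argument in the completely positive case) and then invoke Corollaries \ref{190130} and \ref{19041501} to obtain the $(S_A)$-extension on $M(A)$. The only divergence is how the unit is handled in the completely positive case: the paper takes the Stinespring representation $(V,r)$ of $f$ from Corollary \ref{19041501}, extends $r$ to the unitization of $C$ and sets $g'(\cdot)=V^{\ast}\widetilde{r}(\cdot)V$ so that complete positivity is automatic, whereas you force the value at $1_{M(A)}$ to be the strong limit $Q=\lim_i f(e_i)$ and verify complete positivity by amplification --- these yield the same map on $C+\mathbb{C}\,1_{M(A)}$, since $V^{\ast}\widetilde{r}(1)V=V^{\ast}V=\lim_i V^{\ast}r(e_i)V=Q$ strongly.
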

\begin{proof}
We first extend $f$ to a completely bounded map from $A$ into $\mathcal{O}(Y)$, then by Corollary \ref{190130} we conclude that $f$ has $(S_{C,A})$-extension.

Now suppose that $f$ is completely positive. By Corollary \ref{19041501} we have non-degenerate representation $(V,r)$ of $f$. Let extend $r$ to non-degenerate $\ast$-representation $\widetilde{r}$ of
 $C^{\ast}$-algebra $C'=\{b + t 1_{M(A)}: b \in B, t  \in \mathbb{C}\}^{-\|\cdot\|_{M(A)}}$. Let define $g': C' \to \mathcal{O}(X)$ by
\begin{equation*}
g'(a)=V^{\ast}\widetilde{r}(a)V \ \ \ \ (a \in C'),
\end{equation*}
then $g'$ is completely positive. Now since $B'$ is a $C^{\ast}$-subalgebra of $A'=\{a +t 1_{M(A)}: a \in A; t \in \mathbb{C}\}^{-\|\cdot\|_{M(A)}}$ such that $C'$ contains the unit of $A'$ (i.e $1_{M(A)}$), we can extend $g'$ to a completely positive map $g$ from $A'$ into $\mathcal{O}(X)$. Then $g|A$ is a completely positive map which extends $f$, by Corollary \ref{19041501} our proof is complete.
\end{proof}

Combine Corollary \ref{190130} and Corollary \ref{19012801} we have

\begin{corollary}\label{hfdiusahufs}
If $C \subset A$ contains an approximate unit $\{b_i\}_{i \in I}$ of $A$ which satisfies $\|b_i\| \leq 1$ $(i \in I)$, then for any completely positive function $f: C \to \mathcal{O}(X)$ we have
\begin{equation*}
\|f\|_{\rm{cb}}={\rm{lim}}_{i \to \infty} \|f(b_i)\|
\end{equation*}
\end{corollary}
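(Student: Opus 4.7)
The plan is to reduce the statement to Corollary \ref{19041501}(iii) by first extending $f$ to a completely positive map on all of $A$. Specifically, by Corollary \ref{19012801}, since $f \colon C \to \mathcal{O}(X)$ is completely positive, it admits a completely positive extension $g \colon A \to \mathcal{O}(X)$. The approximate unit $\{b_i\}_{i \in I}$ of $A$ is by assumption contained in $C$ and satisfies $\|b_i\| \le 1$, so Corollary \ref{19041501}(iii) applies to $g$ and yields
\begin{equation*}
\|g\|_{\mathrm{cb}} = \lim_{i \to \infty} \|g(b_i)\|.
\end{equation*}

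Next I would compare $\|f\|_{\mathrm{cb}}$ with $\|g\|_{\mathrm{cb}}$ by a sandwich argument. On one hand, since $g$ extends $f$, the inequality $\|f\|_{\mathrm{cb}} \le \|g\|_{\mathrm{cb}}$ is immediate from the definition of the completely bounded norm (restricting the supremum from $M_n(A)$ to $M_n(C)$ can only decrease it). On the other hand, the net $\{b_i\}$ lies in $C$, so $g(b_i) = f(b_i)$, and thus
\begin{equation*}
\|g\|_{\mathrm{cb}} = \lim_{i \to \infty}\|g(b_i)\| = \lim_{i \to \infty}\|f(b_i)\| \le \|f\|_{\mathrm{cb}},
\end{equation*}
where the last inequality uses that $\|f(b_i)\| \le \|f\|_{\mathrm{cb}}\|b_i\| \le \|f\|_{\mathrm{cb}}$ for all $i$. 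Combining the two bounds gives $\|g\|_{\mathrm{cb}} = \|f\|_{\mathrm{cb}}$ and hence the desired equality $\|f\|_{\mathrm{cb}} = \lim_{i}\|f(b_i)\|$.

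There is essentially no technical obstacle: once the CP extension is produced via Corollary \ref{19012801}, everything reduces to the already established formula in Corollary \ref{19041501}(iii) together with the trivial observation that a completely bounded extension has norm at least that of the restriction. The only point requiring a moment of care is that the approximate unit for $A$ used in Corollary \ref{19041501}(iii) may be chosen to be the given $\{b_i\}_{i \in I}$, which is permitted because that corollary is stated for an arbitrary approximate unit bounded by $1$.
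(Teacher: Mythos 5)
Your argument is correct and follows essentially the same route as the paper: both obtain a completely positive extension of $f$ via Corollary \ref{19012801} and then reduce to the approximate-unit formula of Corollary \ref{19041501}(iii). Your write-up is in fact a little more explicit than the paper's one-line proof, since the sandwich $\lim_{i}\|f(b_i)\| \le \|f\|_{\mathrm{cb}} \le \|g\|_{\mathrm{cb}} = \lim_{i}\|g(b_i)\| = \lim_{i}\|f(b_i)\|$ is precisely the justification the paper leaves implicit when it asserts $\|\widetilde{f}\|_{\mathrm{cb}}=\|f\|_{\mathrm{cb}}$.
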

\begin{proof}
Let $\widetilde{f}$ be a $(S_{B,A})$-extension of $f$ on $M(A)$ (the existence is proved by Corollary \ref{19012801}), then by Corollary \ref{19041501} we have $\|\widetilde{f}\|_{\rm{cb}}=\|f\|_{\rm{cb}}$, our proof is complete.
\end{proof}

\bigskip

In the rest of this section, we will apply our definitions and propositions to $A= \mathcal{O}_c(\mathfrak{L}_2(G)) \otimes \rho_{{\rm{int}}}(C^{\ast}(\mathfrak{B}))$, and $C=\mathfrak{E}(\rho, \mathfrak{B})$. Recall that if $G$ is non-discrete, both $A$ and $C$ are separable $C^{\ast}$-algebra by Remark \ref{cvnhjfuk}. 
\begin{proposition}\label{19011801}
 Let $f: C \to \mathcal{O}(\mathfrak{L}_2(G,X(\rho)))$ be a completely bounded map. Consider the conditions:

(i) $f$ is $(S_{C,A})$-extendable;

(ii) $f$ has non-degenerate representation $(W,V,R)$ such that 
\begin{equation*}
R: C \to \mathfrak{E}(r, \mathfrak{B}), T_k^{\rho} \mapsto T_k^r \ \ \ \ (k \in \mathfrak{L}_2(\mathfrak{B} \times \mathfrak{B}))
\end{equation*}
for some $\ast$-representation $r$ of $\mathfrak{B}$ which is weakly contained in $\rho$. 

Then we have $(ii) \Rightarrow (i)$, and if either $G$ is discrete or $X(\rho)$ is separable we have $(i) \Rightarrow (ii)$. Furthermore, if $f$ is completely positive, we can choose $V=W$. 
\end{proposition}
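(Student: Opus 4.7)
The strategy is to reduce each direction to Lemma~\ref{19011401}, which promotes $\ast$-homomorphisms of the form $T_k^{\rho}\mapsto T_k^r$ to $\ast$-homomorphisms $\Xi_{\rho,r}$ on all of $M(A)$, and to Lemma~\ref{19011601}, which characterises $(S_A)$-maps by the existence of non-degenerate representations.

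For $(ii)\Rightarrow (i)$, given the non-degenerate representation $(W,V,R,\mathfrak{L}_2(G,X(r)))$ of $f$, Lemma~\ref{19011401} identifies $R$ with the restriction to $C$ of the $\ast$-homomorphism $\Xi_{\rho,r}:M(A)\to M(\mathcal{O}_c(\mathfrak{L}_2(G))\otimes r_{\rm{int}}(C^{\ast}(\mathfrak{B})))$. I would then define
\begin{equation*}
\widetilde{f}(x):=W^{\ast}\Xi_{\rho,r}(x)V\qquad (x\in M(A)),
\end{equation*}
giving a completely bounded extension of $f$ to $M(A)$. Since $\Xi_{\rho,r}|_A=\mathrm{id}\otimes S$ (with $S(\rho_{\rm{int}}(a))=r_{\rm{int}}(a)$) is a non-degenerate $\ast$-representation of $A$, Lemma~\ref{19011601} immediately yields that $\widetilde{f}$ is an $(S_A)$-map, whence $f$ is $(S_{C,A})$-extendable.

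For $(i)\Rightarrow (ii)$, let $\widetilde{f}$ be an $(S_{C,A})$-extension of $f$. Lemma~\ref{19011601} produces a non-degenerate representation $(W',V',r',Y')$ of $\widetilde{f}$ with $r'|_A$ non-degenerate, and with $Y'$ separable whenever $A$ and $\mathfrak{L}_2(G,X(\rho))$ are. The substantive step is to decompose $r'|_A$ by exploiting the tensor factorisation $A=\mathcal{O}_c(\mathfrak{L}_2(G))\otimes\rho_{\rm{int}}(C^{\ast}(\mathfrak{B}))$: after extending $r'$ strictly-to-strong$^{\ast}$ to $M(A)$, its restriction to $\mathcal{O}_c(\mathfrak{L}_2(G))\otimes 1$ is a non-degenerate $\ast$-representation of the compact operators $\mathcal{O}_c(\mathfrak{L}_2(G))$, which by the standard structure theorem is unitarily equivalent to $\mathrm{id}_{\mathfrak{L}_2(G)}\otimes I_{Y_0}$ on $\mathfrak{L}_2(G)\otimes Y_0$. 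A commutant argument then yields a non-degenerate $\ast$-representation $\pi$ of $\rho_{\rm{int}}(C^{\ast}(\mathfrak{B}))$ on $Y_0$ with $r'(K\otimes\rho_{\rm{int}}(a))=K\otimes\pi(\rho_{\rm{int}}(a))$. Setting $r_{\rm{int}}:=\pi\circ\rho_{\rm{int}}$ identifies a $\ast$-representation $r$ of $\mathfrak{B}$ on $Y_0$ which is weakly contained in $\rho$; by construction $r'|_A=\Xi_{\rho,r}|_A$, and uniqueness of the strict-to-strong$^{\ast}$ extension to $M(A)$ forces $r'|_C=\Xi_{\rho,r}|_C$, so $r'(T_k^{\rho})=T_k^r$ by (\ref{nvjhdffhugv}). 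Taking $(W,V,R):=(W',V',r'|_C)$ supplies the representation required by $(ii)$. In the completely positive case, applying Corollary~\ref{19041501} to $\widetilde{f}$ yields a Stinespring representation with $V'=W'$, a property that is preserved under the above construction.

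The main obstacle is the decomposition $r'|_A\cong\mathrm{id}\otimes\pi$ and the identification of $\pi$ with a rep that factors through $\rho_{\rm{int}}$; this depends on the structure theory of representations of the compact operators together with a commutant argument to extract $\pi$ on the multiplicity space $Y_0$. The hypothesis that either $G$ is discrete or $X(\rho)$ is separable is precisely what ensures that $Y_0$ can be realised as the Hilbert space of a genuine $\ast$-representation of $\mathfrak{B}$ weakly contained in $\rho$, so that $\Xi_{\rho,r}$ is defined and the identification $r'|_C=\Xi_{\rho,r}|_C$ makes sense.
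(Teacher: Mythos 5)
Your argument is correct and follows essentially the same route as the paper: extend $f$ to an $(S_A)$-map on $M(A)$, obtain a non-degenerate representation via Lemma~\ref{19011601}, decompose its restriction to $A=\mathcal{O}_c(\mathfrak{L}_2(G))\otimes\rho_{\rm{int}}(C^{\ast}(\mathfrak{B}))$ as an amplification tensored with a representation factoring through $\rho_{\rm{int}}$, and then identify the action on $C$ by non-degeneracy and density of $\mathfrak{K}(\mathfrak{B})$. The only difference is that where you sketch the compact-operator structure theorem and commutant argument to produce $\pi$ on the multiplicity space, the paper simply invokes \cite[Lemma 2.5]{MTT16} for that decomposition.
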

\begin{proof}
$(i) \Rightarrow (ii)$ if $X(\rho)$ is separable:  We denote the extension of $f$ on $M(A)$ still by $f$. By Lemma \ref{19011601}, $f$ has non-degenerate representation $(V',W',S, Y)$,  such that $S|_A$ is non-degenerate $\ast$-representation of $A$, and by Lemma \ref{19011601} again,  $Y$ can be chosen to be separable.  By \cite[Lemma 2.5]{MTT16}, there is a separable Hilbert space $Z$, unitary operator $U: Y \to  \mathfrak{L}_2(G, Z)$ and non-degenerate $\ast$-representation $r': \rho_{{\rm{int}}}(C^{\ast}(\mathfrak{B})) \to \mathcal{O}(\mathfrak{L}_2(G,Z))$, such that
\begin{equation*}
S(T^{\rho_{\rm{int}}}_h)=U^{\ast} T^{r'\circ \rho_{\rm{int}}}_h U \ \ \ \  (h \in \mathfrak{L}_2(G \times G, C^{\ast}(\mathfrak{B})).
\end{equation*}
It is clear that $r' \circ \rho_{\rm{int}}: C^{\ast}(\mathfrak{B})\to \mathcal{O}(Z)$ is non-degenerate, thus $r' \circ \rho_{\rm{int}}$ is the integrated form of a $\ast$-representation of $\mathfrak{B}$ on $Z$ which we denote by $r: B \to \mathcal{O}(Z)$. Thus $r$ is weakly contained in $\rho$. Now for fixed $k \in \mathfrak{L}(\mathfrak{B} \times \mathfrak{B})$, by Lemma \ref{maintool} we have
\begin{equation*}\begin{split}
(U^{\ast}T_k^r U) \circ S(T_h^{\rho_{{\rm{int}}}})&=(U^{\ast}T_k^r U) \circ (U^{\ast} T_h ^{r' \circ \rho_{\rm{int}}} U)
\\&=U^{\ast} T_{k \star h}^{r' \circ \rho_{\rm{int}}} U  \ \ \ \ \ \ \  (h \in \mathfrak{K}({\mathfrak{B} })).
\end{split}\end{equation*}
On the other hand, by Lemma \ref{maintool} again, we have 
\begin{equation*}
S(T_k^{\rho}) S(T_h^{\rho_{{\rm{int}}}})=S(T_{k \star h}^{\rho_{{\rm{int}}}})=U^{\ast} T^{r' \circ \rho_{\rm{int}}}_{k \star h} U  \ \ \ \ (h \in \mathfrak{K}({\mathfrak{B} })).
\end{equation*}
Since $\mathfrak{K}({\mathfrak{B} })$ is dense in $A$ and $S|_A$ is non-degenerate $\ast$-representation of $A$, we conclude that
\begin{equation*}\label{ejdffgi}
S(T_k^{\rho})=U^{\ast}T_k^r U \ \ \ \ (k \in \mathfrak{L}(\mathfrak{B} \times \mathfrak{B})).
\end{equation*}
Now let $W=UW'$, $V=UV'$, and define $^{\ast}$-representation $R: \mathfrak{E}(\rho, \mathfrak{B}) \to \mathfrak{E}(r, \mathfrak{B})$ by $R(T_k^{\rho})=T_k^r$ ($k \in \mathfrak{L}_2(\mathfrak{B} \times \mathfrak{B})$) (see Lemma \ref{19011401}), since $\{T^{\rho}_k: k \in  \mathfrak{L}_2(\mathfrak{B} \times \mathfrak{B})\}$ is dense in $\mathfrak{E}(\rho, \mathfrak{B})$, by (\ref{ejdffgi}) we have
\begin{equation*}
f(a)=W^{\ast} R(a) V \ \ \ \ (a \in \mathfrak{E}(\rho, \mathfrak{B})).
\end{equation*}
$(i)  \Rightarrow (ii)$ is proved.

By the similar argument, we can prove that $(i) \Rightarrow (i)$ if $G$ is discrete.

$(ii) \Rightarrow (i)$: Since $r$ is weekly contained in $\rho$, then by Lemma \ref{19011401} the following map 
\begin{equation*}
S: T_h^{\rho_{{\rm{int}}}} \mapsto T_h^{r_{{\rm{int}}}} \ \ \ \ (h \in \mathfrak{L}_2(G \times G, C^{\ast}(\mathfrak{B})))
\end{equation*}
is $\ast$-homomorphism from $A$ onto $\mathcal{O}_c(\mathfrak{L}_2(G)) \otimes r_{{\rm{int}}}(C^{\ast}(\mathfrak{B}))$. Since $S$ is a non-degenerate, we define $F: M(A) \to \mathcal{O}(\mathfrak{L}_2(G,X))$ by
\begin{equation*}
F(a)=W^{\ast} S(a) V \ \ \ \ (a \in M(A)).
\end{equation*}
By Lemma \ref{19011601} $F$ is $(S_A)$-map on $M(A)$, and $F|A$ is extension of $f$. 
\end{proof}

For any $a \in \mathfrak{L}_{\infty}(G)$, we define $\overline{a} \in \mathfrak{L}_{\infty}(G)$ by $\overline{a}(s)=\overline{a(s)}$ for all $s \in G$. 

For each $a \in \mathfrak{L}_{\infty}(G)$ we associate an operator $M_a: \mathfrak{L}_2(G) \to \mathfrak{L}_2(G)$ which is defined by
\begin{equation}
M_a(\xi)(s)=a(s) \xi(s) \ \ \ \ (\xi \in \mathfrak{L}_2(G); s \in G ),
\end{equation}
it is clear that $M_a^{\ast}=M_{\overline{a}}$, thus $\mathfrak{M}(G)=\{M_a: a \in \mathfrak{L}_{\infty}(G)\}$ is a concrete $C^{\ast}$-algebra acting on Hilbert space $\mathfrak{L}_2(G)$. 

 For $k \in \mathfrak{L}_2(\mathfrak{B} \times \mathfrak{B})$ we define
\begin{equation*}
a \cdot k(s,t)=a(s) k(s,t), k \cdot a (s,t)=k(s,t) a(t)
\end{equation*}
for $a \in \mathfrak{L}_{\infty}(G)$ and $s,t \in G$. Then it is easy to verify that $a\cdot k$, $k \cdot a$ are in $\mathfrak{L}_2(\mathfrak{B} )$ and For any $k \in \mathfrak{L}_2(\mathfrak{B} \times \mathfrak{B})$ and $a,b \in \mathfrak{L}_{\infty}(G)$, 
\begin{equation*}
(M_a \otimes I_X) T_k^{\rho} (M_b \otimes I_X)=T^{\rho}_{a \cdot k \cdot b}, 
\end{equation*}
and if $\Phi$ is Schur $(\mathfrak{B} , \rho)$-multiplier, we have (recall that $X$ is the Hilbert space on which $\rho$ is acting)
\begin{equation*}
S_{\rho,\Phi}((M_a \otimes I_X) x (M_b \otimes X))=(M_a \otimes I_X) S_{\rho,\Phi}(x) (M_a \otimes I_X)
\end{equation*}
for all $  \in \mathfrak{E}(\rho, \mathfrak{B} ) $ and $a,b \in \mathfrak{L}_{\infty}(G)$.

\begin{theorem}\label{19012502}
Let $\Phi: D \to D$ be a multiplier of $\mathfrak{B} \times \mathfrak{B}$, and consider the following two statements:

(i) $\Phi$ is  Schur $(\mathfrak{B} , \rho)$-multiplier such that $S_{\rho,\Phi}$ is $(S_{C,A})$-extendable;

(ii) There is non-degenerate $\ast$-representation $r: B \to \mathcal{O}(Y)$ of $\mathfrak{B}$ on some Hilbert space $Y$ which is weakly contained in $\rho$, and $V, W \in \mathfrak{L}_{\infty}(G, \mathcal{O}(X, Y))$ such that
\begin{equation*}
\rho(\Phi((x,y; a))= W^{\ast}(x) r_D((x,y; a)) V(y) \ \ \ \ ((x,y; a) \in D_{x,y})
\end{equation*}
for almost all $(x,y) \in G \times G$.
Then we have $(ii) \Rightarrow (i)$, and if either $X(\rho)$ is separable or $G$ is discrete then we have $(i) \Rightarrow (ii)$.
\end{theorem}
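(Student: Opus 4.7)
My plan is to treat the two implications separately. The direction $(ii) \Rightarrow (i)$ is a direct computation producing a representation of $S_{\rho,\Phi}$; the direction $(i) \Rightarrow (ii)$ requires extracting the factorization by a covariance/disintegration argument.

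For $(ii) \Rightarrow (i)$, let $\tilde V : \mathfrak{L}_2(G, X) \to \mathfrak{L}_2(G, Y)$ and $\tilde W : \mathfrak{L}_2(G, X) \to \mathfrak{L}_2(G, Y)$ be the bounded decomposable operators defined by pointwise multiplication by $V(\cdot)$ and $W(\cdot)$. A direct calculation using Fubini and the factorization hypothesis gives
\begin{equation*}
T_{\Phi \cdot k}^{\rho} \;=\; \tilde W^{\ast}\, T_k^{r}\, \tilde V \qquad (k \in \mathfrak{L}_2(\mathfrak{B} \times \mathfrak{B})).
\end{equation*}
By Lemma \ref{19011401}, the map $R : T_k^{\rho} \mapsto T_k^{r}$ extends to a $\ast$-homomorphism from $\mathfrak{E}(\rho, \mathfrak{B})$ onto $\mathfrak{E}(r, \mathfrak{B})$, so $(\tilde W, \tilde V, R)$ is a representation of $S_{\rho,\Phi}$ of the type appearing in Proposition \ref{19011801}, and the easy implication of that proposition delivers $(S_{C,A})$-extendability.

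For $(i) \Rightarrow (ii)$, apply Proposition \ref{19011801} under the separability or discreteness assumption to obtain a non-degenerate representation $(W, V, R)$ of the extension of $S_{\rho, \Phi}$, where $R(T_k^{\rho}) = T_k^{r}$ for some $\ast$-representation $r$ of $\mathfrak{B}$ weakly contained in $\rho$; in particular $T_{\Phi \cdot k}^{\rho} = W^{\ast} T_k^{r} V$. The crucial structural input is the covariance $S_{\rho, \Phi}((M_a \otimes I_X)\, x\, (M_b \otimes I_X)) = (M_a \otimes I_X)\, S_{\rho, \Phi}(x)\, (M_b \otimes I_X)$ recorded before the theorem. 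Combined with $(M_a \otimes I) T_k^{\rho} (M_b \otimes I) = T_{a \cdot k \cdot b}^{\rho}$ and the analogous identity in $\mathfrak{E}(r, \mathfrak{B})$, this forces
\begin{equation*}
W^{\ast}(M_a \otimes I_Y) T_k^{r}(M_b \otimes I_Y) V \;=\; (M_a \otimes I_X)\, W^{\ast} T_k^{r} V\, (M_b \otimes I_X);
\end{equation*}
by density of $\{T_k^{r}: k \in \mathfrak{L}(\mathfrak{B} \times \mathfrak{B})\}$ and non-degeneracy of $R$, this yields $W^{\ast}(M_a \otimes I_Y) = (M_a \otimes I_X) W^{\ast}$ and $V(M_b \otimes I_X) = (M_b \otimes I_Y) V$ for all $a, b \in \mathfrak{L}_\infty(G)$.

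Commutation with $\mathfrak{L}_\infty(G) \otimes I$ expresses exactly that $W$ and $V$ are decomposable, so by the disintegration theorem (applicable because separability is available under either hypothesis, using Remark \ref{cvnhjfuk}) there exist essentially bounded measurable fields $W(\cdot), V(\cdot) \in \mathfrak{L}_\infty(G, \mathcal{O}(X, Y))$ realizing $W$ and $V$. Substituting these forms into $T_{\Phi \cdot k}^{\rho} = W^{\ast} T_k^{r} V$, evaluating on an arbitrary $\xi \in \mathfrak{L}_2(G, X)$, and varying $\xi$ over a countable dense subset produces, for each $k$ and almost every $(x,y)$, the equality $\rho_D(\Phi(k(x,y))) = W^{\ast}(x) r_D(k(x,y)) V(y)$. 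To upgrade from values of cross-sections to arbitrary fiber elements, I would choose a countable family of sections in $\mathfrak{L}(\mathfrak{B} \times \mathfrak{B})$ whose values are dense in each fiber $D_{x,y}$ (available since $D$ is second countable by Remark \ref{cvnhjfuk}) and apply continuity of $\Phi$, $\rho_D$, and $r_D$ on fibers to conclude the claimed pointwise factorization.

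I expect the main obstacle to be the disintegration step: passing from the intertwining identity involving $T_k^{r}$ to genuine commutation of $W^{\ast}$ and $V$ with the diagonal algebra, producing the measurable operator fields with the required essential bounds, and then controlling the exceptional null sets uniformly as $k$ ranges over the countable dense family. Separability, supplied either by discreteness of $G$ or by the second-countability hypothesis noted in Remark \ref{cvnhjfuk}, is precisely what makes this argument available and is the reason the implication $(i) \Rightarrow (ii)$ is restricted to those cases.
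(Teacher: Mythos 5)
Your overall route coincides with the paper's: both directions hinge on Proposition \ref{19011801} (the Stinespring-type representation $(W,V,R)$ with $R(T_k^{\rho})=T_k^{r}$), followed by intertwining with the diagonal algebra $\{M_a\otimes I\}$, disintegration of $W$ and $V$ into operator fields, and a countable dense family of sections to upgrade the almost-everywhere identity to all fiber elements. The $(ii)\Rightarrow(i)$ half is correct and is exactly the paper's computation.

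There is, however, one genuine gap in your $(i)\Rightarrow(ii)$ argument, precisely at the step you yourself flag as the main obstacle. From
\begin{equation*}
W^{\ast}(M_a \otimes I_Y)\, T_k^{r}\, (M_b \otimes I_Y) V = (M_a \otimes I_X)\, W^{\ast} T_k^{r} V\, (M_b \otimes I_X)
\end{equation*}
you claim that density of $\{T_k^{r}\}$ and non-degeneracy of $R$ yield $W^{\ast}(M_a\otimes I_Y)=(M_a\otimes I_X)W^{\ast}$ and $V(M_b\otimes I_X)=(M_b\otimes I_Y)V$. This deduction fails as stated: setting $M_b=I$ only gives $\bigl(W^{\ast}(M_a\otimes I_Y)-(M_a\otimes I_X)W^{\ast}\bigr)\,T_k^{r}V=0$, which forces the commutation relation only on the subspace $C=\overline{\mathrm{span}}\{T_k^{r}V\xi : k\in\mathfrak{L}_2(\mathfrak{B}\times\mathfrak{B}),\ \xi\in\mathfrak{L}_2(G,X)\}$. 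Non-degeneracy of $R$ on $\mathfrak{L}_2(G,Y)$ does not make $C$ all of $\mathfrak{L}_2(G,Y)$ unless the representation is minimal, which Proposition \ref{19011801} does not assert; on $C^{\perp}$ the operator $W$ can be perturbed arbitrarily without changing any $W^{\ast}T_k^{r}V$, so the commutation can genuinely fail there. The paper's fix is to introduce the projection $E$ onto $C$, observe that $E$ commutes with every $T_k^{r}$ and with every $M_d\otimes I_Y$ (since $C$ is invariant under both algebras), and replace $W$ by $EW$ (and likewise $V$), which leaves $W^{\ast}T_k^{r}V$ unchanged but now does satisfy the exact intertwining relation. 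You should insert this compression before invoking the disintegration theorem; the rest of your argument (measurable fields, the a.e.\ identity for a countable dense set of sections, fiberwise density and continuity) then goes through as in the paper.
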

(If $G$ is discrete, then by \cite[VIII.16.11]{MR936629} for any ${\ast}$-representation $\rho$ of $\mathfrak{B}$ we always have $\rho(B) \subset \rho_{{\rm{int}}}(C^{\ast}(\mathfrak{B}))$, thus we have
\begin{equation*}
 \mathfrak{E}(\rho, \mathfrak{B} ) \subset \mathcal{O}_c(\mathfrak{L}_2(G)) \otimes \rho_{\rm{int}}(C^{\ast}(\mathfrak{B})), 
 \end{equation*}
 Corollary \ref{19012801} implies that every completely bounded multipliers is 
 $(S_{C,A})$-extendable. Therefore, we can remove `$S_{\rho,\Phi}$ is $(S_{C,A})$-extendable' in $(i)$, and (i) and $(ii)$ are always equivalent.)

\begin{proof}
We prove that $(i) \Rightarrow (ii)$ if $X(\rho)$ is separable, the argument in the case that $G$ is discrete is similar. Since $S_{\rho,\Phi}$ is $(S_{C,A})$-extendable, by Proposition \ref{19011801} there are separable Hilbert space $Y$, non-degenerate $\ast$-representation $r: B \to \mathcal{O}(Y)$ which is weakly contained in $\rho$, and bounded operators $V, W$  in $\mathcal{O}(\mathfrak{L}_2(G, X), \mathfrak{L}_2(G,Y))$ such that
\begin{equation*}
{S_{\rho}},_{\Phi}(T^{\rho}_k)=W_0^{\ast} T_k^r V_0\ \ \ \ ( k \in \mathfrak{L}_2(\mathfrak{B} \times \mathfrak{B})).
\end{equation*}
Let $C=\overline{[\{T^r_k V_0 \mathfrak{L}_2(G,X): k \in \mathfrak{L}_2(\mathfrak{B} \times \mathfrak{B})\}]} \subset \mathfrak{L}_2(G, Y)$, and let $E$ be the prjection of $\mathfrak{L}_2(G,Y)$ onto $C$, then since $C$ is invariant under the concrete $C^{\ast}$-algebra $\mathfrak{E}(r, \mathfrak{B} )$ , we have $ET_k^r=T_k^rE$ for all $k \in \mathfrak{L}_2(\mathfrak{B} )$. Now for any $d \in \mathfrak{L}_{\infty}(G)$ and $k \in \mathfrak{L}_2(\mathfrak{B})$ we have
\begin{equation}\label{19041601}
(M_d \otimes I_Y)^{\ast}T_k^r=T_{\overline{d} \cdot k}^r \ \ \ \ (d \in \mathfrak{L}_{\infty}(G); k \in \mathfrak{L}_2(\mathfrak{B} \times \mathfrak{B}))
\end{equation}
and
\begin{equation}\begin{split}\label{19011802}
W_0^{\ast}T_{\overline{d} \cdot k} V_0=(M_d^{\ast} \otimes I_X) \ W	^{\ast}_0 T^r_k V_0.
\end{split}\end{equation}
Let $W=E W_0$, combine (\ref{19011802}) and (\ref{19041601}) we have
\begin{equation*}\begin{split}
\langle T_k^r V_0 \xi, (M_d \otimes I_Y) W \eta \rangle=\langle T_k^r V_0\xi, W(M_d \otimes I_X) \eta  \rangle
\end{split}\end{equation*}
for all $k  \in \mathfrak{L}_2(\mathfrak{B})$ and $\xi, \eta \in \mathfrak{L}_2(G, X)$. We conclude that
\begin{equation*}
E(M_d \otimes I_Y)W=EW(M_d \otimes I_X) \ \ \ \ (d \in \mathfrak{L}_{\infty}(G)).
\end{equation*}
Furthermore,  $C$ is stable under the action of the $C^{\ast}$-algebra $\{M_d \otimes I_Y: d \in \mathfrak{L}_{\infty}(G)\}$, thus we have
\begin{equation*}
E(M_d \otimes I_Y)=(M_d \otimes I_Y) E \ \ \ \ (d \in \mathfrak{L}_{\infty}(G)),
\end{equation*}
we conclude that
\begin{equation*}
(M_d \otimes I_Y)W=W(M_d \otimes I_X) \ \ \ \ (d \in \mathfrak{L}_{\infty}(G)),
\end{equation*}
it follows that $W \in \mathfrak{L}_{\infty}(G, \mathcal{O}(X,Y))$. 
By the same argument we can replace $V_0$ by an operator $V \in \mathfrak{L}_{\infty}(G, \mathcal{O}(X,Y))$, and 
by the same argument of the proof of \cite[Theorem 2.6]{MTT16} we have for any $k \in \mathfrak{L}_2(\mathfrak{B} \times \mathfrak{B})$
\begin{equation*}
\rho_D(\Phi(k(x,y)))=W(x)^{\ast}r_D(k(x,y))V(y) \ \ \ \ (k \in \mathfrak{L}_2(\mathfrak{B} \times \mathfrak{B}))
\end{equation*}
for almost all $(x,y) \in G\times G$.

Now since $\mathfrak{B} \times \mathfrak{B}$ is second-countable Banach bundle over second countable group $G$, $\mathfrak{L}(\mathfrak{B} \times \mathfrak{B})$ is separable. Let $\{k_n\}_{n \in \mathbb{N}} \subset \mathfrak{L}(\mathfrak{B} \times \mathfrak{B})$ be the countable dense subset of $\mathfrak{L}(\mathfrak{B} \times \mathfrak{B})$. In particular, for fixed $x,y \in G$ the set $\{k_n(x,y): n \in \mathbb{N}\}$ is dense in $D_{x,y}$. On the other hand, for each $n \in \mathbb{N}$ we define
\begin{equation*}
N_n=\{(x,y) \in G \times G: W(x)^{\ast} r_D(k_n(x,y))V(y) \neq \rho_D(\Phi(k_n(x,y)))\},
\end{equation*}
then $N=\bigcup_{n \in \mathbb{N}} N_n$ is null-subset of $G \times G$. If $(x,y) \in (G \times G) \setminus N$, then for any $a_{x,y} \in D_{x,y}$ we may have a sequence $\{k_{n_i}\}_{i \in \mathbb{N}} \subset \{k_n\}_{n \in \mathbb{N}}$ such that $k_{n_i}(x,y) \to a_{x,y}$, hence we have
\begin{equation*}\begin{split}
\rho_D(\Phi(a_{x,y}))=W(x)^{\ast} r_D(a_{x,y}) V(y).
\end{split}\end{equation*}
The proof of $(i) \Rightarrow (ii)$ is complete.

\bigskip
$(ii) \Rightarrow (i):$ For any $k \in \mathfrak{L}_2(\mathfrak{B} \times \mathfrak{B})$ we have
\begin{equation*}\begin{split}
\langle S_{\rho,\Phi}(T_k^{\rho}) \xi, \eta \rangle&= \int_G  W(x)^{\ast} r_D(k(x,y)) V(y) \xi(y), \eta(x) \rangle dx dy
\\&=\langle(W^{\ast}T_k^{\ast} V) \xi, \eta  \rangle \ \ \ \ (\xi, \eta \in \mathfrak{L}_2(G,X)),
\end{split}\end{equation*}
thus we have
\begin{equation*}
S_{\rho,\Phi}(T_k^{\rho})=W^{\ast}T_k^r V. 
\end{equation*}
Since $r$ is weakly contained in $\rho$, by Lemma \ref{19011401} $S_{\rho,\Phi}$ is completely bounded.
\end{proof}

\section{Herz-Schur multipliers}

In this section, we fix $\rho: B \to \mathcal{O}(X)$ to be a non-degenerate $\ast$-representation of Fell bundle $\mathfrak{B}$.

Let $\Psi: B \to B$ be a multiplier of $\mathfrak{B}$, for each $f \in \mathfrak{L}(\mathfrak{B})$ we define $\Psi \cdot f \in \mathfrak{L}(\mathfrak{B})$ by
\begin{equation*}
\Psi \cdot f(x)=\Psi(f(x)) \ \ \ \ (x \in G).
\end{equation*}

\begin{definition}
Let $\varsigma: B \to \mathcal{O}(Y)$ be a $\ast$-representation of $\mathfrak{B}$ on Hilbert space $Y$ such that $\varsigma|B_e$ is faithful. A  multiplier $\Psi: B \to B$ of $\mathfrak{B}$ is called $(\varsigma, \mathfrak{B})$-multiplier if the map $S_{\Psi}^{\varsigma}$ which is defined on $\{\varsigma_{{\rm{int}}}(f): f \in \mathfrak{L}_1(\mathfrak{B})\} \subset \mathcal{O}(Y)$ by
\begin{equation*}
S_{\Psi}^{\varsigma}(\varsigma_{{\rm{int}}}(f))=\varsigma_{{\rm{int}}}(\Psi \cdot f) \ \ \ \ ( f \in \mathfrak{L}_1(\mathfrak{B}))
\end{equation*}
is completely bounded. In this case, $S_{\Psi}^{\varsigma}$ may be extended to a completely bounded map on $\{\varsigma_{{\rm{int}}}(f):  f \in \mathfrak{L}_1(\mathfrak{B})\}^{-\mathcal{O}(Y)}$ which we still denote by $S_{\Psi}^{\varsigma}$.
\end{definition}
\begin{remark}\label{xcvhduk}
Notice that if $r$ and $\varsigma$ are weakly equivalent $^{\ast}$-representation of $\mathfrak{B}$, then it is easy to see that $\Psi$ is $(\varsigma, \mathfrak{B})$-multiplier if and only if $\Psi$ is $(r, \mathfrak{B})$-multiplier, and in this case
\begin{equation*}
\|S_{\Psi}^{\varsigma}\|_{\rm{cb}}=\|S_{\Psi}^{r}\|_{\rm{cb}}.
\end{equation*}
\end{remark}

We use symbol $\lambda_{\mathfrak{B}}$ to denote the regular $^{\ast}$-representation of $C^{\ast}(\mathfrak{B})$, and we denote the reduced $C^{\ast}$-algebra of $\mathfrak{B}$, i.e $C^{\ast}(\mathfrak{B}) / {\rm{Ker}}(({\lambda_{\mathfrak{B}}})_{\rm{int}})$, by $C^{\ast}_{\rm{Red}}(\mathfrak{B})$ (see \cite{MR1891686}).

 Let $\lambda: G \to \mathcal{O}(\mathfrak{L}_2(G))$ be the left regular representation of $G$. By \cite[VIII.9.16]{MR936629}, we can form $^{\ast}$-representation $\rho \otimes \lambda$ of $\mathfrak{B}$ on $\mathfrak{L}_2(G) \otimes X(\rho)$ defined by
 \begin{equation*}
 (\rho \otimes \lambda)_b=\lambda_{\pi(b)} \otimes \rho_b \ \ \ \ (b \in B).
 \end{equation*}
 From \cite{MR1891686} we can identify $C_{{\rm{Red}}}^{\ast}(\mathfrak{B})$ with
\begin{equation*}
\{(\rho \otimes \lambda)_{{\rm{int}}}(f): f \in \mathfrak{L}_1(\mathfrak{B})\}^{-\mathcal{O}(\mathfrak{L}_2(G,X))}.
\end{equation*}
For any $f \in \mathfrak{L}(\mathfrak{B})$ and $\xi \in \mathfrak{L}_2(G,X)$ we have
\begin{equation*}
(\rho \otimes \lambda)_{{\rm{int}}}(f)(\xi)(x)=\int_G \rho(f(y))(\xi(y^{-1}x)) dy
\end{equation*}
for almost all $x \in G$.

\begin{definition}\label{dfhjfv}
We call multiplier $\Psi: B \to B$ of $\mathfrak{B}$ Herz-Schur multiplier of $\mathfrak{B}$ if $\Psi$ is $(\rho \otimes \lambda, \mathfrak{B})$-multiplier. Furthermore, if $S_{\Psi}^{\rho \otimes \lambda}$ is completely positive we call $\Psi$ completely positive Herz-Schur multiplier of $\mathfrak{B}$. 

If $\Psi$ is Herz-Schur multiplier, we define

\begin{equation*}
\|\Psi\|_{\mathfrak{H.S}}=\|S^{\rho \otimes \lambda}_{\Psi}\|_{\rm{cb}}.
\end{equation*}
\end{definition}
\begin{remark}
By Remark \ref{xcvhduk}, Definition \ref{dfhjfv} is independent on the choice of $^{\ast}$-representation of $\rho: B \to \mathcal{O}(X(\rho))$ for which $\rho|_{B_e}$ is faithful. 
\end{remark}

Therefore if $\Psi$ is Herz-Schur multiplier, $S_{\Psi}^{\rho \otimes \lambda}$ may be extended to $\{(\rho \otimes \lambda)_{{\rm{int}}}(f): f \in \mathfrak{L}_1(\mathfrak{B})\}^{-\mathcal{O}(\mathfrak{L}_2(G,X))}$, thus we may regard $S_{\Psi}$ is completely bounded map defined on $C^{\ast}_{{\rm{Red}}}(\mathfrak{B})$. 

In the following, if $\Psi$ is Herz-Schur multiplier, we will denote $S_{\Psi}^{\rho \otimes \lambda}$ briefly by $S_{\Psi}$.

Th following lemma is well-known, we list it here for the concenience for reference
\begin{lemma}
Let $A$ be $C^{\ast}$-algebra and $Y$ a Hilbert space. If $f: A \to \mathcal{O}(Y)$ is completely bounded (resp. positive) map, then there is a non-degenerate representation $(V,W, R, Z)$ $(resp.(V,R,Z))$ of $f$ such that $\|f\|_{\rm{cb}}=\|V\|\|W\|$ (resp. $\|V\|^2$).
\end{lemma}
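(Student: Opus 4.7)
The strategy is to lift the problem to the unital multiplier algebra $M(A)$, invoke the classical Stinespring / Wittstock--Haagerup--Paulsen theorem there, and then restrict back to $A$ while cutting down to a non-degenerate piece. The extension machinery of Corollary \ref{190130} and Corollary \ref{19041501} does all of the heavy lifting for the first step.

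\textbf{Completely positive case.} By Corollary \ref{19041501}, a completely positive $f:A\to\mathcal{O}(Y)$ has a unique $(S_{A,A})$-extension $\widetilde f:M(A)\to\mathcal{O}(Y)$ which is completely positive, and satisfies $\|f\|_{\rm{cb}}=\|\widetilde f\|_{\rm{cb}}=\|\widetilde f(1_{M(A)})\|$. Apply the unital Stinespring theorem to $\widetilde f$ to obtain a $\ast$-representation $R:M(A)\to\mathcal{O}(Z)$ and $V\in\mathcal{O}(Y,Z)$ with $\widetilde f(a)=V^{\ast}R(a)V$ and $\|V\|^{2}=\|\widetilde f(1_{M(A)})\|$. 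Restricting $R$ to $A$ yields a representation $(V,R|_{A},Z)$ of $f$. To enforce non-degeneracy, set $Z_{0}=\overline{R(A)Z}$; using any norm-bounded approximate unit $\{a_{i}\}\subset A$, Corollary \ref{19041501} gives $R(a_{i})V\xi\to V\xi$ for all $\xi\in Y$, so $V(Y)\subset Z_{0}$ and the compression of $R$ to $Z_{0}$ is a non-degenerate representation whose associated representation $(V,R|_{A},Z_{0})$ still satisfies $f(a)=V^{\ast}R(a)V$ and $\|V\|^{2}=\|f\|_{\rm{cb}}$.

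\textbf{Completely bounded case.} By Corollary \ref{190130}, $f$ extends uniquely to an $(S_{A})$-map $\widetilde f:M(A)\to\mathcal{O}(Y)$; since $\widetilde f$ is a strong$^{\ast}$-limit of values of $f$ on norm-bounded nets, one has $\|\widetilde f\|_{\rm{cb}}=\|f\|_{\rm{cb}}$. Apply the classical Wittstock--Haagerup--Paulsen representation theorem to the unital completely bounded map $\widetilde f$ to produce a $\ast$-representation $R:M(A)\to\mathcal{O}(Z)$ and operators $V,W\in\mathcal{O}(Y,Z)$ with $\widetilde f(a)=W^{\ast}R(a)V$ and $\|V\|\,\|W\|=\|\widetilde f\|_{\rm{cb}}$. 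Restrict $R$ to $A$, then cut down to the essential subspace $Z_{0}=\overline{R(A)Z}$, replacing $V$ and $W$ by $EV$ and $EW$, where $E$ is the projection onto $Z_{0}$. Because $A$ is strictly dense in the unit ball of $M(A)$ and $R$ extends strictly-to-strong$^{\ast}$ continuously, an approximate-unit argument gives $EV=V$ and $EW=W$, so the norms are preserved and $(V,W,R|_{A},Z_{0})$ is a non-degenerate representation of $f$ with $\|f\|_{\rm{cb}}=\|V\|\,\|W\|$.

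\textbf{Main obstacle.} The delicate point is the final norm-preservation step: in passing from the abstract Stinespring/Wittstock data on $M(A)$ to a non-degenerate representation of the non-unital $f$ on $A$, one must ensure that projecting $V$ and $W$ onto the essential subspace does not strictly decrease either norm. For the CP case this is automatic once one observes that $\|V\|^{2}$ is realised as $\|\widetilde f(1_{M(A)})\|$ and that $R(a_{i})V\to V$ strongly along an approximate unit $\{a_{i}\}\subset A$. For the CB case the same approximate-unit device, together with the strict continuity of $\widetilde f$ and of the extended $\ast$-representation $R$ built into Corollary \ref{190130}, forces $V$ and $W$ to already land in $Z_{0}$, so both sharp norm identities survive the cut-down.
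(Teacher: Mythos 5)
The paper itself offers no argument for this lemma (it is recorded as ``well-known''), so the only question is whether your proof stands on its own. The completely positive half does: extending to $M(A)$ by Corollary \ref{19041501}, applying unital Stinespring, and cutting to $Z_0=\overline{R(A)Z}$ is the standard route, and your claim $R(a_i)V\xi\to V\xi$ is correct because
$\|R(a_i)V\xi-V\xi\|^2=\langle \widetilde f(a_i^*a_i)\xi,\xi\rangle-2\,\mathrm{Re}\,\langle \widetilde f(a_i)\xi,\xi\rangle+\langle \widetilde f(1)\xi,\xi\rangle\to 0$
by the strict-to-strong$^*$ continuity of $\widetilde f$; you should say this explicitly, since it is exactly here that complete positivity (i.e.\ $\widetilde f=V^*RV$) is used.

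The gap is in the completely bounded half. Your ``same approximate-unit device'' for showing $EV=V$ and $EW=W$ does not transfer: the identity above expands $\|R(a_i)V\xi-V\xi\|^2$ in terms of $\widetilde f$ only because $\widetilde f=V^*RV$; when $\widetilde f=W^*RV$ the quantity $\langle R(a_i^*a_i)V\xi,V\xi\rangle$ is not a value of $\widetilde f$, and strict continuity of $\widetilde f$ gives no control over it. (One could try to run the argument on the $2\times 2$ Paulsen dilation, but that requires strict continuity of the diagonal corner maps, which is not provided by Corollary \ref{190130}.) Fortunately the conclusion you want does not need $EV=V$: since $R(a)=ER(a)E$ for $a\in A$, the triple $(EV,EW,R|_A$ compressed to $Z_0)$ is already a non-degenerate representation of $f$, whence $\|f\|_{\rm cb}\le\|EW\|\,\|EV\|\le\|W\|\,\|V\|=\|\widetilde f\|_{\rm cb}=\|f\|_{\rm cb}$ and all inequalities collapse. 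Replace the $EV=V$ claim by this two-line sandwich and the CB case closes. One further caution: invoking Corollary \ref{190130} to produce $\widetilde f$ is mildly circular, since its proof begins by assuming $f$ already has a representation $(W,V,r,Y)$ --- which is part of what you are proving; it is cleaner to first extend $f$ to the unitization $A^1$ by Wittstock's extension theorem (or to $M(A)$ directly, checking $\|\widetilde f\|_{\rm cb}=\|f\|_{\rm cb}$ by lower semicontinuity of the norm under strong$^*$ limits along a bounded approximate unit, as you indicate) and only then dilate.
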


\begin{proposition}\label{19012301}
Let $\varsigma: B \to \mathcal{O}(Y)$ be a $\ast$-representation of $\mathfrak{B}$, $\Psi: B \to B$ be a multiplier of $\mathfrak{B}$, the following are equivalent:

(i) $\Psi$ is $(\varsigma, \mathfrak{B})$-multiplier;

(ii) There is non-degenerate $\ast$-representation $r: B \to \mathcal{O}(Z)$ on Hilbert space $Z$ which is weakly contained in $\varsigma$, and bounded operators $V,W \in \mathcal{O}(Y,Z)$ $($here $Y=X(\varsigma)$$)$ such that
\begin{equation*}
\varsigma(\Psi(a_x))=W^{\ast}r(a_x)V \ \ \ \ (a_x \in B_x)
\end{equation*}
for all $x \in G$ and $\|S^{\varsigma}_{\Psi}\|_{\rm{cb}}=\|V\|\|W\|$.
\end{proposition}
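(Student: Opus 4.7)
The plan is to prove the two implications separately. Direction (ii)$\Rightarrow$(i) is a direct computation exploiting weak containment, while (i)$\Rightarrow$(ii) applies the Stinespring-type representation lemma stated just before the proposition to the completely bounded map $S^\varsigma_\Psi$, constructs $r$ from the dilating representation, and then extracts the pointwise identity via an approximation argument in the bundle.

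For (ii)$\Rightarrow$(i), I integrate the pointwise hypothesis $\varsigma(\Psi(a_x)) = W^* r(a_x) V$ against an arbitrary $f \in \mathfrak{L}_1(\mathfrak{B})$, which yields
\begin{equation*}
\varsigma_{\rm int}(\Psi \cdot f) = \int_G \varsigma(\Psi(f(x))) \, d\mu(x) = W^* \Bigl(\int_G r(f(x)) \, d\mu(x)\Bigr) V = W^* r_{\rm int}(f) V,
\end{equation*}
where $V$ and $W^*$ may be pulled through the strong-operator integral since they are bounded. Because $r$ is weakly contained in $\varsigma$, the correspondence $\varsigma_{\rm int}(f) \mapsto r_{\rm int}(f)$ extends to a $*$-homomorphism $\pi$ between the $C^*$-algebras generated by the integrated forms, so $S^\varsigma_\Psi(T) = W^* \pi(T) V$ on $\{\varsigma_{\rm int}(f): f \in \mathfrak{L}_1(\mathfrak{B})\}^-$. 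This is the composition of a $*$-homomorphism with left and right multiplication by bounded operators, hence completely bounded with cb-norm at most $\|V\|\|W\|$; the reverse inequality (and the stated equality) is part of the hypothesis.

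For (i)$\Rightarrow$(ii), set $A := \{\varsigma_{\rm int}(f): f \in \mathfrak{L}_1(\mathfrak{B})\}^-$, the $C^*$-subalgebra of $\mathcal{O}(Y)$ on which $S^\varsigma_\Psi$ is defined. The representation lemma stated just before the proposition furnishes a non-degenerate representation $(V, W, \pi, Z)$ of $S^\varsigma_\Psi$ with $\|V\|\|W\| = \|S^\varsigma_\Psi\|_{\rm cb}$ and
\begin{equation*}
S^\varsigma_\Psi(\varsigma_{\rm int}(f)) = W^* \pi(\varsigma_{\rm int}(f)) V \qquad (f \in \mathfrak{L}_1(\mathfrak{B})).
\end{equation*}
The composition $r_{\rm int} := \pi \circ \varsigma_{\rm int}: C^*(\mathfrak{B}) \to \mathcal{O}(Z)$ is then a non-degenerate $*$-representation that factors through $\varsigma_{\rm int}$, so via the Fell correspondence it is the integrated form of a unique non-degenerate $*$-representation $r: B \to \mathcal{O}(Z)$ of $\mathfrak{B}$ weakly contained in $\varsigma$. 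Combining the two displays yields the integral identity
\begin{equation*}
\int_G \varsigma(\Psi(f(x))) \, d\mu(x) = \int_G W^* r(f(x)) V \, d\mu(x) \qquad (f \in \mathfrak{L}(\mathfrak{B})).
\end{equation*}

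To pass from the integrated identity to the pointwise identity, fix $x_0 \in G$ and $a \in B_{x_0}$, choose a continuous cross-section $f \in \mathfrak{L}(\mathfrak{B})$ with $f(x_0) = a$, and take a sequence $\phi_n \in C_c(G)$ of nonnegative scalar functions with $\int_G \phi_n \, d\mu = 1$ and supports shrinking to $\{x_0\}$. Inserting $\phi_n f$ into the integral identity and using fiber-wise linearity of $\Psi$, $\varsigma$ and $r$, both sides become integrals of $\phi_n(x)$ against the strong-operator continuous maps $x \mapsto \varsigma(\Psi(f(x)))$ and $x \mapsto W^* r(f(x)) V$, whose limits as $n \to \infty$ are $\varsigma(\Psi(a))$ and $W^* r(a) V$ respectively. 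The main obstacle will be precisely this last step, where strong-operator continuity of the two integrands must be verified; this ultimately rests on continuity of $\Psi$, $\varsigma$ and $r$ on the bundle space $B$ together with boundedness of $V$ and $W$.
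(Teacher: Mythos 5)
Your proposal is correct and follows essentially the same route as the paper: for (i)$\Rightarrow$(ii) the paper likewise takes a representation $(W,V,R,Z)$ of $S^\varsigma_\Psi$, defines $r$ as the $\ast$-representation of $\mathfrak{B}$ whose integrated form is $R\circ\varsigma_{\rm int}$, and recovers the pointwise identity by integrating against a net of nonnegative functions in $\mathfrak{L}(G)$ with shrinking supports and total mass one, exactly your localization argument; for (ii)$\Rightarrow$(i) it likewise factors $S^\varsigma_\Psi$ as $W^\ast R(\cdot)V$ through the $\ast$-homomorphism given by weak containment. The only difference is cosmetic (you flag the strong-operator continuity of the integrands explicitly, which the paper takes for granted).
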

\begin{proof}
$(i) \Rightarrow (ii):$ Since $S_{\Psi}^{\varsigma}: \varsigma_{{\rm{int}}}(C^{\ast}(\mathfrak{B})) \to \mathcal{O}(Z)$ is completely bounded map, let $(W, V, R, Z)$ be its representation.
Therefore, let $r: B \to \mathcal{O}(Z)$ be $\ast$-representation of $\mathfrak{B}$ whose integrated form is $R \circ \varsigma_{\rm{int}}$, we have
\begin{equation*}\begin{split}
S_{\Psi}^{\varsigma}(\varsigma_{{\rm{int}}}(f))&=W^{\ast}r_{{\rm{int}}}(f)V  
\\&=W^{\ast} \ \int_G r(f(x)) dx \ V \ \ \ \ (f \in \mathfrak{L}_1(\mathfrak{B})).
\end{split}\end{equation*}
Now let $y \in G$ and $a_y \in B_y$, let $\{g_i\}_{i \in I} \subset \mathfrak{L}(G)$ be a net such that ${\rm{supp}}(g_i) \to y$, $g_i \geq 0$ and $\int_G g_i(x) dx=1$. Furthermore, let $f \in \mathfrak{L}(\mathfrak{B})$ such that $f(y)=a_y$, we have
\begin{equation*}\begin{split}
S_{\Psi}^{\varsigma}(\varsigma_{{\rm{int}}}(g_i f)) \underset{{\rm{strong \ operator}}}{\longrightarrow} \varsigma(\Psi(f(y))).
=\varsigma(\Psi(a_y))
\end{split}\end{equation*}
On the other hand
\begin{equation*}\begin{split}
W^{\ast} \left(\int_G \varsigma(g_i(x)f(x)) dx \right) V  & \underset{{\rm{strong \ operator}}}{\longrightarrow} W^{\ast} r(f(y)) V
\\&=W^{\ast} r(a_y) V.
\end{split}\end{equation*}
Therefore we proved that
\begin{equation*}
\varsigma(\Psi(a_y))=W^{\ast} r(a_y) V \ \ \ \ (y \in G; a_y \in B_y)
\end{equation*}

$(ii) \Rightarrow (i):$ Since $r$ is weakly contained in $\varsigma$, then we have $\ast$-homomorphism $R: \varsigma_{{\rm{int}}}(C^{\ast}(\mathfrak{B})) \to r_{{\rm{int}}}(C^{\ast}(\mathfrak{B}))$ which satisfies
\begin{equation*}
R(\varsigma_{{\rm{int}}}(f))=r_{{\rm{int}}}(f) \ \ \ \ (f \in \mathfrak{L}_1(\mathfrak{B})).
\end{equation*}
Therefore for arbitrary $\xi \in Y$ we have
\begin{equation*}\begin{split}
\varsigma_{{\rm{int}}}(\Psi \cdot f)(\xi)=W^{\ast}R(\varsigma_{{\rm{int}}}(f)) V(\xi) \ \ \ \ (f \in \mathfrak{L}_1(\mathfrak{B})),
\end{split}\end{equation*}
thus $\varsigma_{{\rm{int}}}(\Psi \cdot f)=W^{\ast}R(\varsigma_{{\rm{int}}}(f)) V$ for all $f \in \mathfrak{L}_1(\mathfrak{B})$, we may conclude that $S_{\Psi}^{\varsigma}$ is completely bounded.
\end{proof}

Let $\Psi: B \to B$ be a multiplier of $\mathfrak{B}$, then we define a map $\mathfrak{B} \times \mathfrak{B}$ $N(\Psi): D  \to D $ by
\begin{equation}\label{cvjdfukfwuriue}
N(\Psi)((x,y; a))=(x,y; \Psi(a)) \ \ \ \ (x,y \in G; (x,y; a) \in D_{x,y}).
\end{equation}
It is routine to check that $N(\Psi)$ satisfies $(i)$-$(ii)$ of Definition \ref{chsduiiusrfcnh}, thus $N(\Psi)$ is multiplier of $\mathfrak{B} \times \mathfrak{B}$.

\begin{theorem}\label{19012501}
Suppose that $\rho$ weakly contains $\lambda_{\mathfrak{B}}$. Let $\Psi: B \to B$ be a multiplier of $\mathfrak{B}$. The following are equivalent:

(i) $\Psi$ is (resp. completely positive) Herz-Schur multiplier of $\mathfrak{B}$;

(ii) $N(\Psi)$ is (resp. completely positive) Schur $(\rho,\mathfrak{B})$-multiplier.

If either $(i)$ or $(ii)$ holds, we have
\begin{equation*}
\|\Psi\|_{\mathfrak{H.S}}=\|N(\Psi)\|_{\mathfrak{S}}.
\end{equation*}
\end{theorem}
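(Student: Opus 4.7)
My approach is to deduce the equivalence from the structural characterizations of the two notions. Proposition~\ref{19012301} characterizes $\Psi$ as a Herz-Schur multiplier through a Stinespring-type factorization $(\rho \otimes \lambda)(\Psi(b)) = W^{\ast} r(b) V$ with $r$ weakly contained in $\rho \otimes \lambda$, while Theorem~\ref{19012502} characterizes $N(\Psi)$ as a Schur $(\mathfrak{B}, \rho)$-multiplier via a pointwise factorization $\rho(\Psi(a)) = W(x)^{\ast} r'(a) V(y)$ for $a \in B_{xy^{-1}}$, with $r'$ weakly contained in $\rho$ and $V, W \in \mathfrak{L}_{\infty}(G, \mathcal{O}(X, Y))$. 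Both characterizations realize the relevant cb-norm as the product $\|V\|\|W\|$, so the plan is to convert between the two factorizations and read off the norm equality as a by-product. The hypothesis that $\rho$ weakly contains $\lambda_\mathfrak{B}$ enters precisely to identify weak containment in $\rho \otimes \lambda$ with weak containment in $\rho$, since the paper has already identified the reduced $C^{\ast}$-algebra with the image of $(\rho \otimes \lambda)_{\rm{int}}$.

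For $(ii) \Rightarrow (i)$, starting from the pointwise Schur factorization I would introduce the representation $(r' \otimes \lambda)_b = \lambda_{\pi(b)} \otimes r'_b$ on $\mathfrak{L}_2(G, Y)$ together with the multiplication operators $\widetilde{V}, \widetilde{W} : \mathfrak{L}_2(G, X) \to \mathfrak{L}_2(G, Y)$ defined by $(\widetilde{V}\xi)(x) = V(x)\xi(x)$ and $(\widetilde{W}\xi)(x) = W(x)\xi(x)$. A direct computation shows, for $b \in B_s$ and $y = s^{-1}x$,
\[
\widetilde{W}^{\ast}(r' \otimes \lambda)(b)\widetilde{V}\xi(x) = W(x)^{\ast} r'(b) V(s^{-1}x)\xi(s^{-1}x) = \rho(\Psi(b))\xi(s^{-1}x) = (\rho \otimes \lambda)(\Psi(b))\xi(x),
\]
using $s = xy^{-1}$ so that $b \in B_{xy^{-1}}$. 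Tensoring with $\lambda$ preserves weak containment, so $r' \otimes \lambda$ is weakly contained in $\rho \otimes \lambda$; Proposition~\ref{19012301} then identifies $\Psi$ as Herz-Schur with $\|\Psi\|_{\mathfrak{H.S}} \leq \|\widetilde{V}\|\|\widetilde{W}\| = \|V\|_{\infty}\|W\|_{\infty}$. Taking the infimum over factorizations gives $\|\Psi\|_{\mathfrak{H.S}} \leq \|N(\Psi)\|_{\mathfrak{S}}$. The completely positive case follows by choosing $V = W$ throughout.

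For $(i) \Rightarrow (ii)$, starting from $(\rho \otimes \lambda)(\Psi(b)) = W_0^{\ast} r(b) V_0$ with $r : B \to \mathcal{O}(Z)$ weakly contained in $\rho \otimes \lambda$ and $V_0, W_0 : \mathfrak{L}_2(G, X) \to Z$, the task is to extract a pointwise factorization matching Theorem~\ref{19012502}. I would mimic the argument used in the proof of Theorem~\ref{19012502} $(i) \Rightarrow (ii)$: compress $r$ to the $r$-invariant subspace generated by $r(B)V_0 \mathfrak{L}_2(G, X)$, and exploit the covariance of $(\rho \otimes \lambda)(\Psi(b))$ with the translation and $L^{\infty}(G)$-multiplication actions on $\mathfrak{L}_2(G, X)$ to force $V_0, W_0$ to intertwine suitable $L^{\infty}(G)$-actions, up to a decomposition of $r$ as $r' \otimes \lambda$. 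This decomposition is the Fell-bundle analogue of \cite[Lemma 2.5]{MTT16} invoked in Proposition~\ref{19011801}. Once $V_0, W_0$ are identified with multiplication by $V, W \in \mathfrak{L}_{\infty}(G, \mathcal{O}(X, Y))$ and $r$ with $r' \otimes \lambda$, evaluating on $b \in B_{xy^{-1}}$ and using the formula computed in the reverse direction produces the required pointwise identity $\rho(\Psi(b)) = W(x)^{\ast} r'(b) V(y)$. The hypothesis that $\rho$ weakly contains $\lambda_\mathfrak{B}$, combined with the weak equivalence of $\rho \otimes \lambda$ with $\lambda_\mathfrak{B}$, then forces $r'$ to be weakly contained in $\rho$, and Theorem~\ref{19012502} identifies $N(\Psi)$ as a Schur $(\mathfrak{B}, \rho)$-multiplier with $\|N(\Psi)\|_{\mathfrak{S}} \leq \|V\|_{\infty}\|W\|_{\infty} = \|V_0\|\|W_0\| = \|\Psi\|_{\mathfrak{H.S}}$.

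The main obstacle is the decomposition step in $(i) \Rightarrow (ii)$: disentangling the regular-representation tensor factor from $r$ and simultaneously promoting $V_0, W_0$ to $\mathfrak{L}_{\infty}(G)$-multiplication operators. This is the technical heart of the proof and is parallel to the covariance argument in Proposition~\ref{19011801} and Theorem~\ref{19012502}; its verification reduces to checking that the appropriate covariance identities for the translation action of $G$ transfer to the present Fell-bundle setting. Once this is accomplished, the equivalence and the norm equality $\|\Psi\|_{\mathfrak{H.S}} = \|N(\Psi)\|_{\mathfrak{S}}$ follow by combining the two directions above, with the completely positive case handled by choosing $V = W$ in both characterizations.
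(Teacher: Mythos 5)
Your direction $(ii)\Rightarrow(i)$ is correct and is essentially the paper's argument: the paper defers this implication to the proof of \cite[Theorem 3.8]{MTT16}, which is exactly your computation with the multiplication operators $\widetilde V,\widetilde W$ and the representation $r'\otimes\lambda$ (weak containment of $r'\otimes\lambda$ in $\rho\otimes\lambda$ follows because $r'\otimes\lambda$ is always weakly contained in $\lambda_{\mathfrak{B}}$, which is weakly equivalent to $\rho\otimes\lambda$). The problem is your direction $(i)\Rightarrow(ii)$.

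There you propose to decompose the Stinespring representation $r$ of $S_\Psi$ as $r'\otimes\lambda$ and to promote the constant operators $V_0,W_0$ to $\mathfrak{L}_\infty(G)$-multiplication operators by ``mimicking'' the covariance argument in the proof of Theorem \ref{19012502}. That argument hinges on the bimodule identity $S_{\rho,\Phi}\bigl((M_a\otimes I)\,T\,(M_b\otimes I)\bigr)=(M_a\otimes I)\,S_{\rho,\Phi}(T)\,(M_b\otimes I)$, i.e.\ on the Schur map being an $\mathfrak{L}_\infty(G)$-bimodule map. The Herz--Schur map $S_\Psi$ on $C^{\ast}_{\rm Red}(\mathfrak{B})$ has no such property: the operators $(\rho\otimes\lambda)(b)$ for $b\in B_s$ do not commute with $M_d\otimes I_X$ but translate $d$, so the mimicry fails at its first step, and there is no reason the dilation $r$ should factor through a tensor product with $\lambda$. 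The step is also unnecessary. Proposition \ref{19012301} already gives constant operators with $(\rho\otimes\lambda)(\Psi(a))=W_0^{\ast}r(a)V_0$ and $r$ weakly contained in $\rho\otimes\lambda\,(\sim\lambda_{\mathfrak{B}})$, hence in $\rho$. Setting $W(x)=W_0(\lambda_x\otimes I_X)$ and $V(y)=V_0(\lambda_y\otimes I_X)$ produces uniformly bounded fields with
\begin{equation*}
W(x)^{\ast}\,r(a)\,V(y)=(\lambda_{x}\otimes I)^{\ast}\bigl(\lambda_{xy^{-1}}\otimes\rho(\Psi(a))\bigr)(\lambda_{y}\otimes I)=1\otimes\rho(\Psi(a))\qquad(a\in B_{xy^{-1}}),
\end{equation*}
which is precisely the factorization required in Theorem \ref{19012502}(ii) for the amplification $1\otimes\rho$; since $1\otimes\rho$ is weakly equivalent to $\rho$, Proposition \ref{19012303} transfers the Schur multiplier property to $\rho$. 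This is the paper's route; with this replacement your norm bookkeeping, and the completely positive case via $V=W$, go through.
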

\begin{proof}
$(i) \Rightarrow (ii):$  By Proposition \ref{19012301} we have $\ast$-representation $r: B \to \mathcal{O}(Y)$ on Hilbert space $Y$ which is weakly contained in $\lambda_{\mathfrak{B}}$ and bounded operators $W_0,V_0 \in \mathcal{O}(X,Y)$ such that
\begin{equation*}\begin{split}
\rho \otimes \lambda (\Psi(a_x))=W_0^{\ast} r(a_x) V_0 \ \ \ \ (x \in G; a_x \in B_x).
\end{split}\end{equation*}
By Proposition \ref{19012502} and the same argument of $(i) \Rightarrow (ii)$ it is not hard to prove that $N(\Psi)$ is Schur $(\rho \otimes 1_{\mathcal{O}(\mathfrak{L}(G))}, \mathfrak{B})$-multiplier, such that 
\begin{equation*}
\|S_{\rho \otimes 1_{\mathcal{O}(\mathfrak{L}_2(G))},N(\Psi)}\|_{\rm{cb}} \leq \|V\|\|W\|=\|S_{\Psi}\|_{\rm{cb}}. 
\end{equation*}
By Proposition \ref{19012303} $N(\Psi)$ is $(\rho, \mathfrak{B})$-multiplier.

$(ii) \Rightarrow (i)$: By the same argument of $(ii) \Rightarrow (i)$ of the proof of \cite[Theorem 3.8]{MTT16} $\Psi$ is Herz-Schur multiplier and 
\begin{equation*}
\|(\rho \otimes \lambda)_{{\rm{int}}}(\Psi \cdot f) \|_{\rm{cb}} \leq \|S_{N(\Psi)}\|_{\rm{cb}}.
\end{equation*}
Thus $\|(\rho \otimes \lambda)_{{\rm{int}}}(\Psi \cdot f) \|_{\rm{cb}}=\|S_{N(\Psi)}\|_{\rm{cb}}$.
The proof is complete.
\end{proof}

\begin{corollary}
If $\varsigma: B \to \mathcal{O}(Y)$ is $\ast$-representation of $\mathfrak{B}$ which weakly contains $\lambda_{\mathfrak{B}}$, then any $(\varsigma, \mathfrak{B})$-multiplier is Herz-Schur multiplier.
\end{corollary}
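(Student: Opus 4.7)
The plan is to reduce the claim to the Schur multiplier side and then invoke the transference theorem. The key observation is that the ``rank‑one coefficient'' decomposition provided by Proposition \ref{19012301} immediately gives a Schur multiplier of the special form described in Theorem \ref{19012502} with \emph{constant} $\mathfrak{L}_\infty$-data.

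First I note that, since by Remark after Definition \ref{dfhjfv} the Herz--Schur multiplier notion is independent of the choice of admissible $\rho$, I am free to take $\rho = \varsigma$ throughout Sections~3--5. This is legal: $\varsigma|_{B_e}$ is faithful (this is part of what is required of a $(\varsigma,\mathfrak{B})$-multiplier), and by hypothesis $\varsigma$ weakly contains $\lambda_{\mathfrak{B}}$, which is the extra assumption required to apply Theorem \ref{19012501}.

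Next, apply Proposition \ref{19012301}, $(i)\Rightarrow(ii)$, to the hypothesis that $\Psi$ is a $(\varsigma,\mathfrak{B})$-multiplier. This produces a non-degenerate $\ast$-representation $r: B \to \mathcal{O}(Z)$ that is weakly contained in $\varsigma$, together with bounded operators $V_0, W_0 \in \mathcal{O}(X(\varsigma), Z)$, such that
\begin{equation*}
\varsigma(\Psi(a_z)) = W_0^{\ast} r(a_z) V_0 \qquad (z \in G,\ a_z \in B_z).
\end{equation*}
Now set $V(y) \equiv V_0$ and $W(x) \equiv W_0$; these constant functions clearly belong to $\mathfrak{L}_\infty(G, \mathcal{O}(X(\varsigma), Z))$. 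Unfolding the definition of $N(\Psi)$ from (\ref{cvjdfukfwuriue}) and of the fiber-wise representations $\varsigma_D, r_D$, we immediately get
\begin{equation*}
\varsigma_D(N(\Psi)((x,y;a))) = \varsigma(\Psi(a)) = W_0^{\ast} r(a) V_0 = W(x)^{\ast} r_D((x,y;a)) V(y)
\end{equation*}
for every $(x,y) \in G \times G$ and every $a \in B_{xy^{-1}}$.

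This is exactly condition $(ii)$ of Theorem \ref{19012502} (with the ambient $\rho$ taken to be $\varsigma$), so by that theorem's $(ii)\Rightarrow(i)$ direction $N(\Psi)$ is a Schur $(\varsigma,\mathfrak{B})$-multiplier. Since $\varsigma$ weakly contains $\lambda_{\mathfrak{B}}$, Theorem \ref{19012501}, $(ii)\Rightarrow(i)$, then yields that $\Psi$ is a Herz--Schur multiplier of $\mathfrak{B}$, as desired. The argument is essentially mechanical once one accepts the two previous characterization theorems; the only conceptual point—and thus the closest thing to an ``obstacle''—is recognizing that the $x,y$-independent operators $V_0, W_0$ supplied by Proposition \ref{19012301} trivially qualify as the $\mathfrak{L}_\infty$-valued symbols required by Theorem \ref{19012502}, after which the transference theorem does all the work.
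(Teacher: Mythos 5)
Your proof is correct and takes essentially the same route as the paper: the paper's own proof is just the one-line remark that the corollary is the combination of Proposition \ref{19012301}, Theorem \ref{19012502} and Theorem \ref{19012501}, and your argument spells out exactly that combination (factorize $\varsigma(\Psi(\cdot))$ via Proposition \ref{19012301}, feed the resulting constant $\mathfrak{L}_{\infty}$-symbols into condition $(ii)$ of Theorem \ref{19012502} to see that $N(\Psi)$ is a Schur $(\varsigma,\mathfrak{B})$-multiplier, then transfer back with Theorem \ref{19012501} applied with $\rho=\varsigma$, which is legitimate since $\varsigma$ weakly contains $\lambda_{\mathfrak{B}}$). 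The only cosmetic caveat is that the ambient $\rho$ of Sections 3--5 is assumed non-degenerate, so one should either pass to the essential subspace of $\varsigma$ or observe that this changes neither the hypothesis nor the conclusion.
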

\begin{proof}
This is the combination of Theorem \ref{19012501}, Proposition \ref{19012301} and Theorem \ref{19012502}.
\end{proof}

\section{Nuclearifty of Cross-Sectional Algebra}

In this section we assume that $\mathfrak{B}$ is a Fell-bundle over discrete group $G$ and that  $\rho$ is a fixed $^{\ast}$-representation of $\mathfrak{B}$ which weakly contains $\lambda_{\mathfrak{B}}$. Recall that  in this case we always have
\begin{equation*}
\mathfrak{E}(\rho, \mathfrak{B}) \subset \mathcal{O}_c(\mathfrak{L}_2(G)) \otimes \rho_{{\rm{int}}}(C^{\ast}(\mathfrak{B})).
\end{equation*}
Therefore by Lemma \ref{19012801} and Theorem \ref{19012502} we have
\begin{corollary}\label{19021202}
Let $\Phi: D \to D$ be a multiplier of $\mathfrak{B} \times \mathfrak{B}$, the the following are equivalent:

(i) $\Phi$ is (resp. completely positive) Schur $(\mathfrak{B} , \rho)$-multiplier.

(ii) There are non-degenerate $\ast$-representation $r: B \to \mathcal{O}(Y)$ on some Hilbert space $Y$ which is weakly contained in $\rho$, and $V, W \in \mathfrak{L}_{\infty}(G, \mathcal{O}(X, Y))$ (resp.$\,V=W$) such that
\begin{equation*}
\rho_D(\Phi((x,y; a))= W^{\ast}(x) r(a) V(y) \ \ \ \ (x,y \in G; \ a \in B_{xy^{-1}}).
\end{equation*}
\end{corollary}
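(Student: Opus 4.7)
The plan is to reduce the corollary directly to Theorem~\ref{19012502} by showing that, in the present discrete setting, the $(S_{C,A})$-extendability hypothesis appearing in part (i) of that theorem is automatic and can therefore be dropped from the statement. So the task is almost entirely bookkeeping: combine Corollary~\ref{19012801} (automatic extendability of completely bounded maps from a $C^*$-subalgebra) with Theorem~\ref{19012502} (the characterization of Schur multipliers with the extendability clause).

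The implication $(ii) \Rightarrow (i)$ is already proved inside Theorem~\ref{19012502}: the argument there for $(ii) \Rightarrow (i)$ never uses extendability, and the completely positive case $V = W$ is handled there as well. So this direction carries over verbatim and requires no new input.

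For $(i) \Rightarrow (ii)$, the crucial preparatory step is to record the inclusion
\begin{equation*}
\mathfrak{E}(\rho,\mathfrak{B}) \subset \mathcal{O}_c(\mathfrak{L}_2(G)) \otimes \rho_{\rm{int}}(C^*(\mathfrak{B})),
\end{equation*}
which holds because $G$ is discrete: by \cite[VIII.16.11]{MR936629} one has $\rho(B) \subset \rho_{\rm{int}}(C^*(\mathfrak{B}))$, and from the formula \eqref{hvcdffiiul} for $\rho_D$ together with the definition of $T_k^\rho$, every generator $T_k^\rho$ with $k \in \mathfrak{L}_2(\mathfrak{B} \times \mathfrak{B})$ lies in the right-hand side. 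Setting $C = \mathfrak{E}(\rho,\mathfrak{B})$ and $A = \mathcal{O}_c(\mathfrak{L}_2(G)) \otimes \rho_{\rm{int}}(C^*(\mathfrak{B}))$, we are in the situation $C \subset A$ of Corollary~\ref{19012801}. Applying that corollary to the completely bounded map $S_{\rho,\Phi}: C \to \mathcal{O}(\mathfrak{L}_2(G,X))$ produces an $(S_{C,A})$-extension, and in the completely positive case the extension may be chosen completely positive. Thus the extendability clause in part (i) of Theorem~\ref{19012502} is satisfied for free, and the $(i) \Rightarrow (ii)$ half of that theorem immediately yields the desired representation $\rho_D(\Phi((x,y;a))) = W^*(x) r(a) V(y)$.

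The only point that warrants a brief check is the completely positive refinement $V = W$. One feeds the completely positive $(S_{C,A})$-extension from Corollary~\ref{19012801} into Proposition~\ref{19011801}, whose last clause guarantees a non-degenerate representation $(W,V,R)$ with $V=W$ precisely when the underlying map is completely positive; this then propagates through the $(i) \Rightarrow (ii)$ proof of Theorem~\ref{19012502} to give $V=W$ in (ii). I do not anticipate a genuine obstacle here; the mild care needed is simply in verifying that the completely positive extension really does descend compatibly through Proposition~\ref{19011801} rather than producing a stray Stinespring pair on distinct spaces, but this is built into how the completely positive case is phrased in each of those results.
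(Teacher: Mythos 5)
Your proposal is correct and follows essentially the same route as the paper: the paper's own justification is precisely the observation that discreteness gives $\mathfrak{E}(\rho,\mathfrak{B}) \subset \mathcal{O}_c(\mathfrak{L}_2(G)) \otimes \rho_{\rm{int}}(C^{\ast}(\mathfrak{B}))$, so that Corollary~\ref{19012801} makes the $(S_{C,A})$-extendability hypothesis in Theorem~\ref{19012502}(i) automatic. Your extra care in tracking the completely positive case ($V=W$) through Corollary~\ref{19012801} and Proposition~\ref{19011801} is exactly how the paper's ``resp.'' clause is meant to be justified.
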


\bigskip

Let $\Phi : D \to D$ be a multiplier of $\mathfrak{B} \times \mathfrak{B}$, we define $\Phi^{\rho}_{x}: \rho_D(D_{x,x}) \to \rho_D(D_{x,x})$ ($x \in G$) by
\begin{equation*}
\Phi^{\rho}_x(\rho_D((x,x; a))=\rho_D(\Phi((x,x; a)) \ \ \ \ ((x,x; a) \in D_{x,x}). 
\end{equation*}

\begin{remark}
Let us recall that each $D_{x,x}=B_e$ $(x \in G)$. However it is not necessary that all $\Phi_x^{\rho}$ are identical. 
\end{remark}

\begin{proposition}\label{19012901}
Let $\Phi:  D \to D$ be a completely positive Schur $(\rho, \mathfrak{B})$-multiplier, then each $\Phi^{\rho}_x$ is completely positive and
\begin{equation*}
\|\Phi\|_{\mathfrak{S}}={\rm{sup}}\{\|\Phi_x^{\rho}\|_{\rm{cb}}: x \in G\}. 
\end{equation*}
\end{proposition}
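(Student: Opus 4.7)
The plan is to invoke the Stinespring-type characterisation of completely positive Schur multipliers in the discrete setting, Corollary \ref{19021202}. This furnishes a non-degenerate $\ast$-representation $r: B \to \mathcal{O}(Y)$ weakly contained in $\rho$ and $V \in \mathfrak{L}_{\infty}(G, \mathcal{O}(X, Y))$ with
\begin{equation*}
\rho_D(\Phi((x,y; a))) = V(x)^{\ast} r(a) V(y) \quad ((x,y; a) \in D_{x,y}).
\end{equation*}
Specialising to $y = x$, we get $\Phi_x^{\rho}(\rho(a)) = V(x)^{\ast} r(a) V(x)$ for $a \in B_e$, a Stinespring form that immediately yields complete positivity of $\Phi_x^{\rho}$ on $\rho(B_e)$. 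Moreover, if $\{e_i\}$ is an approximate unit of $B_e$ with $\|e_i\| \leq 1$, the non-degeneracy of $r|_{B_e}$ gives $r(e_i) \to I_Y$ strongly, hence Corollary \ref{19041501} supplies
\begin{equation*}
\|\Phi_x^{\rho}\|_{\rm{cb}} = \lim_i \|V(x)^{\ast} r(e_i) V(x)\| = \|V(x)\|^2.
\end{equation*}

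To prove $\sup_x \|\Phi_x^{\rho}\|_{\rm{cb}} \leq \|\Phi\|_{\mathfrak{S}}$, I exploit the discreteness of $G$. Fix $x$, let $J_x: X \to \mathfrak{L}_2(G, X)$, $\eta \mapsto \eta \chi_{\{x\}}$, be the canonical isometric embedding and $P_x := J_x^{\ast}: \xi \mapsto \xi(x)$. For any $(a_{ij}) \in M_n(B_e)$, the kernels $k_{ij} \in \mathfrak{L}_2(\mathfrak{B} \times \mathfrak{B})$ supported only at $(x,x)$ with value $(x,x; a_{ij})$ satisfy $T_{k_{ij}}^{\rho} = J_x \rho(a_{ij}) P_x$ and $T_{\Phi \cdot k_{ij}}^{\rho} = J_x \Phi_x^{\rho}(\rho(a_{ij})) P_x$. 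Since the block-diagonal amplifications $\mathbf{J}_x = {\rm{diag}}(J_x, \ldots, J_x)$ and $\mathbf{P}_x = \mathbf{J}_x^{\ast}$ satisfy $\mathbf{P}_x \mathbf{J}_x = \id_{X^n}$, we obtain the norm identities $\|(T_{k_{ij}}^{\rho})\| = \|(\rho(a_{ij}))\|$ and $\|(T_{\Phi \cdot k_{ij}}^{\rho})\| = \|(\Phi_x^{\rho}(\rho(a_{ij})))\|$; applying $S_{\rho, \Phi}^{(n)}$ to the first kernel matrix and comparing norms produces the desired complete bound.

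For the reverse inequality, the same factorisation yields the operator identity $S_{\rho, \Phi}(T_k^{\rho}) = M_V^{\ast} T_k^{r} M_V$ on $\mathfrak{E}(\rho, \mathfrak{B})$, where $M_V$ denotes left multiplication by $V$ on $\mathfrak{L}_2(G, X)$; since $\Xi_{\rho, r}: T_k^{\rho} \mapsto T_k^{r}$ is a $\ast$-homomorphism (Lemma \ref{19011401}) and $\|M_V\| = \|V\|_{\infty}$, matrix amplification gives $\|\Phi\|_{\mathfrak{S}} \leq \|V\|_{\infty}^2 = \sup_x \|V(x)\|^2 = \sup_x \|\Phi_x^{\rho}\|_{\rm{cb}}$, finishing the argument. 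The main technical hurdle is pinning down the identification $\|\Phi_x^{\rho}\|_{\rm{cb}} = \|V(x)\|^2$, which relies on the non-degeneracy of $r|_{B_e}$ together with the non-unital approximate-unit norm formula of Corollary \ref{19041501}; once this equality is available, both inequalities follow directly from manipulating the Stinespring factorisation.
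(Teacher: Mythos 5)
Your argument is correct, and it reaches the equality by a genuinely different route than the paper. The paper first builds a global approximate unit of $\mathcal{O}_c(\mathfrak{L}_2(G)) \otimes \rho_{\rm{int}}(C^{\ast}(\mathfrak{B}))$ out of diagonal kernels $k_{(U,i)}$ indexed by finite subsets $U \subset G$ and an approximate unit $\{a_i\}$ of $B_e$, checks that this net lies in $\mathfrak{E}(\rho,\mathfrak{B})$ with norm at most $1$, and then applies the non-unital norm formula of Corollary \ref{hfdiusahufs} to $S_{\rho,\Phi}$ itself to get $\|\Phi\|_{\mathfrak{S}} = \lim_{(U,i)}\|S_{\rho,\Phi}(T^{\rho}_{k_{(U,i)}})\|$; since $T^{\rho}_{\Phi\cdot k_{(U,i)}}$ is block-diagonal, this limit is $\sup_x\|\Phi_x^{\rho}\|_{\rm{cb}}$ after a second, fiberwise application of the same formula. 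You instead sandwich $\|\Phi\|_{\mathfrak{S}}$: the lower bound comes from compressing to the diagonal via the isometries $J_x$, and the upper bound from the factorisation $S_{\rho,\Phi} = M_V^{\ast}\,\Xi_{\rho,r}(\cdot)\,M_V$ together with the fiberwise identity $\|\Phi_x^{\rho}\|_{\rm{cb}} = \|V(x)\|^2$, which you extract from Corollary \ref{19041501} and the strong convergence $r(e_i) \to I_Y$. Both proofs lean on Corollary \ref{19021202} for complete positivity and on the non-unital cp norm formula, but you apply the latter only fiberwise, avoiding the construction of the doubly-indexed global approximate unit; in exchange you must justify that $r|_{B_e}$ is non-degenerate (standard for non-degenerate Fell-bundle representations, but worth a line). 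Your route also yields more: the exact identities $\|\Phi_x^{\rho}\|_{\rm{cb}} = \|V(x)\|^2$ and $\|\Phi\|_{\mathfrak{S}} = \|V\|_{\infty}^2$, and a lower bound $\sup_x\|\Phi_x^{\rho}\|_{\rm{cb}} \leq \|\Phi\|_{\mathfrak{S}}$ that is valid for merely completely bounded multipliers, whereas the paper's limit argument is tied to complete positivity throughout.
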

\begin{proof}
That each $\Phi^{\rho}_x$ is completely positive is by Corollary \ref{19021202}.

Let $\{a_i\}_{i \in I}$ be an approximate unit of the $C^{\ast}$-algebra $B_e$ with $\|a_i\| \leq 1$ $(i \in I)$. Let $\mathcal{A}$ be the collection of all finite subsets of $G$.  We define the order on $\mathcal{A} \times I$ by
\begin{equation*}
(U,i) \leq (V,j) \Leftrightarrow U \subset V \ {\rm{and}} \ i \leq j \ \ \ \ ((U,i), (V,j) \in \mathcal{A} \times I).
\end{equation*}
Then let define $a_{U,i}=a_i$ for all $(U,i) \in \mathcal{A} \times I$, it is clear that $\{a_{U,i}\}_{(U,i) \in \mathcal{A} \times I}$ is an approximate unit of $B_e$ since it is a subnet of $\{a_i\}_{i \in I}$.  By \cite[VIII.5.11]{MR936629} and \cite[VIII.16.3]{MR936629}, we have that $\|a_{U,i}b-b\| \to 0$ for all $b \in C^{\ast}(\mathfrak{B})$. 

Let define $k_{U,i} \in \mathfrak{L}(\mathfrak{B} \times \mathfrak{B})$ by
\begin{equation*}\begin{split}
&k_{(U,i)}(x,x)=(x,x; a_{U,i}) \ \ \ \ (x \in U);
\\&  k_{(U,i)}(y,z)=0 \ \ \ \ {\rm{otherwise}},
\end{split}\end{equation*}
thus if $(U,i) \to \infty$ we have
\begin{equation}\begin{split}\label{cbnhifuyeruis}
{\rm{lim}}_{(U,i) \to \infty}T^{\rho_{{\rm{int}}}}_{k_{U,i}} T_k^{\rho}=T_k^{\rho_{\rm{int}}} \ \ \ \ (k \in \mathfrak{L}(\mathfrak{B} \times \mathfrak{B})).
\end{split}\end{equation}
Since $\{T_k^{\rho_{{\rm{int}}}}: k \in \mathfrak{L}(G \times G, C^{\ast}(\mathfrak{B}))\}$ is dense in $\mathcal{O}_c(\mathfrak{L}_2(G)) \otimes \rho_{{\rm{int}}}(C^{\ast}(\mathfrak{B}))$, (\ref{cbnhifuyeruis}) implies that $\{T^{\rho}_{k_{U,i}}\}$ is an approximate unit of $\mathcal{O}_c(\mathfrak{L}_2(G)) \otimes \rho_{{\rm{int}}}(C^{\ast}(\mathfrak{B}))$ with norm not greater than 1.

 On the other hand, for each fixed $x \in G$, since $\rho|B_e$ is non-degenerate $\ast$-representation of $B_e$, we conclude that $\|\rho_D(k_{U,i}(x,x))\| \leq 1$ and that the net 
 \begin{equation*}
 \{\rho_D(k_{U,i}(x,x))\}_{(U,i) \in \mathcal{A} \times I}
\end{equation*} 
is approximate unit of the concrete $C^{\ast}$-algebra $\rho_D(D_{x,x})=\rho(B_e) \subset \mathcal{O}(X)$ . By Corollary \ref{hfdiusahufs} we have
\begin{equation*}\begin{split}
\|\Phi\|_{\mathfrak{S}}&={\rm{lim}}_{(U,i) \to \infty} \|S_{\rho,\Phi}(T^{\rho}_{k_{(U,i)}})\|
\\&={\rm{sup}} \{\|\Phi^{\rho}_x\|: x \in G\}.
\end{split}\end{equation*}
\end{proof}

For multiplier $\Psi: B \to B$ of $\mathfrak{B}$ we define $\Psi^{\rho}_x: \rho(B_x) \to \rho(B_x)$ by $\Psi^{\rho}_x(\rho(a_x))=\rho(\Psi(a_x))$ $(a_x \in B_x)$ for all $x \in G$.

\begin{corollary}\label{19021301}
If $\Psi: B \to B$ is completely positive Herz-Schur multiplier of $\mathfrak{B}$ and $\rho$ weakly contain $\lambda_{\mathfrak{B}}$, then we have
\begin{equation*}
\|\Psi\|_{\mathfrak{H.S}}=\|\Psi^{\rho}_e\|_{\rm{cb}}.
\end{equation*}
\end{corollary}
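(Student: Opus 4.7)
The plan is to chain together the transference result relating Herz--Schur multipliers with Schur multipliers, the formula for the Schur multiplier norm coming from Proposition \ref{19012901}, and the observation that along the ``diagonal'' of $\mathfrak{B}\times\mathfrak{B}$ all fibres coincide with $B_e$.

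First I would invoke Theorem \ref{19012501}: since $\Psi$ is a completely positive Herz--Schur multiplier and $\rho$ weakly contains $\lambda_{\mathfrak{B}}$, the induced multiplier $N(\Psi)\colon D\to D$ of $\mathfrak{B}\times\mathfrak{B}$ defined by (\ref{cvjdfukfwuriue}) is a completely positive Schur $(\rho,\mathfrak{B})$-multiplier, and the theorem already gives the norm identity
\begin{equation*}
\|\Psi\|_{\mathfrak{H.S}}=\|N(\Psi)\|_{\mathfrak{S}}.
\end{equation*}
Next I would apply Proposition \ref{19012901} to $N(\Psi)$ to obtain
\begin{equation*}
\|N(\Psi)\|_{\mathfrak{S}}=\sup\bigl\{\|N(\Psi)^{\rho}_x\|_{\rm{cb}}:x\in G\bigr\}.
\end{equation*}

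The final step is to identify each $N(\Psi)^{\rho}_x$ with $\Psi^{\rho}_e$. For every $x\in G$ the fibre $D_{x,x}$ equals $B_{xx^{-1}}=B_e$, and under $\rho_D$ the element $(x,x;a)$ is sent to $\rho(a)$. Unwinding the two definitions, for $a\in B_e$ one has
\begin{equation*}
N(\Psi)^{\rho}_x(\rho(a))=\rho_D\bigl(N(\Psi)((x,x;a))\bigr)=\rho_D((x,x;\Psi(a)))=\rho(\Psi(a))=\Psi^{\rho}_e(\rho(a)),
\end{equation*}
so the two maps $N(\Psi)^{\rho}_x$ and $\Psi^{\rho}_e$ coincide on $\rho(B_e)$ for every $x\in G$. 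In particular the supremum above collapses to $\|\Psi^{\rho}_e\|_{\rm{cb}}$, giving $\|\Psi\|_{\mathfrak{H.S}}=\|\Psi^{\rho}_e\|_{\rm{cb}}$.

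I do not anticipate a serious obstacle: once Theorem \ref{19012501} and Proposition \ref{19012901} are in hand, the whole argument is a matter of matching the definitions of $N(\Psi)$, $\rho_D$ and $\Psi^{\rho}_e$. The only point that needs care is that the identification $N(\Psi)^{\rho}_x=\Psi^{\rho}_e$ must use the fact that $\rho\vert_{B_e}$ is faithful (so that $\rho(a)\mapsto\rho(\Psi(a))$ is well-defined on $\rho(B_e)$), which is built into the standing hypothesis on $\rho$ from Section~3.
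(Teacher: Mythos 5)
Your proposal is correct and is exactly the argument the paper intends: the corollary is stated without proof as an immediate consequence of Theorem \ref{19012501} (which gives $\|\Psi\|_{\mathfrak{H.S}}=\|N(\Psi)\|_{\mathfrak{S}}$) and Proposition \ref{19012901} (which gives the supremum formula), combined with the observation that $N(\Psi)^{\rho}_x=\Psi^{\rho}_e$ on every diagonal fibre. Your unwinding of the definitions of $N(\Psi)$, $\rho_D$ and $\Psi^{\rho}_e$ fills in precisely the step the paper leaves implicit.
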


\begin{definition}\label{vnnfduorfjlgf}
We call Fell bundle $\mathfrak{B}$ nuclear if there exists a net $\{\Psi\}_{i \in I}$ of completely positive Herz-Schur multipliers of $\mathfrak{B}$ such that

i. $\|\Psi_e^{\rho}\|_{\rm{cb}} \leq 1 $ for all $i \in I$;

ii. Each $(\Psi_i)_x^{\rho}$ has finite dimensional range $(x \in G; i \in I)$; 

iii.$\|(\Psi_i)_{x}^{\rho}(\rho(a_x))-\rho(a_x)\| \to 0$ provided $i \to \infty$ for all $s \in G$ and $a_x \in B_x$.
\end{definition}

If $A$ is nuclear $C^{\ast}$-algebra, and $\{\phi_i\}_{i \in I}$ is a net of completely positive contractive maps on $A$ such that the range of each $\phi_i$ is finite and $\|\phi_i(a)-a\| \to 0$ for all $a \in A$ provided $i \to \infty$, we call $\{\phi_i\}_{i \in I}$ is an approximation net of $A$.

\bigskip

Recall that $C_{\rm{Red}}^{\ast}(\mathfrak{B})$ is $^{\ast}$-isomorphic to $(\rho \otimes \lambda)_{\rm{int}}(C^{\ast}(\mathfrak{B}))$. In the following we choose $^{\ast}$-representation $\rho: B \to \mathcal{O}(X(\rho))$ to be such that $\rho$ is weakly equivalent to $\lambda_{\mathfrak{B}}$. By \cite[Proposition 19.3]{MR1891686}, there is a conditional expectation 
\begin{equation*}
\mathcal{E}: \rho_{\rm{int}}(C^{\ast}(\mathfrak{B})) \otimes_{\rm{min}} C_r^{\ast}(G) \to \rho_{\rm{int}}(C^{\ast}(\mathfrak{B}))
\end{equation*}
satisfies 
\begin{equation*}
\mathcal{E}(\sum_{x \in G} \rho_{\rm{int}}(a) \otimes \lambda_x))=\rho_{\rm{int}}(a) \ \ \ \ (a \in C^{\ast}(\mathfrak{B})).
\end{equation*}
We have
\begin{equation*}
\mathcal{E}(a  (1_{\mathcal{O}(X(\rho))} \otimes \lambda_y))=\mathcal{E}((1_{\mathcal{O}(X(\rho))} \otimes \lambda_y) a)
\end{equation*}
for $a \in \rho_{\rm{int}}(C^{\ast}(\mathfrak{B})) \otimes_{\rm{min}} C_r^{\ast}(G), \ x,y \in G$.

\begin{remark}\label{uryugndfkfus}
Take a note that $C_{\rm{Red}}^{\ast}(\mathfrak{B})=(\rho \otimes \lambda)_{\rm{int}}(C^{\ast}(\mathfrak{B}))$ is a $C^{\ast}$-subalgebra of $\rho_{\rm{int}}(C^{\ast}(\mathfrak{B})) \otimes_{\rm{min}} C_r^{\ast}(G)$, for we have
\begin{equation*}
\sum_{x \in G} \rho(f(x)) \otimes \lambda_x \in \rho_{\rm{int}}(C^{\ast}(\mathfrak{B})) \otimes_{\rm{min}} C_r^{\ast}(G) \ \ \ \ (f \in \mathfrak{L}(\mathfrak{B})).
\end{equation*}
\end{remark}

Let $F: C_{\rm{Red}}^{\ast}(\mathfrak{B}) \to  C_{\rm{Red}}^{\ast}(\mathfrak{B})$ be a completely positive map, then we could regard $F$ as a completely positive map on $(\rho \otimes \lambda)_{\rm{int}}(C^{\ast}(\mathfrak{B}))$.  Let $(V,r)$ be a representation of $F$, where $r: \rho_{\rm{int}}(C^{\ast}(\mathfrak{B})) \to X(r)$, $V \in \mathcal{O}(\mathfrak{L}_2(G,X(\rho)), X(r))$ (notice that $\rho$ weakly contains $r$). Since $\rho|_{B_x}$ is faithful for each $x \in G$, we identify $B_x$ with $\rho(B_x)$ in the following. Since it is easy to verify that $\mathcal{E}(F(\rho(a_x) \otimes \lambda_x) (1_{\mathcal{O}(X(\rho))} \otimes \lambda_x)^{\ast})  \in B_x$, we can define the multiplier $h_F: B \to B$ by
\begin{equation*}
h_F(a_x)=\mathcal{E}(F(\rho(a_x) \otimes \lambda_x) (1_{\mathcal{O}(X(\rho))} \otimes \lambda_x)^{\ast}) \ \ \ \ (a_x \in B_x).
\end{equation*}

\begin{proposition}\label{19021402}
$h_F$ is completely positive Herz-Schur multiplier such that $\|h_F\|_{\mathfrak{H}.\mathfrak{S}} \leq \|F\|_{\rm{cb}}$.
\end{proposition}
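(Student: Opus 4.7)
The plan is to verify that $h_F$ is a completely positive Herz-Schur multiplier by exhibiting the structure required by Theorem \ref{19012501} and Corollary \ref{19021202}. Specifically, I would produce a $\ast$-representation $r' : B \to \mathcal{O}(K)$ weakly contained in $\rho$ and a uniformly bounded $W : G \to \mathcal{O}(X(\rho), K)$ such that
\begin{equation*}
\rho(h_F(a)) = W(s)^{\ast} r'(a) W(t) \qquad (s,t \in G,\ a \in B_{st^{-1}}),
\end{equation*}
so that, by Corollary \ref{19021202}, the multiplier $N(h_F)$ becomes a completely positive Schur $(\rho, \mathfrak{B})$-multiplier (with the same $W$ on both sides, matching the $V=W$ case there); then Theorem \ref{19012501} upgrades $h_F$ itself to a completely positive Herz-Schur multiplier with $\|h_F\|_{\mathfrak{H.S}} = \|N(h_F)\|_{\mathfrak{S}} \leq \|W\|_{\infty}^{2}$.

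The starting point is a Stinespring dilation of $F$: extend $F$ via Arveson's theorem to a completely positive map on the full minimal tensor product $M := \rho_{\rm{int}}(C^{\ast}(\mathfrak{B})) \otimes_{\rm{min}} C_r^{\ast}(G)$ (recall Remark \ref{uryugndfkfus}), and apply the non-unital Stinespring theorem of Corollary \ref{19041501} to obtain a non-degenerate $\ast$-representation $\pi : M \to \mathcal{O}(K)$ and an operator $V : \mathfrak{L}_2(G, X(\rho)) \to K$ with $F(T) = V^{\ast} \pi(T) V$ and $\|V\|^{2} = \|F\|_{\rm{cb}}$. Set $r(b) := \pi(b \otimes 1)$ for $b \in \rho_{\rm{int}}(C^{\ast}(\mathfrak{B}))$ and $U_s := \pi(1 \otimes \lambda_s)$ for $s \in G$; then $r$ is a $\ast$-representation commuting with the unitary representation $U$ of $G$, and $r \circ \rho_{\rm{int}}$ is the integrated form of a $\ast$-representation $r' : B \to \mathcal{O}(K)$ of $\mathfrak{B}$ that is weakly contained in $\rho$.

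The crucial step is a Toeplitz-type identity: because $F(\rho(a_x) \otimes \lambda_x)$ lies in $C^{\ast}_{\rm{Red}}(\mathfrak{B})$, and on the norm-dense subalgebra of finite sums $T = \sum_y \rho(d_y) \otimes \lambda_y$ (with $d_y \in B_y$) one computes directly that $E_s^{\ast} T E_t = \rho(d_{st^{-1}})$, where $E_s : X(\rho) \to \mathfrak{L}_2(G, X(\rho))$ sends $\xi$ to $\xi \otimes \delta_s$, this identity persists by continuity for all $T \in C^{\ast}_{\rm{Red}}(\mathfrak{B})$, and so
\begin{equation*}
h_F(a_x) = E_s^{\ast} F(\rho(a_x) \otimes \lambda_x) E_t = E_s^{\ast} V^{\ast} r(\rho(a_x)) U_x V E_t
\end{equation*}
for every $s,t \in G$ with $st^{-1} = x$. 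Writing $U_x = U_s U_{t^{-1}}$ and pushing $U_s$ across $r(\rho(a_x))$ by the commutativity of $r$ and $U$, then defining $W(s) := U_{s^{-1}} V E_s$, I obtain the desired factorisation $h_F(a_x) = W(s)^{\ast} r'(a_x) W(t)$. Since $\|W(s)\| \leq \|V\|$ uniformly in $s$, we have $W \in \mathfrak{L}_{\infty}(G, \mathcal{O}(X(\rho), K))$, and the appearance of the same $W$ on both sides triggers the completely positive case of Corollary \ref{19021202}, delivering $\|h_F\|_{\mathfrak{H.S}} \leq \|W\|_{\infty}^{2} \leq \|V\|^{2} = \|F\|_{\rm{cb}}$.

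The main obstacle I expect is the Toeplitz identity $E_s^{\ast} T E_t = \rho(d_{st^{-1}})$ for $T \in C^{\ast}_{\rm{Red}}(\mathfrak{B})$: it is immediate on finite Fourier sums, but the passage to the norm closure requires observing that the Fourier-coefficient maps $T \mapsto \rho(d_y)$ are contractions obtained from the conditional expectation $\mathcal{E}$, so the equality survives norm limits. A secondary technicality is arranging for the Stinespring dilation to be defined on all of $M$ rather than merely on $(\rho \otimes \lambda)_{\rm{int}}(C^{\ast}(\mathfrak{B}))$, which is precisely what Arveson's extension theorem provides.
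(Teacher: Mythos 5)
Your proof is correct, and it reaches the stated bound, but it executes the central step differently from the paper, in a way worth recording. Both arguments begin from a Stinespring-type dilation of $F$; they diverge in how the group part $1\otimes\lambda_x$ and the conditional expectation $\mathcal{E}$ are handled. The paper takes a representation $(V,r)$ of $F$ viewed as a map on $C^{\ast}_{\mathrm{Red}}(\mathfrak{B})$ and writes $\rho_D(N(h_F)((x,y;a)))=\mathcal{E}\bigl((1\otimes\lambda_x)V^{\ast}\,r(a)\,V(1\otimes\lambda_y)^{\ast}\bigr)$, keeping the completely positive map $\mathcal{E}$ wrapped around the product and then invoking Corollary \ref{19021202} rather abruptly --- one still has to convert that expression into the factored form $W(x)^{\ast}r(a)V(y)$ which the corollary actually requires --- and it obtains the norm inequality separately through Corollary \ref{19021301}. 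You instead extend $F$ by Arveson's theorem to all of $\rho_{\mathrm{int}}(C^{\ast}(\mathfrak{B}))\otimes_{\mathrm{min}}C^{\ast}_r(G)$ before dilating, so that the translations become a genuine unitary representation $U_s=\pi(1\otimes\lambda_s)$ commuting with $r$ \emph{inside} the dilation, and you realise $\mathcal{E}$ concretely through the compressions $E_s^{\ast}\cdot E_t$; your Toeplitz identity is exactly the statement that $\mathcal{E}(T(1\otimes\lambda_x)^{\ast})=E_s^{\ast}TE_t$ whenever $st^{-1}=x$, and your continuity argument for it (agreement of two contractions on finite Fourier sums) is sound. This produces the explicit factorisation $h_F(a)=W(s)^{\ast}r'(a)W(t)$ with $W(s)=U_{s^{-1}}VE_s$ and the \emph{same} $W$ on both sides, from which the complete positivity of $S_{\rho,N(h_F)}$ and the bound $\|h_F\|_{\mathfrak{H}.\mathfrak{S}}\leq\|W\|_{\infty}^{2}\leq\|V\|^{2}=\|F\|_{\mathrm{cb}}$ drop out simultaneously via Corollary \ref{19021202} and Theorem \ref{19012501}. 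The price of your route is some extra setup (the Arveson extension, and arranging $\pi$ to be non-degenerate so that $1\otimes\lambda_s$ and $b\otimes 1$ can be evaluated in the multiplier algebra); the gain is that it makes fully explicit precisely the passage the paper leaves implicit and dispenses with the separate appeal to Corollary \ref{19021301} for the norm estimate.
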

\begin{proof}
Since $\rho|_{B_e}$ is faithful, we identify $B_e$ with $\rho(B_e)$. We have
\begin{equation*}\begin{split}
\rho_D(N(h_F)((x,y; a)))=\mathcal{E}(((1_{\mathcal{O}(X(\rho))} \otimes \lambda_x) V^{\ast}) r(a) (V (1_{\mathcal{O}(X(\rho))} \otimes \lambda_y^{\ast})))
\end{split}\end{equation*}
for all $a \in B_{xy^{-1}}$. Since $\mathcal{E}$ is completely positive, and $r$ is weakly contained in $\rho$ and that $\mathcal{E}$ is completely positive $x \mapsto V (1_{\mathcal{O}(X(\rho))} \otimes \lambda_x^{\ast})$ is bounded, then by Theorem \ref{19021202} $N(h_F)$ is Schur $\rho$-multiplier. By Theorem \ref{19012501} $h_F$ is completely positive Herz-Schur multiplier.
 
 Furthermore by \ref{19021301} we have
\begin{equation*}\begin{split}
\|h_F\|_{\mathfrak{H}.\mathfrak{S}}=\|(h_{F})_e\| \leq \|F\|_{\rm{cb}}.
\end{split}\end{equation*}
\end{proof}

\begin{theorem}\label{cnnfdsjfil}
The following are equivalent:

(i) $\mathfrak{B}$ is nuclear;

(ii) $C_{\rm{Red}}^{\ast}(\mathfrak{B})$ is nuclear.

If either of this hold, we have $C^{\ast}(\mathfrak{B})=C^{\ast}_{\rm{Red}}(\mathfrak{B})$.
\end{theorem}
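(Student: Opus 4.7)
The plan is to establish the equivalence via the correspondence between completely positive contractions on $C_{\rm{Red}}^{\ast}(\mathfrak{B})$ and completely positive Herz-Schur multipliers furnished by Proposition \ref{19021402}, with norm control from Corollary \ref{19021301}. The argument parallels the classical result for discrete groups and its semi-direct product bundle analogue in \cite{MR3860401}.

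For the implication $(ii) \Rightarrow (i)$, I would take an approximation net $\{F_i\}$ of completely positive contractions on $C_{\rm{Red}}^{\ast}(\mathfrak{B})$ of finite rank such that $F_i(T) \to T$ in norm for every $T$. Proposition \ref{19021402} produces completely positive Herz-Schur multipliers $h_{F_i}$ with $\|h_{F_i}\|_{\mathfrak{H}.\mathfrak{S}} \leq 1$, and the defining formula $h_{F_i}(a_x) = \mathcal{E}\bigl(F_i(\rho(a_x) \otimes \lambda_x)(1_{\mathcal{O}(X(\rho))} \otimes \lambda_x)^{\ast}\bigr)$ shows that $(h_{F_i})_x^{\rho}$ factors through the finite-dimensional range of $F_i$ and is therefore finite-dimensional. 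Contractivity of $\mathcal{E}$ and of right multiplication by $(1_{\mathcal{O}(X(\rho))} \otimes \lambda_x)^{\ast}$ transfers point-norm convergence of $F_i$ to the condition $\|(h_{F_i})_x^{\rho}(\rho(a_x)) - \rho(a_x)\| \to 0$, yielding the three properties of Definition \ref{vnnfduorfjlgf}.

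For $(i) \Rightarrow (ii)$, I would set $F_i := S_{\Psi_i}$; this is completely positive on $C_{\rm{Red}}^{\ast}(\mathfrak{B})$ with $\|F_i\|_{\rm{cb}} = \|(\Psi_i)_e^{\rho}\|_{\rm{cb}} \leq 1$ by Corollary \ref{19021301}. On the dense subalgebra $(\rho \otimes \lambda)_{\rm{int}}(\mathfrak{L}(\mathfrak{B}))$, and recalling that $G$ is discrete so that sections in $\mathfrak{L}(\mathfrak{B})$ are finitely supported, one has $F_i((\rho \otimes \lambda)_{\rm{int}}(f)) = \sum_{x} \rho(\Psi_i(f(x))) \otimes \lambda_x$, and condition (iii) of Definition \ref{vnnfduorfjlgf} gives norm convergence of this finite sum to $(\rho \otimes \lambda)_{\rm{int}}(f)$; the uniform bound $\|F_i\|_{\rm{cb}} \leq 1$ then promotes this to point-norm convergence on all of $C_{\rm{Red}}^{\ast}(\mathfrak{B})$. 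To produce genuine finite-rank approximants, I would refine the indexing by finite subsets $U \subset G$, truncating $\Psi_i$ to zero outside $U$; each resulting map is then a finite direct sum of maps into the finite-dimensional ranges of $(\Psi_i)_x^{\rho}$, so it factors through a finite-dimensional $C^{\ast}$-algebra, giving the CPAP.

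The main obstacle is precisely this finite-rank factorization: Definition \ref{vnnfduorfjlgf} controls each fibre $(\Psi_i)_x^{\rho}$ individually but places no support restriction on $\Psi_i$ in $G$, so $S_{\Psi_i}$ need not have finite-dimensional range on its own. Handling this cleanly requires a careful double-indexing over both $i$ and finite subsets $U \subset G$, together with a diagonal subnet argument exploiting the uniform bound $\|S_{\Psi_i}\|_{\rm{cb}} \leq 1$ and the pointwise finite-dimensionality. The closing claim $C^{\ast}(\mathfrak{B}) = C_{\rm{Red}}^{\ast}(\mathfrak{B})$ follows from Exel's amenability criterion for Fell bundles: the existence of the CP Herz-Schur multiplier net implements the approximation property for $\mathfrak{B}$ of \cite{MR1891686}, from which faithfulness of $\lambda_{\mathfrak{B}}$ on $C^{\ast}(\mathfrak{B})$ is automatic.
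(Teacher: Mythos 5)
Your proposal follows essentially the same route as the paper: the paper's entire proof of the equivalence consists of invoking Proposition \ref{19021402} together with "the argument of \cite[Theorem 4.3]{MR3860401}", and it cites \cite[Theorem 25.11]{MR3699795} for the final claim $C^{\ast}(\mathfrak{B})=C^{\ast}_{\rm{Red}}(\mathfrak{B})$, which is exactly the correspondence $F \leftrightarrow h_F$ and the Exel amenability criterion you use. Your write-up is in fact more explicit than the paper's, and the finite-rank truncation issue you flag in $(i)\Rightarrow(ii)$ is genuine but is precisely the portion the paper delegates wholesale to the cited argument of \cite{MR3860401}.
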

\begin{proof}
By the aid of Proposition \ref{19021402}, we can prove the equivalence of $(i)$ and $(ii)$ by the argument of \cite[Theorem 4.3]{MR3860401}. If $(ii)$ holds, then by \cite[Theorem 25.11]{MR3699795} $C^{\ast}(\mathfrak{B})=C^{\ast}_{\rm{Red}}(\mathfrak{B})$.
\end{proof}

\begin{corollary}\label{vnhudfuinjgfjlg}
If $\mathfrak{B}$ is a Fell bundle over discrete group $G$ such that either $C^{\ast}(\mathfrak{B})$ or $C_{\rm{Red}}^{\ast}(\mathfrak{B})$ is nuclear $C^{\ast}$-algebra, then for any subgroup $H \subset G$, $C^{\ast}(\mathfrak{B}_H)$ and $C_{\rm{Red}}^{\ast}(\mathfrak{B}_H)$ is nuclear $C^{\ast}$-algebra. Furthermore, $\mathfrak{B}_H$ is amenable.
\end{corollary}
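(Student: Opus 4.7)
The plan is to invoke Theorem \ref{cnnfdsjfil} at both ends of the argument. By that theorem the hypothesis that either $C^{\ast}(\mathfrak{B})$ or $C^{\ast}_{\rm{Red}}(\mathfrak{B})$ is nuclear delivers a net $\{\Psi_i\}_{i \in I}$ of completely positive Herz-Schur multipliers of $\mathfrak{B}$ satisfying conditions (i)--(iii) of Definition \ref{vnnfduorfjlgf}; and to conclude it suffices to produce an analogous net for the sub-bundle $\mathfrak{B}_H$, since a second application of Theorem \ref{cnnfdsjfil} to $\mathfrak{B}_H$ will then yield both the nuclearity of $C^{\ast}_{\rm{Red}}(\mathfrak{B}_H)$ and the equality $C^{\ast}(\mathfrak{B}_H)=C^{\ast}_{\rm{Red}}(\mathfrak{B}_H)$, which is exactly the amenability of $\mathfrak{B}_H$.

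The natural candidate is the net of fibrewise restrictions $\Psi_i^H:=\Psi_i|_{B_H}\colon B_H\to B_H$. Continuity and the uniform fibrewise norm bound transfer immediately from $\Psi_i$ to $\Psi_i^H$, so each $\Psi_i^H$ is a multiplier of $\mathfrak{B}_H$ in the sense of Definition \ref{chsduiiusrfcnh}; and once we fix a $\ast$-representation $\rho'$ of $\mathfrak{B}_H$ weakly containing $\lambda_{\mathfrak{B}_H}$ with $\rho'|_{B_e}$ faithful, the finite-dimensional range condition and the pointwise approximation $\|(\Psi_i^H)^{\rho'}_x(\rho'(a_x))-\rho'(a_x)\|\to 0$ for $x\in H$, $a_x\in B_x$, follow at once from the corresponding statements for $\Psi_i$. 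The only nontrivial point is that each $\Psi_i^H$ is a completely positive Herz-Schur multiplier of $\mathfrak{B}_H$ with $\|(\Psi_i^H)^{\rho'}_e\|_{\rm{cb}}\leq 1$.

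For that, my plan is to exploit the Schur characterisation. By Theorem \ref{19012501} combined with Corollary \ref{19021202}, the completely positive Herz-Schur multiplier $\Psi_i$ is represented by a non-degenerate $\ast$-representation $r_i\colon B\to\mathcal{O}(Y_i)$ weakly contained in $\rho$ and a function $V_i\in\mathfrak{L}_{\infty}(G,\mathcal{O}(X,Y_i))$ with
\[
\rho_D\bigl(N(\Psi_i)((x,y;a))\bigr)=V_i(x)^{\ast}r_i(a)V_i(y),\qquad x,y\in G,\ a\in B_{xy^{-1}}.
\]
Restricting $r_i$ to $B_H$ and $V_i$ to $H$ preserves this identity on the fibres of $\mathfrak{B}_H\times\mathfrak{B}_H$. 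Choosing $\rho'$ on $\mathfrak{B}_H$ large enough to simultaneously weakly contain $\lambda_{\mathfrak{B}_H}$ and each $r_i|_{B_H}$ (for instance the direct sum of $\rho|_{B_H}$ with a representation whose integrated form is faithful on $C^{\ast}_{\rm{Red}}(\mathfrak{B}_H)$), the implication $(ii)\Rightarrow(i)$ of Corollary \ref{19021202} applied inside $\mathfrak{B}_H$ gives that $N(\Psi_i^H)$ is a completely positive Schur $(\mathfrak{B}_H,\rho')$-multiplier, and Theorem \ref{19012501} then upgrades this to $\Psi_i^H$ being a completely positive Herz-Schur multiplier of $\mathfrak{B}_H$; the norm estimate $\|(\Psi_i^H)^{\rho'}_e\|_{\rm{cb}}\leq 1$ is supplied by Corollary \ref{19021301}. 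The step I expect to be the main obstacle is precisely this choice of $\rho'$ together with the verification that the restricted data $(r_i|_{B_H},V_i|_H)$ really do witness the Schur structure with respect to $\rho'$; once that weak-containment bookkeeping is in place, the remainder of the argument is a routine transfer along the inclusion $H\hookrightarrow G$ followed by the closing invocation of Theorem \ref{cnnfdsjfil} to $\mathfrak{B}_H$.
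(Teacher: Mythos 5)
Your proposal follows the paper's own argument exactly: reduce to the reduced algebra, use Theorem \ref{cnnfdsjfil} to extract the approximating net $\{\Psi_i\}$, restrict fibrewise to $\mathfrak{B}_H$, verify conditions (i)--(iii) of Definition \ref{vnnfduorfjlgf} (with Corollary \ref{19021301} supplying the norm bound), and close with Theorem \ref{cnnfdsjfil} applied to $\mathfrak{B}_H$, which also yields $C^{\ast}(\mathfrak{B}_H)=C^{\ast}_{\rm{Red}}(\mathfrak{B}_H)$ and hence amenability. The only difference is that you spell out, via the Schur characterisation of Corollary \ref{19021202} and Theorem \ref{19012501}, the step the paper dismisses as ``easy to see'' (that each restriction is a completely positive Herz-Schur multiplier of $\mathfrak{B}_H$), which is a useful elaboration rather than a different route.
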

\begin{proof}
Notice that if $C^{\ast}(\mathfrak{B})$ is nuclear, then $C_{\rm{Red}}^{\ast}(\mathfrak{B})$ is nuclear because the quotient $C^{\ast}$-subalgebra of a nuclear $C^{\ast}$-algebra is nuclear. 

If $C_{\rm{Red}}^{\ast}(\mathfrak{B})$ is nuclear, then by Theorem \ref{cnnfdsjfil} we have a net $\{\Psi_i\}_{i \in I}$ of Herz-Shur multipliers satisfying $(i)-(iii)$ of Defnition \ref{vnnfduorfjlgf}. It is easy to see that each $\Psi_i|_{\mathfrak{B}_H}$ is completely positive Herz-Schur multiplier of $\mathfrak{B}_H$. Furthermore, since $\{\Psi_i\}_{i \in I}$ satisfies $(i)-(iii)$ of Definition \ref{vnnfduorfjlgf},  $\{\Psi_i|_{\mathfrak{B}_H}\}$ satisfies $(ii)-(iii)$ of Definition \ref{vnnfduorfjlgf}, and by Corollary \ref{19021301} $\{\Psi_i|_{\mathfrak{B}_H}\}$ satisfies $(i)$ of Definition \ref{vnnfduorfjlgf}, now by Theorem \ref {cnnfdsjfil} we conclude that $C_{\rm{Red}}^{\ast}(\mathfrak{B}_H)$ is nuclear. By Theorem \ref {cnnfdsjfil} again, $C_{\rm{Red}}^{\ast}(\mathfrak{B}_H)=C^{\ast}(\mathfrak{B}_H)$, thus $\mathfrak{B}_H$ is amenable. 
\end{proof}

\bigskip

Weijiao He

Queen's University Belfast

email: whe02@qub.ac.uk.

\bibliographystyle{plain}

\bibliography{referencelist}

\end{document}